\theoremstyle{plain}
\newtheorem{thm}{Theorem}[section]
\newtheorem{lem}{Lemma}[section]
\newtheorem{cor}{Corollary}[section]
\newtheorem{prop}{Proposition}[section]
\theoremstyle{definition}
\newtheorem{defn}{Definition}[section]
\theoremstyle{remark}
\newtheorem{rem}{\textit{Remark}}[section]
\numberwithin{equation}{section}
\numberwithin{equation}{section}
\DeclareMathOperator{\supp}{\mathrm{supp}}
\DeclareMathOperator{\sech}{\mathrm{sech}}
\newcommand\underrel[3][]{\mathrel{\mathop{#3}\limits_{%
			\ifx c#1\relax\mathclap{#2}\else#2\fi}}}
\title[KP]{Long time asymptotics of large data in the Kadomtsev-Petviashvili models}
\author[A. M\'endez]{Argenis J. Mendez$^{*}$}
\address{Center for mathematical modelling, Universidad de Chile, Santiago de Chile.}
\email{amendez@dim.uchile.cl}
\thanks{$^{*}$ This work was 
	partially supported by CMM Conicyt Proyecto Basal AFB170001.}
\author[C. Mu\~noz]{Claudio Mu\~noz$^{\lor}$}
\address{CNRS and Departamento de Ingenier\'{\i}a Matem\'atica and Centro
de Modelamiento Matem\'atico (AFB170001 and UMI 2807 CNRS), Universidad de Chile, Casilla
170 Correo 3, Santiago, Chile.}
\email{cmunoz@dim.uchile.cl}
\thanks{$^{\lor}$ C.M.'s work was funded in part by Chilean research grants FONDECYT 1191412, project France-Chile
ECOS-Sud C18E06, MathAmSud EEQUADD II and CMM ANID PIA AFB170001.}
\author[F. Poblete]{Felipe Poblete$^{\ocirc}$}
\address{Instituto de Ciencias F\'isicas y Matem\'aticas, Facultad de Ciencias, Universidad Austral de Chile, Valdivia, Chile.}
\email{felipe.poblete@uach.cl}
\thanks{$^{\ocirc}$ F.P.'s work is partially supported by ANID projects FONDECYT/Iniciaci\'on 11181263 and FONDECYT/Regular 1170466}
\author[J. C. Pozo]{Juan C. Pozo$^{\land}$}
\address{Departamento de Matem\'aticas, Facultad de Ciencias, Universidad de Chile.}
\email{jpozo@uchile.cl}
\thanks{$^{\land}$ J.C.P.'s work was partially supported by Chilean research grant FONDECYT 1181084.}
\subjclass{Primary: 35Q53. Secondary: 35Q05}
\keywords{KP equations, KPI, KPII, asymptotics, decay, virial estimate, large data}	
\date{\today}
\begin{document}

\begin{abstract}
We consider the Kadomtsev-Petviashvili equations posed on $\mathbb{R}^2$. For both equations, we provide sequential in time asymptotic descriptions of solutions, of arbitrarily large data, inside regions not containing lumps or line solitons, and under minimal regularity assumptions. The proof involves the introduction of two new virial identities adapted to the KP dynamics, showing decay in large regions of space, especially in the KP-I case, where no monotonicity property was previously known. Our results do not require the use of the integrability of KP and are adaptable to well-posed perturbations of KP. 
\end{abstract}

\maketitle

\section{Introduction and main results}

\subsection{Setting} Consider the \emph{Kadomtsev-Petviashvili  equations} (KP) posed in $\mathbb R^2$,
\begin{equation}\label{KP:Eq}
\partial_t u+\partial_{x}^{3}u+\kappa\partial_{x}^{-1}\partial_{y}^{2}u +u\partial_x u=0,
\end{equation}
where $u=u({\bf x},t)\in\mathbb{R}$, for $t\in\mathbb{R}$ and ${\bf x}=(x,y)\in\mathbb{R}^2$. The nonlocal operator $\partial_{x}^{-1}f$ is formally defined in the literature as
\begin{equation*}
(\partial_{x}^{-1}f)(x,y):=\int_{-\infty}^{x}f(s,y)\,\mathrm{d}s,
\end{equation*}
and $\kappa\in\{-1,1\}$. The KP equations were first introduced by Kadomtsev and Peviashvili in 1970 \cite{KadomtsevPetviashvili1970} for modeling long and weakly nonlinear waves propagating essentially along the $x$ direction, with a small dependence in the $y$ variable. It is a universal integrable two-dimensional generalizations of the well-known Korteweg–de Vries (KdV) equation, since many other integrable systems can be obtained as reductions \cite{KS}. The nonlocal term $\partial_{x}^{-1}\partial_{y}^{2}$ arises from weak transverse effects, with the sign of $\kappa$ allowing to take into account the surface tension. More precisely, when $\kappa=-1$ KP models capillary gravity waves, in the presence of \emph{strong} surface tension. In such a case KP is known as the KP-I equation. When $\kappa=1$, the equation \eqref{KP:Eq} is known as the KP-II model and it allows to study capillary gravity waves in the presence of \emph{small} surface tension. A rigorous derivation of both models from the symmetric $abcd$ Boussinesq system was obtained in \cite{Lannes1,Lannes2,LS}. We refer the reader to the monographs by Klein and Saut \cite{KS3,KS} for a complete mathematical and physical description of the KP model.

\medskip

The nonlocal term makes KP models hard from the mathematical point of view.  Despite their apparent similarity, KP-I and KP-II  differ significantly with respect to their underlying mathematical structure and the behavior of their solutions. For instance, from the point of view of well-posedness theory KP-II is much better understood than KP-I. Indeed, since the foundational work of Bourgain \cite{Bourgain1993} in 1993, one knows that KP-II is globally well-posed on $L^2(\mathbb{R}^2)$ (see also Ukai \cite{Ukai} and I\'orio-Nunes \cite{IN} for early results, and \cite{Takaoka-Tzvetkov-2001,Isaza-Mejia-2001,Hadac-Herr-Koch-2009} for results improving Bourgain's results). This result is proved via the contraction principle in $X^{s,b}$ spaces and the conservation of the $L^2$-norm
\begin{equation}\label{mass}
M[u](t):= \frac12\int_{\mathbb R^2} u^2(x,y,t)\,\mathrm{d}x\,\mathrm{d}y = \hbox{const.}
\end{equation}
Note that this conservation holds for both KP-I and KP-II. Meanwhile, the KP-I global theory took years to be solved. We emphasize the foundational global well-posedness work by Molinet, Saut and Tzvetkov \cite{Molinet-Saut-Tzvetkov-2002,Molinet-Saut-Tzvetkov-2002_b}, and the improved global well-posedness by C.~E. Kenig \cite{Kenig-2004}, which will be used in this paper several times (see also \cite{CKS}). A key breakthrough was obtained in 2008 by Ionescu, Kenig and Tataru \cite{Ionescu-Kenig-Tataru-2008} (see also \cite{Ionescu-Kenig-2007}), who proved that KP-I is globally well-posed in the natural energy space of the equation 
\begin{equation*}\label{E1}
E^1(\mathbb R^2) := \left\{ u \in L^2 (\mathbb R^2) ~ : ~ 
\|u \|_{L^2}+\|\partial_xu\|_{L^2} +\|\partial_x^{-1}\partial_yu \|_{L^2} <+\infty \right\},
\end{equation*}
in the sense that the flow map extends continuously from Schwartz data to $E^1(\mathbb R^2)$, see \cite{Ionescu-Kenig-Tataru-2008} for further details (it was known from \cite{Molinet-Saut-Tzvetkov-2002_b} that KP-I behaves badly with respect to perturbative methods). This space arises naturally from the conservation of energy ($\kappa=1$ KP-II, $\kappa=-1$ KP-I)
\begin{equation}\label{energy}
E[u](t):= \int_{\mathbb R^2} \left(\frac12 (\partial_x u)^2 -\frac12\kappa  (\partial_x^{-1}\partial_y u)^2 -\frac13 u^3 \right)(x,y,t)\,\mathrm{d}x\,\mathrm{d}y = \hbox{const.}
\end{equation}
It is worth to mention that the space $E^1(\mathbb R^2)$ will be essential for the validity of the functional tools that we shall use below. Additionally, very important in this paper will be the momentum conservation law, valid for solutions in $E^1(\mathbb R^2)$:
\begin{equation}\label{momentum}
P[u](t):= \frac12\int_{\mathbb R^2} \left( u\partial_x^{-1}\partial_y u \right) (x,y,t)\,\mathrm{d}x\,\mathrm{d}y = \hbox{const.}
\end{equation}
Also, for the purposes of this paper, we shall also need the globall well-posedness result for KP-I established by C.~E. Kenig \cite{Kenig-2004}, in the so-called second energy space
\begin{equation}\label{E2}
\begin{aligned}
E^2(\mathbb R^2) := &~{}  \left\{ u \in L^2 (\mathbb R^2)  :  \|u\|_{E^2}:= \|u \|_{L^2} +\|\partial_x^{-1}\partial_y u\|_{L^2} +\|\partial_x^2 u\|_{L^2} +\|\partial_x^{-2}\partial_y^2u \|_{L^2} <+\infty\right\}.
\end{aligned}
\end{equation}
This space is motivated by the conservation of the second energy \cite{ZS}
\[
\begin{aligned}
F[u](t):= &~{} \int_{\mathbb R^2} \left(\frac32 (\partial_x^2 u)^2 + 5(\partial_y u)^2 +\frac56 (\partial_x^{-2}\partial_y^2 u)^2 -\frac56 u^2 \partial_x^{-2}\partial_y^2 u \right. \\
&~{} \qquad\qquad  \left. -\frac56 u (\partial_x^{-1}\partial_y u)^2 +\frac54 u^2\partial_x^2 u   + \frac5{24} u^4 \right)(x,y,t)\,\mathrm{d}x\,\mathrm{d}y = \hbox{const.,}
\end{aligned}
\]
a property that has been rigorously proved in \cite{Molinet-Saut-Tzvetkov-2002}. This second energy is deeply related to the integrability of KP. From \cite{Kenig-2004}, we also know that $\|\partial_y u\|_{L^2} \lesssim\|u\|_{E^2}$. Finally, the Cauchy problem for the fractional version of KP was addressed in \cite{LPS}, see also references therein for more details on the low dispersion problem for KP. 

\medskip

Once a suitable well-posedness theory is available, one may wonder about the long time behavior of globally defined solutions. Except by some particular cases (see below the case of lumps and line solitons, their orbital stability, and the case of small data scattering solutions), no results about large data behavior in KP models starting from Cauchy data are available \cite{KS}.

\subsection{New results} In this paper we provide the first arbitrarily large data, long time asymptotic description of global finite energy solutions to the KP equations \eqref{KP:Eq}, with the minimal data assumptions required to run the \emph{global} well-posedness theory with uniform in time bounds. Basically, we only assume data in $L^2(\mathbb R^2)$ in the KP-II case, and data in $E^1(\mathbb R^2)$ and $E^2(\mathbb R^2)$ in the KP-I case. In particular, for both KP-I and KP-II we describe via a sequence of times the final dynamical behavior of \emph{any global solution} in regions of the plane growing unbounded in time (namely, containing any compact region in $\mathbb R^2$) and not containing the region where \emph{the lumps are present}. Recall that the lump, the soliton-like solution to KP-I with algebraic decay in space, does not decay in time. It was very recently proved to be orbitally stable in the space $E^1(\mathbb R^2)$ in another breakthrough by Liu and Wei \cite{LiuWei}. Consequently, and in view of the results that we will describe below, the asymptotic stability of this object seems a fundamental open problem in the field. 

\smallskip

Our results read as follows. Let $t\gg 1$. Let $\Omega_1(t)$ denote the following rectangular box 
\begin{equation}\label{Omega}
\Omega_1(t)= \left\{ (x,y)\in\mathbb R^2 ~: ~  |x-\ell_1 t^{m_1}| \leq t^{b}, \; |y-\ell_2 t^{m_2}|\leq t^{br}\right\},
\end{equation}
with $\ell_1,\ell_2\in\mathbb R$,
\begin{equation}\label{indices}
\begin{aligned}
&\frac53<r<3, \quad 0<b <\frac2{3+r}, \\
& 0\leq m_1<1-\frac{b}2(r+1), \quad \hbox{and}\quad  0\leq m_2 < 1-\frac{b}2(3-r).
\end{aligned}
\end{equation}
Let also $\Omega_2(t)$ be the union of cylinders 
\begin{equation}\label{Omega0}
\Omega_2(t):= \left\{ (x,y)\in\mathbb R^2 ~: ~  |x|\sim t^{p}\log^{1+\epsilon }t \quad  \hbox{ or } \quad  |y|\sim t^{p}\log^{1+\epsilon }t \right\},
\end{equation}
for any fixed $p\geq 1$ and $\epsilon>0.$ See Fig. \ref{fig:1} for details.

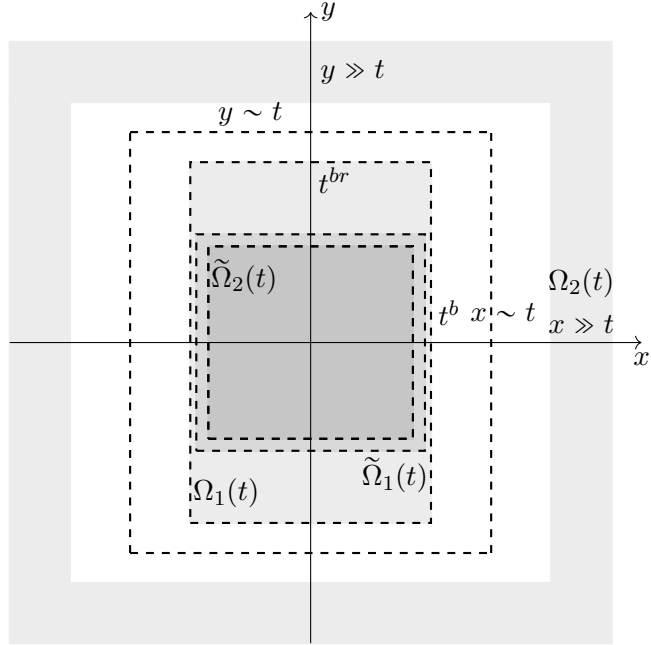
\begin{figure}[h!]
\begin{center}
\begin{tikzpicture}[scale=0.8]
\filldraw[thick, color=lightgray!30] (0,-1) -- (4,-1) -- (4,5) -- (0,5) -- (0,-1);
\draw[thick, dashed] (0,-1) -- (4,-1) -- (4,5) -- (0,5) -- (0,-1);
\draw[thick,dashed] (5,-1.5) -- (5,5.5);
\draw[thick,dashed] (-1,5.5)--(5,5.5);
\draw[thick,dashed] (-1,-1.5)--(-1,5.5);
\draw[thick,dashed] (-1,-1.5)--(5,-1.5);
\filldraw[thick, color=lightgray!60] (0.1,0.2) -- (3.9,0.2) -- (3.9,3.8) -- (0.1,3.8) -- (0.1,0.2);
\draw[thick, dashed] (0.1,0.2) -- (3.9,0.2) -- (3.9,3.8) -- (0.1,3.8) -- (0.1,0.2);
\filldraw[thick, color=lightgray!90] (0.3,0.4) -- (3.7,0.4) -- (3.7,3.6) -- (0.3,3.6) -- (0.3,0.4);
\draw[thick, dashed] (0.3,0.4) -- (3.7,0.4) -- (3.7,3.6) -- (0.3,3.6) -- (0.3,0.4);
\filldraw[thick, color=lightgray!30] (6,-2) -- (6,6) -- (7,6) -- (7,-2) -- (6,-2);
\filldraw[thick, color=lightgray!30] (-3,-2) -- (-3,6) -- (-2,6) -- (-2,-2) -- (-3,-2);
\filldraw[thick, color=lightgray!30] (7,6) -- (7,7) -- (-3,7) -- (-3,6) -- (7,6);
\filldraw[thick, color=lightgray!30] (-3,-2) -- (-3,-3) -- (7,-3) -- (7,-2) -- (-3,-2);
\draw[thick, dashed] (0.3,0.4) -- (3.7,0.4) -- (3.7,3.6) -- (0.3,3.6) -- (0.3,0.4);
\draw[->] (-3,2) -- (7.5,2) node[below] {$x$};
\draw[->] (2,-3) -- (2,7.5) node[right] {$y$};
\node at (6.5,2.3){$x\gg t$};
\node at (2.7,6.5){$y\gg t$};
\node at (5.2,2.5){$x\sim t$};
\node at (1,5.8){$y\sim t$};
\node at (2.4,4.7){$t^{br}$};
\node at (4.3,2.5){$t^b$};
\node at (0.6,-0.5){$\Omega_1(t)$};
\node at (6.5,3){$\Omega_2(t)$};
\node at (3.4,-0.2){$\widetilde\Omega_1(t)$};
\node at (0.9,3.1){$\widetilde\Omega_2(t)$};
\end{tikzpicture}
\end{center}
\caption{\small Schematic figure depicting the sets $\Omega_1(t)$, $\Omega_2(t)$, $\widetilde \Omega_1(t)$ and $\widetilde \Omega_2(t)$, \emph{in the centered case} $l_1=l_2=0$, as defined in \eqref{Omega}, \eqref{Omega0}, \eqref{Omega1} and \eqref{Omega2}, respectively. Recall that $\frac53<r<3$, $0< b<\frac{2}{3+r}$ and $0< br<\frac{2r}{3+r}$. The largest value of $b$ is $\sim\frac{3}{7}$, and $br\sim \frac57$, obtained by $r\sim \frac53$. Most of the white area outside the regions $|x|\sim t$ and $|y|\sim t$ \emph{can be suitably covered} by using shifts $\rho_1(t),\rho_2(t)$, in the non-centered case.}\label{fig:1}
\end{figure}

\begin{thm}\label{KP_decay}
Assume initial data $u_0$ in $L^2(\mathbb R^2)$ in the KP-II case, and $u_0$ in the energy space $E^1(\mathbb R^2)$ for KP-I. Then the corresponding solution $u$ satisfies
\begin{equation}\label{L2}
 \liminf_{t\to \infty} \int_{\Omega_1(t)} u^2(x,y,t)\,\mathrm{d}x\mathrm{d}y=  0,
\end{equation}
and
\begin{equation}\label{decayfar}
 \lim_{t\to \infty} \int_{\Omega_2(t)} u^2(x,y,t)\,\mathrm{d}x\mathrm{d}y=  0.
\end{equation}
\end{thm}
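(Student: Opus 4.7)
The plan is to base the proof on two virial identities tailored to the KP dynamics. Multiplying \eqref{KP:Eq} by $2\varphi(x,y,t)u$ for a smooth weight $\varphi$ and integrating by parts in $(x,y)$, using in particular the nonlocal identity
\[
\int f\, u\, \partial_x^{-1}\partial_y^2 u \,\mathrm{d}x\,\mathrm{d}y = -\int \partial_y f\cdot u\,\partial_x^{-1}\partial_y u\,\mathrm{d}x\,\mathrm{d}y + \tfrac12\int \partial_x f\cdot (\partial_x^{-1}\partial_y u)^2\,\mathrm{d}x\,\mathrm{d}y,
\]
one obtains the master identity
\begin{align*}
\frac{d}{dt}\int \varphi u^2 \,\mathrm{d}x\,\mathrm{d}y &= \int (\partial_t \varphi + \partial_x^3\varphi)\, u^2 \,\mathrm{d}x\,\mathrm{d}y - 3 \int \partial_x\varphi\, (\partial_x u)^2 \,\mathrm{d}x\,\mathrm{d}y + \tfrac{2}{3}\int \partial_x\varphi\, u^3 \,\mathrm{d}x\,\mathrm{d}y\\
&\quad +2\kappa \int \partial_y\varphi\, u\, \partial_x^{-1}\partial_y u \,\mathrm{d}x\,\mathrm{d}y - \kappa \int \partial_x\varphi\, (\partial_x^{-1}\partial_y u)^2 \,\mathrm{d}x\,\mathrm{d}y.
\end{align*}
All computations are first performed on smooth regular solutions provided by \cite{Bourgain1993,Kenig-2004,Ionescu-Kenig-Tataru-2008} and then extended by density, using the conservation laws \eqref{mass}, \eqref{energy}, \eqref{momentum} and, in the KP-I case, the uniform $E^2$ bound recalled above.

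\textbf{First region $\Omega_1(t)$.} I would test against a positive anisotropic bump weight of product form
\[
\varphi_1(x,y,t):= \zeta\!\left(\tfrac{x-\ell_1 t^{m_1}}{t^{b}}\right)\zeta\!\left(\tfrac{y-\ell_2 t^{m_2}}{t^{br}}\right),
\]
with $\zeta\in C_c^\infty(\mathbb{R};[0,1])$ even, equal to $1$ on $[-1,1]$ and supported in $[-2,2]$. Each $\partial_t$, $\partial_x$ or $\partial_y$ acting on $\varphi_1$ produces an inverse power of $t^{b}$ or $t^{br}$ (modulated by the shift speeds $m_1 t^{m_1-1}$, $m_2 t^{m_2-1}$), and is supported in an annular region of area $\sim t^{b(r+1)}$. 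The admissible ranges in \eqref{indices} are precisely those which, when combined with the a priori bounds on $\|u\|_{L^2}$, $\|\partial_x u\|_{L^2}$ and $\|\partial_x^{-1}\partial_y u\|_{L^2}$ (the last two in the KP-I case only) and with an interpolation estimate of Gagliardo--Nirenberg type for the cubic remainder $\int \partial_x\varphi_1\, u^3$, make every error term in the master identity controllable by an $L^1_t$ quantity plus a multiple of $t^{-\sigma}\int_{\Omega_1(t)} u^2$ for some $\sigma\le 1$. Integrating in $t\in[1,T]$, dividing by the conserved mass and letting $T\to\infty$ produces $\int_1^\infty \frac{1}{t^{\sigma}}\int_{\Omega_1(t)} u^2\,\mathrm{d}x\,\mathrm{d}y\,\mathrm{d}t<\infty$ with $\sigma\le 1$, which forces \eqref{L2}.

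\textbf{Second region $\Omega_2(t)$.} I would use instead the far-field weight
\[
\varphi_2(x,y,t):=\eta\!\left(\tfrac{|x|}{t^p\log^{1+\epsilon}t}\right)+\eta\!\left(\tfrac{|y|}{t^p\log^{1+\epsilon}t}\right),
\]
where $\eta\in C^\infty(\mathbb{R})$ is supported away from $0$ and localized around $1$. Because $p\ge 1$, each spatial differentiation of $\varphi_2$ gains an extra factor of order $t^{-p}\log^{-1-\epsilon}t \le t^{-1}\log^{-1-\epsilon}t$, so the same reading of the master identity yields the stronger bound $\int_1^\infty \frac{1}{t\log^{1+\epsilon}t}\int_{\Omega_2(t)}u^2\,\mathrm{d}x\,\mathrm{d}y\,\mathrm{d}t<\infty$; since $\frac{1}{t\log^{1+\epsilon}t}$ is integrable at infinity, a standard monotonicity-in-$t$ argument upgrades this to the full limit \eqref{decayfar}.

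\textbf{Main obstacle.} The genuine difficulty is the KP-I case $\kappa=-1$: the nonlocal quadratic term $-\kappa\int \partial_x\varphi\,(\partial_x^{-1}\partial_y u)^2=+\int\partial_x\varphi\,(\partial_x^{-1}\partial_y u)^2$ has the \emph{wrong} sign against the dispersive contribution $-3\int\partial_x\varphi(\partial_x u)^2$, and no signed monotonicity is available from a weight depending only on $x$. I expect this to be handled by exploiting the anisotropic product structure of $\varphi_1$: the cross term $2\kappa\int\partial_y\varphi_1\,u\,\partial_x^{-1}\partial_y u$ is absorbed by Cauchy--Schwarz against the uniform $E^1$ and $E^2$ bounds from \cite{Ionescu-Kenig-Tataru-2008,Kenig-2004}, while the restrictions $5/3<r<3$ and $b<2/(3+r)$ in \eqref{indices} encode exactly the balance needed so that the transverse errors of order $t^{-br}$ and the longitudinal ones of order $t^{-b}$ are simultaneously integrable in time against the conserved mass and momentum, closing the estimate and yielding the required decay in both KP models.
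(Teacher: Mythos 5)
Your master identity is correct, but the way you propose to extract decay from it does not work, and it misses the central device of the paper. For the region $\Omega_1(t)$ the paper does \emph{not} use a quadratic virial $\int \varphi_1 u^2$: it uses the functional $\mathcal I(t)=\frac{1}{\eta_3(t)}\int u\,\psi(\tilde x/\lambda_5)\phi(\tilde x/\lambda_5^q)\phi(\tilde y/\lambda_6)$, which is \emph{linear} in $u$ and has $\psi\sim\tanh$ monotone in $x$. The whole point is that the quadratic nonlinearity $u\partial_xu$, tested against this linear weight and integrated by parts once, produces the coercive term $\frac{1}{2\eta_3\lambda_5}\int u^2\psi'\phi\phi\gtrsim \frac{c}{t}\int_{\Omega_1(t)}u^2$ on the favorable side of the identity, while the linear terms $\partial_x^3u$ and $\partial_x^{-1}\partial_y^2u$ can be integrated by parts \emph{entirely onto the weight} and estimated by Cauchy--Schwarz using only $\|u_0\|_{L^2}$ (the nonlocal one via Lemma \ref{Localizacion}, which bounds $\partial_x^{-1}(\psi\phi)$ by $9b\phi$). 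Your quadratic functional with a symmetric bump $\zeta$ produces no signed term proportional to $\int_{\Omega_1(t)}u^2$ whatsoever: $\partial_t\varphi_1$ and $\partial_x\varphi_1$ change sign on the support, and asserting that the error terms are ``controllable by a multiple of $t^{-\sigma}\int_{\Omega_1(t)}u^2$'' and then ``dividing by the conserved mass'' does not yield the space-time bound $\int t^{-\sigma}\int_{\Omega_1(t)}u^2<\infty$ --- there is no coercive quantity on the left to integrate. In addition, for KP-II the theorem assumes only $u_0\in L^2$, so the uniform $E^1$ and $E^2$ bounds you invoke to absorb $2\kappa\int\partial_y\varphi_1\,u\,\partial_x^{-1}\partial_yu$ and $\kappa\int\partial_x\varphi_1(\partial_x^{-1}\partial_yu)^2$ are simply unavailable; the linear functional is precisely what makes the argument close at the $L^2$ level.

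For $\Omega_2(t)$ you are closer in spirit (a localized quadratic mass in the far region, with all derivative terms integrable because $\theta(t)=t^p\log^{1+\epsilon}t$ with $p\ge 1$), but your weight $\eta(|x|/\theta(t))$ is an annular bump, so $\int\partial_t\varphi_2\,u^2$ again carries no sign, and your intermediate bound $\int \frac{1}{t\log^{1+\epsilon}t}\int_{\Omega_2(t)}u^2\,dt<\infty$ is trivially true (since $\int_{\Omega_2}u^2\le\|u_0\|_{L^2}^2$ and $1/(t\log^{1+\epsilon}t)$ is integrable) and therefore carries no information; no ``monotonicity in $t$'' is available to upgrade it. The paper instead uses the monotone translating cutoff $\chi\bigl(\frac{x+\theta(t)}{\theta(t)}\bigr)$ (and its $y$-analogue), whose \emph{time} derivative is signed and yields the genuine bound $\int_{t\gg1}\frac1t\int_{|x|\sim\theta(t)}u^2\,dx\,dy\,dt<\infty$ with all remaining terms $O(1/\theta(t))\in L^1$; the full limit is then obtained by introducing a second bump functional $\mathcal M$ with $|\xi'|\lesssim|\chi'|$, whose derivative is dominated by that coercive quantity plus $L^1$ terms, and integrating between $t$ and a sequence $t_n$. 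You would need to restructure both halves of your argument around these signed terms for the proof to go through.
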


Note that this result holds for both KP-I and KP-II models. This is due to the fact that both models contain a quadratic nonlinearity, and the proof does not depend on the sign of $\kappa$ in \eqref{KP:Eq}, namely \emph{it does not depend on the linear behavior of each KP model}. It is instead a decay result strictly triggered by the nonlinear behavior of KP. In the particular setting $\ell_1(t)=\ell_2(t)=0$ (the centered case), \eqref{L2} reveals that the zone of strong influence of the linear dynamics is outside $\Omega_1(t)$, although under additional regularity assumptions, one may show $L^\infty$ decay in the non centered case, see \cite{CM} for results in that direction in the one dimensional KdV model. The region possibly containing lumps or line solitons $|x|\sim t$ is of different nature, characterized by shifts of order $\sim t$, and must be treated in a separated way, depending on each KP model one considers. However, in the KP-II setting, more can be said: Kenig and Martel \cite{KM} showed strong decay on the right half-plane, in the following form: for any $\beta>0$,
\begin{equation}\label{zerozero}
\lim_{t\to \infty} \int_{x>\beta t} u^2(x,y,t)\,\mathrm{d}x\mathrm{d}y=  0,
\end{equation}
provided the initial data is small enough in $L^1\cap L^2.$ In this sense, \eqref{decayfar} extends \eqref{zerozero} to large data only in $L^2$. The remaining zero limsup property in \eqref{L2} has unfortunately escaped to us; note that such a property would completely clarify the description of the KP dynamics, in the energy space, in such regions of space. Finally, note that our previous results in the one dimensional KdV setting \cite{MMPP} are in some sense sharp, in the sense that, assuming asymptotic linear decay of the nonlinear dynamics, space-time pointwise estimates on the linear profile lead to an agreement with the decay property established by our techniques. 

\medskip

From the proofs, it will become clear that Theorem \ref{KP_decay} is stable under perturbations of the nonlinearity in the form $f(u)= u^2 + o_{u\to 0}(u^2)$, provided the well-posedness theory of the corresponding KP equation is available. This is a delicate subject; for that reason, the treatment of the term $o_{u\to 0}(u^2)$ requires some care.

\medskip

Given the result of Bourgain \cite{Bourgain1993}, \eqref{L2} provides a sequential description of the dynamics in the KP-II case on any compact set of $\mathbb R^2$. This description can be complemented with the previous works by de Bouard and Saut \cite{dBS1} and de Bouard and Martel \cite{dBM}, where nonexistence of KP-II \emph{compact solutions} \emph{uniformly bounded} in $E^1(\mathbb R^2)$ was proved. Although applied to a different problem and setting, some of the elements in the proofs of \cite{dBM} will be adapted to prove Theorem \ref{thm_KPI} below, concerning the particular case of KP-I.  

\medskip

In order to state our second result, we recall the notion of compact solution introduced by de Bouard and Martel \cite{dBM}:

\begin{defn}\label{def_Non}
Let $u=u(x,y,t)$ be an $E^1(\mathbb R^2)$ solution to KP-I. We say that \emph{$u$ is compact as $t\to \infty$} if there are $x(t),y(t): [0,\infty) \to \mathbb R$ such that for all $\varepsilon>0$ there exists $R(\varepsilon)>0$ such that 
\[
\sup_{t\geq 0}\int_{\|(x-x(t),y-y(t))\| \geq R(\varepsilon)} u^2(x,y,t) \,\mathrm{d}x\mathrm{d}y <\varepsilon.
\]
A similar definition can be introduced for $t\to -\infty$.
\end{defn}

Note that we only ask for information about the solution for times $t\geq 0$ (or $t\leq 0$). Clearly the lump \eqref{lump_general} is a compact solution to KP-I satisfying $x(t)\sim t$ and $y(t)\sim t$. In this paper, as a consequence of Theorem \ref{KP_decay}, we extend \cite{dBS} to the KP-I setting and show nonexistence of periodic compact zero-speed solutions to KP-I (a.k.a. breathers): 

\begin{cor}\label{Non}
Assume that $u$ as in Definition \ref{def_Non} is a periodic in time compact solution to KP-I satisfying: for some $C_0>0$ and $t\ge 0$,
\begin{equation}\label{adentro}
|x(t)|\le C_0 |t|^{m_1}, \quad |y(t)|\le C_0 |t|^{m_2},
\end{equation}
with $m_1,m_2$ in \eqref{indices}. Then $u\equiv 0$.
\end{cor}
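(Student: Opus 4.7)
The plan is to argue by contradiction: assume $u\not\equiv 0$ so that $M:=M[u]>0$ by \eqref{mass}, and show that the compact core of $u$ provided by Definition \ref{def_Non} must eventually fit inside the box $\Omega_1(t)$ along a subsequence, which together with \eqref{L2} forces $M=0$.

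By \eqref{adentro}, the ratios $x(t)/t^{m_1}$ and $y(t)/t^{m_2}$ take values in $[-C_0,C_0]$, so Bolzano--Weierstrass yields $t_n\to\infty$ and $\ell_1^*,\ell_2^*\in[-C_0,C_0]$ with $x(t_n)/t_n^{m_1}\to\ell_1^*$ and $y(t_n)/t_n^{m_2}\to\ell_2^*$, hence
\[
|x(t_n)-\ell_1^* t_n^{m_1}|=o(t_n^{m_1}),\qquad |y(t_n)-\ell_2^* t_n^{m_2}|=o(t_n^{m_2}).
\]
I then invoke Theorem \ref{KP_decay} with this $(\ell_1^*,\ell_2^*)$ and with indices $(b,r)$ chosen to satisfy \eqref{indices} \emph{together with} the strict nesting inequalities $m_1<b$ and $m_2<br$. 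By \eqref{L2}, after extracting a further subsequence we may assume $\int_{\Omega_1(t_n)}u^2(t_n)\,\mathrm{d}x\,\mathrm{d}y\to 0$.

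Writing $z(t)=(x(t),y(t))$, Definition \ref{def_Non} applied with $\varepsilon=M/2$ produces $R=R(M/2)>0$ such that $\int_{B_R(z(t))}u^2(t)\,\mathrm{d}x\,\mathrm{d}y>M/2$ uniformly in $t\ge 0$. For $n$ large and any $(x,y)\in B_R(z(t_n))$, the triangle inequality and the nesting $m_1<b$ give
\[
|x-\ell_1^* t_n^{m_1}|\le R+|x(t_n)-\ell_1^* t_n^{m_1}|=R+o(t_n^{m_1})\le t_n^b,
\]
and similarly $|y-\ell_2^* t_n^{m_2}|\le t_n^{br}$ from $m_2<br$; thus $B_R(z(t_n))\subset\Omega_1(t_n)$ eventually. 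Combining,
\[
\frac{M}{2}<\int_{B_R(z(t_n))}u^2(t_n)\,\mathrm{d}x\,\mathrm{d}y\le\int_{\Omega_1(t_n)}u^2(t_n)\,\mathrm{d}x\,\mathrm{d}y\longrightarrow 0,
\]
the sought contradiction, so $u\equiv 0$.

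The delicate step I expect is verifying that $(b,r)$ can be chosen to satisfy \eqref{indices} \emph{and} the nesting inequalities $m_1<b$, $m_2<br$. As $b$ is pushed towards its upper bound $2/(3+r)$ permitted by \eqref{indices}, the bounds $1-\tfrac{b}{2}(r+1)$ and $1-\tfrac{b}{2}(3-r)$ on $m_1,m_2$ approach $2/(3+r)=b$ and $2r/(3+r)=br$ respectively, so in this extremal regime the two sets of constraints coincide: if $(m_1,m_2)$ lies in the interior of \eqref{indices} for $b$ near $2/(3+r)$, then $m_1<b$ and $m_2<br$ hold automatically, which is the regime in which the corollary has content. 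Finally, periodicity of $u$ enters only through the uniform in $n$ lower bound $M[u(t_n)]=M$, which is already provided by \eqref{mass} alone, so the argument should apply equally to any global compact solution satisfying \eqref{adentro}.
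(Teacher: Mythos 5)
Your overall strategy --- trap the compact core of $u$ inside the decay region and contradict conservation of mass --- is the same as the paper's, and your identification of the nesting conditions $m_1<b$, $m_2<br$ as the delicate point (together with the observation that they become compatible with \eqref{indices} as $b$ approaches $2/(3+r)$) is accurate. However, the step ``By \eqref{L2}, after extracting a further subsequence we may assume $\int_{\Omega_1(t_n)}u^2(t_n)\to 0$'' is a genuine gap. Estimate \eqref{L2} is only a $\liminf$: it produces \emph{some} sequence $s_k\to\infty$ along which the localized mass vanishes, and gives no control along the particular sequence $t_n$ you fixed beforehand via Bolzano--Weierstrass. The property you are actually invoking --- that every sequence admits a subsequence along which the localized mass tends to $0$ --- is equivalent to the corresponding $\limsup$ being zero, which is precisely what the paper states it is unable to prove. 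There is also a circularity: the centers $(\ell_1^*,\ell_2^*)$ of the box are extracted from $t_n$, while any sequence realizing the $\liminf$ in \eqref{L2} depends on the choice of $(\ell_1,\ell_2)$ made in advance.

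The paper's proof avoids both issues by working one level below \eqref{L2}, with the time-integrated virial estimate of Proposition \ref{le:dT}: since $\mathcal I$ is bounded and $\mathcal I_{int}\in L^1(\{t\gg1\})$, one gets
$\int_{t\gg1}\frac{1}{t}\int_{\mathbb R^2} u^2\,\phi(\tilde x/\lambda_5)\phi(\tilde y/\lambda_6)\,\mathrm{d}x\,\mathrm{d}y\,\mathrm{d}t<\infty$.
Choosing the shifts so that \eqref{adentro} places the ball $\|(x-x(t),y-y(t))\|\le R$ inside the region where the weight equals $1$ for \emph{all} large $t$ (which is where the restriction on $b$, i.e.\ your nesting $m_1<b$, $m_2<br$, enters), compactness plus conservation of mass give the uniform-in-$t$ lower bound $\int_{\|(x-x(t),y-y(t))\|\le R}u^2(t)\,\mathrm{d}x\,\mathrm{d}y\ge c>0$, and the divergence of $\int \mathrm{d}t/t$ yields the contradiction with no subsequence extraction at all. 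Your argument becomes correct if you replace the appeal to \eqref{L2} by this integrated estimate (equivalently, Theorem \ref{lem:d2L2}) and verify the containment for all large $t$ rather than only along $t_n$. Your closing remark that periodicity is inessential is consistent with Remark \ref{theend}.
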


The periodicity condition may seem too demanding, but actually Corollary \ref{Non} holds under less restrictive assumptions, see Remark \ref{theend}.

\medskip

Standard $L^1\to L^\infty$ estimates of the linear KP flow show decay of order $1/t$, see Saut \cite{Saut}. However, as explained by Hayashi and Naumkin in \cite{HN}, the quadratic nonlinearity in KP is of critical order for scattering methods. From \cite{HNS} and \cite{HN}, one has explicit space/time decay estimates for both \emph{linear} KP models, assuming sufficient decay, smallness and smoothness on the initial datum. In the nonlinear setting, scattering and decay estimates have been extensively studied in KP models during past years, essentially under small data assumptions. The large data case poses serious obstructions to either inverse scattering methods, or linear dispersive techniques. Concerning to PDE methods, Hadac, Herr and Koch \cite{Hadac-Herr-Koch-2009} showed scattering of small solutions for KP-II in the critical space $\dot H^{-1/2,0}(\mathbb R^2)$ (below $L^2$ regularity). Hayashi, Naumkin and Saut proved scattering of small data for \eqref{KP:Eq} with power nonlinearity bigger than 3, in suitable weighted spaces for both KP-I and KP-II \cite{HNS} (see also \cite{Ni}). In that work (see also \cite{Saut} for previous remarks), the parabolic regions
\[
x+ \frac{y^2}{4\kappa t} = \hbox{const.,}
\]
became important as essentially the worst possible decay regions in the small data KP regime. The somehow strange constraint $r>\frac53$ in \eqref{indices} is easily explained by this parabolic heuristic: under the limiting values $x\sim t^b$, $y\sim t^{br}$ and $x \sim \frac{y^2}{t}$, the largest value of $b$ ($\sim \frac2{3+r}$, namely, the largest window in the $x$ variable) is attained when $r\sim \frac53$. See more about this fact in Fig. \ref{fig:2}.

\medskip

Later, Hayashi and Naumkin \cite{HN} improved their previous results to the actual KP model and described the asymptotics of small KP solutions in high regularity weighted Sobolev spaces. Harrop-Griffits, Ifrim and Tataru \cite{HGIT} showed scattering of small data for KP-I with precise asymptotics by using testing with wave packets, generalized vector field methods, weighted spaces and Galilean invariant norms. An improved description of the dynamics in terms of the parabolic and its complement region was obtained in this work. See also Klein and Saut \cite{KS2} for precise numerical simulations describing the KP dynamics. Isaza, Linares and Ponce \cite{ILP} proved propagation of regularity in KP-II. On the other hand, many authors have considered the KP equations via inverse scattering techniques (IST). See e.g. the monograph by Konopelchenko \cite[Ch. 2]{Kono}. In \cite{W}, precise asymptotics are given for KP-II in the case of small data with eight derivatives in $L^1\cap L^2$. In the KP-I case, a similar description is established for the case of small data in the Schwartz class \cite{Sung}. Finally, see \cite{KS,KS3} for a complete description of the state of art via the use of rigorous IST techniques. 

\medskip


\medskip

One may wonder if there is additional decay in the case of KP-I, where the data is in the energy space $E^1(\mathbb R^2)$ and not only in $L^2(\mathbb R^2)$. It turns out that this question seems quite hard, and it is deeply related to the fact that \emph{small KP-I lumps can travel arbitrarily fast}, see Remark \ref{lump_fast} for details. In this paper, we show decay for $\partial_x u$ and $\partial_x^{-1}\partial_y u$ by assuming data in the second energy space $E^2(\mathbb R^2)$ introduced in \eqref{E2}. Consider the regions
\begin{equation}\label{Omega1}
\widetilde \Omega_1(t):= \left\{ (x,y)\in \Omega_1(t) ~: ~  |x-\ell_1t^{m_1}| \leq t^{b(1-\eta_0)}, \; |y-\ell_2 t^{m_2}|\leq t^{b(1-2\eta_0)} \right\},
\end{equation}
and
\begin{equation}\label{Omega2}
\widetilde\Omega_2(t):= \left\{ (x,y)\in \Omega_1(t) ~: ~  |x-\ell_1t^{m_1}| \leq t^{b(1-4\eta_0)}, \; |y-\ell_2t^{m_2}|\leq t^{b(1-3\eta_0)}\right\},
\end{equation}
for any $\eta_0>0$ sufficiently small (but arbitrary). See also Fig. \ref{fig:1}.

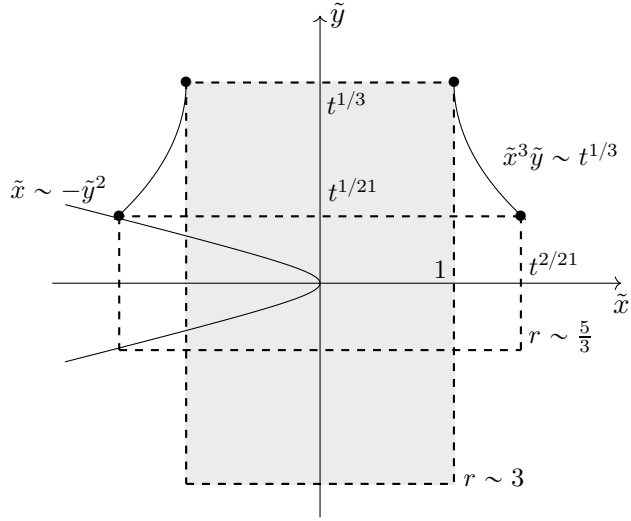
\begin{figure}[h!]
\begin{center}
\begin{tikzpicture}[scale=0.89]
\filldraw[thick, color=lightgray!30] (0,-1) -- (4,-1) -- (4,5) -- (0,5) -- (0,-1);
\draw[thick, dashed] (0,-1) -- (4,-1) -- (4,5) -- (0,5) -- (0,-1);
\draw[thick,dashed] (5,1) -- (5,3);
\draw[thick,dashed] (5,3)--(-1,3);
\draw[thick,dashed] (-1,3)--(-1,1);
\draw[thick,dashed] (-1,1)--(5,1);
\draw[->] (-2,2) -- (6.5,2) node[below] {$\tilde x$};
\draw[->] (2,-1.5) -- (2,6) node[right] {$\tilde y$};
\node at (4,5){$\bullet$};
\node at (5,3){$\bullet$};
\node at (-1,3){$\bullet$};
\node at (0,5){$\bullet$};
\node at (2.4,4.7){\small $t^{1/3}$};
\node at (4.6,-0.9){\small $r\sim 3$};
\node at (5.6,1.2){\small $r\sim \frac53$};
\node at (3.8,2.2){\small $ 1$};
\node at (5.5,2.3){\small $t^{2/21}$};
\node at (2.5,3.4){\small $t^{1/21}$};
\node at (-1.9,3.4){\small $\tilde x \sim - \tilde y^2$};
\draw plot[variable=\t,samples=1000,domain=-78:78] ({3-1*sec(\t)},{2+1/4*tan(\t)});
\draw plot[variable=\t,samples=1000,domain=-19:0] ({4*sec(2*\t)},{5+4.1*tan(1.4*\t)});
\draw plot[variable=\t,samples=1000,domain=-19:0] ({4-4*sec(2*\t)},{5+4.1*tan(1.4*\t)});
\node at (5.6,3.9){\small $\tilde x^3 \tilde y \sim t^{1/3}$};
\end{tikzpicture}
\end{center}
\caption{\small Schematic figure depicting the two limiting rectangles $\Omega_1(t)$ from Theorem \ref{KP_decay} obtained when $r\sim 3$ (shadow region), and $r\sim \frac53$ (white region), in the KP-II case. Here, $\tilde x= \frac{x}{t^{1/3}}$ and $\tilde y:= \frac{y}{t^{2/3}}$ are the standard self-similar variables for KP, see e.g. \cite{Saut}. Additionally, the parabolic region $\tilde x \sim - \tilde y^2$ from \cite{Saut,HNS} is depicted, showing agreement with the limiting value $r\sim \frac53$, which corresponds to the case where (in terms of Theorem \ref{KP_decay}), the largest possible parabolic region is contained. Finally, the first quadrant limit points of the sets $\Omega_1(t)$ (see the bullets on the right), which are of the form $(\tilde x,\tilde y) \sim (t^{\frac{3-r}{3(r+3)}},t^{\frac{2(2r-3)}{3(r+3)}})$, with $\frac53<r<3$, follow the curve $\tilde x^3 \tilde y \sim t^{1/3}$ above depicted. The upper limit $|\tilde y|\sim t^{1/3}$ in the case $r\sim 3$, which corresponds to $|y|\sim t$, is in agreement with the existence of lumps moving in the $y$ direction with speeds $\sim 1$, see \eqref{lump_general}.}\label{fig:2}
\end{figure}

\begin{thm}\label{thm_KPI}
 Assume now initial data $u_0$ in the second energy space $E^2(\mathbb R^2)$ for KP-I. Then the corresponding solution $u$ satisfies
\begin{equation}\label{decay_v}
 \liminf_{t\to \infty} \int_{\widetilde\Omega_1(t)} (\partial_x^{-1}\partial_y u)^2(x,y,t)\,\mathrm{d}x\mathrm{d}y =  0,
\end{equation}
and
\begin{equation}\label{decay_ux}
 \liminf_{t\to \infty} \int_{\widetilde\Omega_2(t)} (\partial_xu)^2(x,y,t)\, \mathrm{d}x\mathrm{d}y=  0.
\end{equation}
\end{thm}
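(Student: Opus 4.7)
The plan is to prove both statements by introducing two new virial functionals at the level of the energy, in the same spirit as the $L^2$-virial used for Theorem \ref{KP_decay}, but now exploiting the extra regularity granted by $u_0\in E^2(\mathbb R^2)$. Through \cite{Kenig-2004} and the conservation of $F[u]$, this assumption yields uniform in time bounds for $\|u\|_{L^2}$, $\|\partial_x u\|_{L^2}$, $\|\partial_y u\|_{L^2}$, $\|\partial_x^{-1}\partial_y u\|_{L^2}$, $\|\partial_x^2 u\|_{L^2}$ and $\|\partial_x^{-2}\partial_y^2 u\|_{L^2}$, precisely the quantities needed to close all the error terms generated below.

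For \eqref{decay_v}, set $v:=\partial_x^{-1}\partial_y u$. Applying $\partial_x^{-1}\partial_y$ to \eqref{KP:Eq} with $\kappa=-1$ and using $u\partial_x u=\tfrac12\partial_x(u^2)$, one obtains
\[
\partial_t v+\partial_x^3 v-\partial_x^{-1}\partial_y^2 v+\tfrac12\partial_y(u^2)=0.
\]
I would consider the virial
\[
\mathcal I_1(t):=\int_{\mathbb R^2}\varphi_1(x,y,t)\,v^2(x,y,t)\,\mathrm{d}x\,\mathrm{d}y,
\]
with $\varphi_1\geq 0$ a smooth weight essentially supported on $\Omega_1(t)$, constructed in complete analogy with the one used for Theorem \ref{KP_decay} but shrunk so that its derivatives are controlled on the smaller window $\widetilde\Omega_1(t)$. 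Computing $\tfrac{d}{dt}\mathcal I_1$ and proceeding exactly as in the $L^2$-virial argument of Theorem \ref{KP_decay}, the drift piece $\int \partial_t\varphi_1\,v^2$ provides a coercive contribution on $\widetilde\Omega_1(t)$, while the dispersive integration by parts in $x$ contributes the weighted quadratic form $3\int \partial_x\varphi_1\,(\partial_x v)^2$ up to the lower-order $\int \partial_x^3\varphi_1\,v^2$. The nonlocal dispersive piece $\int\varphi_1 v\,\partial_x^{-1}\partial_y^2 v$ is treated exactly as in Theorem \ref{KP_decay}, and the nonlinear contribution, rewritten as $\tfrac12\int \partial_y(\varphi_1 v)\,u^2$, is estimated by H\"older combined with the smallness of $|\partial_y\varphi_1|$ on the shrunken box and the uniform control of $u,\partial_y u$ from $E^2$. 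A time integration and a Fubini-type extraction then deliver a sequence $t_n\to\infty$ along which $\int_{\widetilde\Omega_1(t_n)}v^2\to 0$.

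For \eqref{decay_ux}, I would use the energy-density virial
\[
\mathcal I_2(t):=\int_{\mathbb R^2}\varphi_2(x,y,t)\Bigl(\tfrac12(\partial_x u)^2+\tfrac12(\partial_x^{-1}\partial_y u)^2-\tfrac13 u^3\Bigr)\mathrm{d}x\,\mathrm{d}y,
\]
with $\varphi_2$ adapted to the more restricted box $\widetilde\Omega_2(t)$. The key observation is that the integrand is precisely the KP-I energy density of \eqref{energy} (with $\kappa=-1$), so that in the absence of $\varphi_2$ the KP-I flow leaves this functional invariant; consequently, $\tfrac{d}{dt}\mathcal I_2$ is entirely generated by the derivatives of $\varphi_2$, and the dispersive integrations by parts extract the positive weighted form $\tfrac32\int \partial_x\varphi_2\,(\partial_x^2 u)^2$ together with its $\partial_x v$ analogue. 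Residual nonlinear pieces of the form $\int(\partial_\alpha\varphi_2)\,u(\partial_x u)^2$, $\int(\partial_\alpha\varphi_2)\,u^2\partial_x^2 u$ and their nonlocal cousins are bounded by Gagliardo--Nirenberg interpolation combined with the uniform $E^2$ bound, and the resulting time-integrated inequality yields a sequence $t_n\to\infty$ with $\int_{\widetilde\Omega_2(t_n)}(\partial_x u)^2\to 0$; the $u^3$ piece of $\mathcal I_2(t_n)$ is automatically small along the same sequence by Theorem \ref{KP_decay} applied on the larger set $\widetilde\Omega_2\subset\Omega_1$ combined with Gagliardo--Nirenberg.

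The main obstacles are: (i) calibrating the weights $\varphi_1,\varphi_2$ so that their space-time derivatives are compatible with the whole parameter range \eqref{indices} simultaneously and produce error terms that are integrable in time; this is exactly what forces the shrinkage factors $t^{-\eta_0},t^{-2\eta_0},t^{-3\eta_0},t^{-4\eta_0}$ in \eqref{Omega1}--\eqref{Omega2}, and also explains why $\widetilde\Omega_2$ must be strictly smaller than $\widetilde\Omega_1$, since controlling $\partial_x u$ requires one additional derivative of the weight to be absorbed. (ii) The nonlocal anti-derivative $\partial_x^{-1}$: every integration by parts in $y$ generates a term that closes only via the uniform bound $\|\partial_x^{-2}\partial_y^2 u\|_{L^2}\lesssim\|u\|_{E^2}$ from \eqref{E2}. (iii) Compatibility of the admissible drift $(\ell_1t^{m_1},\ell_2t^{m_2})$ with the exclusion of the lump regime $|x|\sim t$ of Remark \ref{lump_fast}, which is guaranteed by $m_i<1$ from \eqref{indices}.
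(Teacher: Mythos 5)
Your proposal diverges from the paper's route in both halves, and in each case the substitute functional fails to produce the coercive quantity you need. For \eqref{decay_v}, the paper does not use a weighted mass of $v$, $\int \varphi_1 v^2$; it uses the \emph{momentum}-type cross functional $\mathcal J(t)=\frac{1}{\eta_2(t)}\int u\,v\,\Psi$ with $\Psi(x,y)=\phi(x)\psi(y)$ odd and increasing in $y$ (Proposition \ref{paso2}). The coercive term $-\frac{1}{2\eta_2\lambda_4}\int v^2\,\partial_y\Psi$, with $\partial_y\Psi=\phi(x)\phi(y)\ge 0$, is generated by the transversal dispersion $\partial_x^{-1}\partial_y^2 u=\partial_y v$ paired against $v\Psi$ and integrated by parts in $y$. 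In your functional this mechanism is absent: the dispersive terms acting on $v^2$ are either sign-indefinite errors or, for a weight odd in $x$, give Kato smoothing for $\partial_x v=\partial_y u$ rather than for $v$ itself. The only coercive contribution you identify is the drift $\int\partial_t\varphi_1\,v^2$, of size $\sim\frac{1}{t\,\eta(t)}\int v^2\Theta$; for this to be non-integrable (so that a $\liminf$ can be extracted against a bounded functional) one needs $\eta(t)\lesssim\log t$, and then the unavoidable error terms of size $\frac{1}{\eta(t)\lambda(t)}\bigl(\|\partial_y u\|_{L^2}^2+\|\partial_x^{-2}\partial_y^2u\|_{L^2}^2\bigr)$ are time-integrable only if $\lambda(t)\gtrsim t$, a window that contains the moving lumps $Q_{c,\beta}$ of \eqref{lump_general}, for which $\int v^2$ over such a region does not decay. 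So the parameter bookkeeping cannot close in the regime \eqref{Omega1}, and this is precisely why the paper resorts to the momentum density, ``using the momentum as a trigger of decay, instead of considering only the mass.''

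For \eqref{decay_ux} there are two further problems. First, an energy-density virial produces, via the integration by parts of $\partial_x^3$, the quadratic form $\tfrac32\int\partial_x\varphi_2\,(\partial_x^2u)^2$ — one derivative \emph{above} the density — so the time-integrated inequality controls $\partial_x^2u$ locally, not $\partial_xu$; the quantity $\int_{\widetilde\Omega_2}(\partial_xu)^2$ you want never appears with a favorable sign. The paper instead uses the \emph{mass} virial $\mathcal K(t)=\frac{1}{\eta_1}\int u^2\Phi$ with $\Phi=\psi(x)\phi(y)$ (Proposition \ref{paso1}), whose Kato term is exactly $-\frac{3}{\eta_1\lambda_1}\int(\partial_xu)^2\phi\phi$; the price is a wrong-sign term $+\frac{C_0}{t\log t}\int(\partial_x^{-1}\partial_yu)^2\phi\phi$ (this is where KP-I loses monotonicity), which is then shown to be in $L^1(\{t\gg1\})$ using the output \eqref{estimate_v_new} of the momentum virial. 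Second, your final step combines smallness of $u^3$ ``along the same sequence by Theorem \ref{KP_decay}'': but Theorem \ref{KP_decay} is a $\liminf$ along \emph{some} sequence, and there is no reason it matches the sequence extracted from your virial. The paper avoids this by proving genuine time-integrability of each local quantity (the three statements in \eqref{las3}, via Proposition \ref{le:dT} and Lemma \ref{cubico}), so that a single sequence works simultaneously for all of them; sequential smallness alone does not suffice to close the argument.
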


\begin{rem}
Theorem \ref{thm_KPI}, together with Theorem \ref{KP_decay}, states that all the members of the energy space norm decay locally in space along sequences of time, provided the initial data is in the subclass $E^2(\mathbb R^2)$. The family of KP-I lumps belong to this class, see Remark \ref{lump_fast}. It is not clear to us whether or not this condition is also necessary. We conjecture that Theorem \ref{thm_KPI} is valid for data in $E^1(\mathbb R^2)$ only.
\end{rem}





We finish with another corollary, obtained from \cite[Lemma 4]{dBM} and Theorems \ref{KP_decay} and \ref{thm_KPI}: 
\begin{cor}
Any solution $u\in E^2(\mathbb R^2)$ to KP-I satisfies
\[
\liminf_{t\to +\infty} \| u(t)\|_{L^p(K)}  =0, \quad 2\leq p\leq 6,
\]
for any compact set $K\subseteq \mathbb R^2$.
\end{cor}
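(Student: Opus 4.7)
The plan is to combine the three liminf decay statements \eqref{L2}, \eqref{decay_v} and \eqref{decay_ux} on a common subsequence of times, and then upgrade this local $L^2$ decay to local $L^p$ decay by means of the anisotropic Gagliardo--Nirenberg inequality for KP-I supplied by \cite[Lemma 4]{dBM}.

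Fix a compact set $K\subseteq \mathbb R^2$, and pick parameters $b,r,\eta_0$ satisfying \eqref{indices} with $\ell_1=\ell_2=m_1=m_2=0$ in \eqref{Omega}, \eqref{Omega1}, \eqref{Omega2}. Then $\widetilde\Omega_2(t)\subseteq\widetilde\Omega_1(t)\subseteq\Omega_1(t)$ are nested rectangles centered at the origin whose sides grow like positive powers of $t$, so $K\subseteq \widetilde\Omega_2(t)$ for every $t\ge T_0(K)$. The proofs of Theorems \ref{KP_decay} and \ref{thm_KPI} will in fact produce time-integrability estimates of the form
\[
\int_{1}^{\infty}\frac{1}{t}\left[\int_{\Omega_1(t)} u^2 + \int_{\widetilde\Omega_1(t)}(\partial_x^{-1}\partial_y u)^2+\int_{\widetilde\Omega_2(t)}(\partial_x u)^2\right]\mathrm{d}x\,\mathrm{d}y\,\mathrm{d}t <+\infty,
\]
and a standard mean-value argument on this single integrable sum yields a common sequence $t_n\to+\infty$ along which the three local $L^2$ quantities in \eqref{L2}, \eqref{decay_v}, \eqref{decay_ux} vanish simultaneously.

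For $p=2$ the claim is already contained in \eqref{L2}. For $p=6$, fix a cut-off $\varphi\in C_c^\infty(\mathbb R^2)$ with $\varphi\equiv 1$ on $K$ and $\mathrm{supp}\,\varphi$ in a fixed small neighborhood of $K$, which is contained in $\widetilde\Omega_2(t_n)$ for all large $n$. The localized anisotropic Gagliardo--Nirenberg inequality of \cite[Lemma 4]{dBM} for the KP-I energy space yields
\[
\|u\|_{L^6(K)}^6 \le \|\varphi u\|_{L^6(\mathbb R^2)}^6 \le C\,\|\varphi u\|_{L^2}^2\,\|\partial_x(\varphi u)\|_{L^2}^2\,\|\partial_x^{-1}\partial_y(\varphi u)\|_{L^2}^2,
\]
and each of the three factors on the right is bounded, up to remainders coming from derivatives falling on $\varphi$, by the corresponding local $L^2$ norm of $u$, $\partial_x u$ and $\partial_x^{-1}\partial_y u$ on a fixed neighborhood of $K$. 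Along $t_n$ each of these local factors tends to zero while the commutator remainders stay uniformly bounded (using $u\in L^\infty_t E^2$, guaranteed by Kenig's global theory and the conservation of $F[u]$), forcing $\|u(t_n)\|_{L^6(K)}\to 0$. For $2<p<6$ we interpolate between the $p=2$ and $p=6$ cases via Hölder's inequality.

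The main obstacle is precisely the nonlocal factor $\partial_x^{-1}\partial_y(\varphi u)$, which does not admit a Leibniz-type localization. This is exactly why the assumption $u_0\in E^2(\mathbb R^2)$, rather than merely $E^1$, is needed in the hypothesis of the corollary: through Kenig's well-posedness and the conservation of the second energy $F[u]$, it supplies uniform-in-time $L^2$ bounds on $\partial_x^{-1}\partial_y u$ and $\partial_y u$, which are precisely the inputs that \cite[Lemma 4]{dBM} requires to absorb the nonlocal commutator and reduce the right-hand side to local quantities controlled by our three liminf decay statements.
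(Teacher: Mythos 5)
Your proposal is correct and follows essentially the same route the paper indicates: the corollary is stated there with no written proof beyond the one-line attribution to \cite[Lemma 4]{dBM} together with Theorems \ref{KP_decay} and \ref{thm_KPI}, i.e.\ exactly the combination of the three local $L^2$ decay statements (extracted on a common sequence from the time-integrated estimates) with the localized anisotropic Gagliardo--Nirenberg inequality that you carry out. The only point to tighten is the treatment of the factor $\partial_x^{-1}\partial_y(\varphi u)$: since its commutator remainders are only claimed bounded rather than vanishing, the conclusion should be drawn from the product vanishing because the $\|\varphi u\|_{L^2}$ and $\|\partial_x(\varphi u)\|_{L^2}$ factors tend to zero while the nonlocal factor stays bounded --- which is precisely the localization issue that \cite[Lemma 4]{dBM} is invoked to handle.
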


\subsection{Lumps and line solitons} KP-I has lump solutions, namely solutions of the form
\[
u(x,y,t)=Q_c(x-ct,y), \quad c>0.
\]
The function $Q_c$ is given as $Q_c(x,y):=cQ(\sqrt{c} x,c y)$, where $Q$ is the fixed profile
\[
Q(x,y)=12\partial_x^2 \log(x^2+y^2+3)=\frac{24(3-x^2+y^2)}{(x^2+y^2+3)^2}.
\]
From this formula one clearly sees that lumps are in $E^2(\mathbb R^2)$ and have zero mean. Lumps were first found by Satsuma and Ablowitz \cite{SatAblo} via an intricate limiting process of complex-valued algebraic solutions to KP-I. Also, multi-lump solutions were found in the same reference. de Bouard and Saut \cite{dBS,dBS1,dBS2} described, via PDE techniques, qualitative properties of lumps solutions which are also ground states. In particular, $Q\in E^1(\mathbb R^2)$ and $Q\in E^2(\mathbb R^2)$, and it decays like $1/(x^2+y^2)$, but it is not positive everywhere. However, whether or not lump solutions are ground states is still unknown.

\medskip

Important advances in the understanding of this problem were obtained by Liu and Wei in the aforementioned work \cite{LiuWei}. By using  linear B\"acklund transformation techniques, they proved the orbital stability of the lump in the space $E^1(\mathbb R^2)$. 

\medskip

\begin{rem}\label{lump_fast}
Lumps can travel quite fast. Indeed, recall the invariances of the KP equations \eqref{KP:Eq} \cite{dBM}:
\begin{enumerate}
\item Shifts: 
\[
u(x,y,t) \mapsto u(x+x_0,y+y_0,t +t_0).
\]
\item Scaling: if $c>0$,
\[
u(x,y,t) \mapsto c u(c^{1/2} x, c y,c^{3/2} t).
\]
\item Galilean invariance: for any $\beta\in\mathbb R$, if $u(x,y,t)$ is solution to KP, then
\[
\begin{cases}
u(x,y,t) \mapsto u(x -\beta^2 t -\beta (y-2\beta t) , y-2\beta t,t), & \hbox{KP-I}\\
u(x,y,t) \mapsto u(x + \beta^2 t + \beta (y-2\beta t) , y-2\beta t,t), & \hbox{KP-II},
\end{cases}
\] 
define new solutions to KP.
\end{enumerate}
Using this, one can construct a moving lump solution, of arbitrary size and speed: for any $\alpha,\beta\in\mathbb R$,
\begin{equation}\label{lump_general}
Q_{c,\beta}(x,y,t):= Q_c (x-ct -\beta^2 t -\beta (y-2\beta t) ,y-2\beta t)
\end{equation}
is a moving lump solution of KP-I with speed $\beta\in\mathbb R$. Moreover, a simple computation reveals that (see \eqref{mass}, \eqref{energy} and \eqref{momentum})
\[
\begin{aligned}
&~{} M[Q_{c,\beta}] =M[Q_c] =c^{1/2} M[Q], \\
&~{}  E[Q_{c,\beta}]= E[Q_c] +\beta^2 M[Q_c] = c^{3/2}E[Q] +\beta^2 c^{1/2}M[Q],  \\
&~{}  P[Q_{c,\beta}] = P[Q_c] -\beta M[Q_c] = cP[Q] -\beta c^{1/2}M[Q] =-\beta c^{1/2}M[Q].
\end{aligned}
\]
The previous computations imply that small lumps (in the energy space) may have arbitrarily large speeds ($\beta\gg1$ and $c\ll 1$ such that $\beta^2 c^{1/2}\ll 1$). Precisely, this property of lumps ensures that \emph{no monotonicity on the right of the solution} may hold in the KP-I case, in the sense of Martel and Merle (note that this is not the case in KP-II, see \eqref{zerozero} and de Bouard-Martel \cite{dBM}). A general solution may contain a small fast lump solution that will appear after some time on the right of the main part of the solution. This simply implies that an arbitrary portion of mass on the right of the plane cannot be almost preserved in time. In that sense, the proof of Theorem \ref{thm_KPI} overcomes this difficulty by adding one more degree of regularity, and using the momentum as a trigger of decay, instead of considering only the mass as in standard monotonicity properties.
\end{rem}

In view of the results by de Bouard-Saut \cite{dBS1}, and de Bouard-Martel \cite{dBM}, it is known that KP-II has no lump structures. However, any KdV soliton becomes an (infinite energy) line-soliton solution of KP. Such structure is stable in the KP-II case, as proved by Mizumachi and Tzvetkov \cite{MT}, and Mizumachi \cite{Mizu1,Mizu2}. In the KP-I case, it is transversally unstable, as proved by Rousset and Tzvetkov \cite{RT1,RT2}. Finally, in order to study the stability of the KdV soliton under the flow of the KP-II equation, Molinet, Saut and Tzvetkov \cite{Molinet-Saut-Tzvetkov-2011} proved global well-posedness in $L^2(\mathbb{R}\times \mathbb{T})$  and $L^2(\mathbb R^2)$ (see also Koch and Tzvetkov \cite{Koch-Tzvetkov-2008}).

\subsection{Idea of proofs} 

The proof of Theorems \ref{KP_decay} and \ref{thm_KPI} is based in two new virial identities, being the first one outlined in the early but fundamental work by Gustavo Ponce and the second author \cite{MuPo1,MuPo2}, and improved step by step to the point of being able to consider data only in the ``energy space'' for the Zakharov-Kuznetsov model, see \cite{MMPP}. Some virials identities are independent of the integrability of the equation, consequently, are valid for plenty of dispersive models, see e.g. \cite{MM1,MM2,MM3,Merle,KMM,Cote:Munoz:Pilod:Simpson:2016} and references therein.

\medskip

The idea goes as follows. For technical reasons, we will need the introduction of six scaling factors $\lambda_j(t)$, three attenuation functions $\eta_k(t)$, and two shift parameters $\rho_l(t)$, plus one completely unrelated shift/scaling factor $\theta(t)$. The key point is to have in mind the following inequalities:
\begin{equation}\label{cadena1}
\lambda_1(t) \ll \lambda_2(t) \ll \lambda_4(t) \ll  \lambda_3(t) \ll \lambda_5(t) \ll \lambda_6(t) ,
\end{equation}
and
\begin{equation}\label{cadena2}
\eta_1(t) \gg \eta_2(t) \gg \eta_3(t).
\end{equation}
(See Subsection \ref{Scalings} for further details.) Here, $a\gg b$ for $a,b>0$ means that for all possible constant $C>0$, one has $a\geq Cb$. We will describe here the proof in the KP-I case, since the KP-II one is in some sense simpler. The starting point is the classical Kato smooting functional related to the mass of the solution. For a 2-variable function $\Phi$ which is essentially of the form $\tanh x \sech^2 y$, we introduce the Kato type functional
 \[
 \mathcal K(t) =\frac{1}{\eta_1(t)}\int_{\mathbb{R}^{2}} u^{2}(x,y,t) \Phi \left(\frac{\tilde x}{\lambda_1(t)},\frac{\tilde y}{\lambda_2(t)}\right)\,\mathrm{d}x\mathrm{d}y,
\]
with $\tilde x= x-\rho_1(t)$ and $\tilde y= y-\rho_2(t)$ as translated variables. For simplicity, one can assume $\tilde x=x$ and $\tilde y =y$, having in mind that the results in Theorems \ref{KP_decay} and \ref{thm_KPI} do hold even in the shifted case, with the necessary modifications.

\medskip

This functional (with different weights associated to the soliton dynamics) has been widely used in the KdV setting by Martel and Merle \cite{MM1,MM2,MM3}, and in the Zakharov-Kuznetsov setting in \cite{Cote:Munoz:Pilod:Simpson:2016,MMPP}, because it presents nice Kato smoothing properties (sometimes also called monotonicity properties). However, in the KP setting \emph{these are not available anymore}. The influence of the transversal term $\partial_x^{-1}\partial_y^2 u$ in the equation becomes evident and poses strong difficulties to the obtention of monotonicity properties a la Martel-Merle. Up to now, no such property was known in the KP-I case, and in the KP-II case it holds under suitable assumptions (see \cite{dBM}). 

\medskip

The first virial estimate, related to $\mathcal K(t)$, reads as follows: it is possible to bound $\partial_x u$ $L^2$ locally in space, provided one has local $L^2$ control on $\partial_x^{-1}\partial_y u$ and local $L^3$ control on $u$. Indeed, for constants $\sigma_0,C_0>0$,
\begin{equation}\label{K_intro}
\begin{aligned}
&~{} \frac{\sigma_0}{ t\log t}\int_{\mathbb{R}^{2}} (\partial_x u)^2 \sech^2 \left(\frac{\tilde x}{\lambda_1(t)} \right)\sech^2 \left(\frac{\tilde y}{\lambda_2(t)}\right)   \, \mathrm{d}x\mathrm{d}y  \\
&~{} \qquad  \leq  -\frac{d\mathcal K(t)}{dt} +  \frac{C_0}{t\log t}\int_{\mathbb{R}^{2}} (\partial_x^{-1}\partial_y u)^2   \sech^2 \left(\frac{\tilde x}{\lambda_1(t)} \right) \sech^2\left(\frac{\tilde y}{\lambda_2(t)}\right)   \, \mathrm{d}x\mathrm{d}y \\
&~{} \qquad \quad  +\frac{C_0}{t\log t} \int_{\mathbb{R}^{2}} |u|^3 \sech^2\left(\frac{\tilde x}{\lambda_1(t)} \right) \sech^2\left(\frac{\tilde y}{\lambda_2(t)}\right) \, \mathrm{d}x\mathrm{d}y+ \mathcal K_{int}(t),
\end{aligned}
\end{equation}
with $ \mathcal K_{int}(t)\in L^1(\{t\gg1\})$, see Proposition \ref{paso1} for further details. This is done with a correct choice of $\lambda_5,\lambda_6$ and $\eta_3$ chosen in order to \emph{maximize the area of decay} as much as possible. Note that the terms on the right hand side are still uncontrolled in the large data setting. Indeed, the term with $\partial_x^{-1}\partial_y u$ appears precisely with the wrong sign because of the lack of monotonicity in KP-I, and it is hard to control even in the small data case.

\medskip

In this paper we overcome this problem by adding one more degree of regularity to the initial data. This is done by working in the second energy space $E^2(\mathbb R^2)$ introduced by C.~E. Kenig  \cite{Kenig-2004}. We define the functional
\begin{equation*}
\mathcal{J}(t)=\frac{1}{\eta_2(t)}\int_{\mathbb{R}^{2}} \left( u \partial_x^{-1}\partial_y u \right)(x,y,t)\, \Psi \left(\frac{\tilde x}{\lambda_{3}(t)},\frac{\tilde y}{\lambda_{4}(t)}
\right)\,\mathrm{d}x\,\mathrm{d}y,
\end{equation*}
valid for solutions in $E^1(\mathbb R^2)$. Contrary to the previous case ($\mathcal{K}(t)$ and $\Phi$), here $\Psi$ is similar to $\sech^2 x\tanh y$. This functional is inspired in the one introduced by de Bouard-Martel \cite{dBM} for the KP-II setting, which was $\int y u \partial_x^{-1}\partial_y u \,\mathrm{d}x\,\mathrm{d}y$. For this functional we prove the following. Let $u\in E^2(\mathbb R^2)$ be a globally defined solution to KP-I. Under appropriate choices of $\lambda_3,\lambda_4$ and $\eta_2$ above, and having in mind the chain of inequalities in \eqref{cadena1}-\eqref{cadena2}, there exist constants $\sigma_1,C_1>0$ such that
\begin{equation}\label{J_intro}
\begin{aligned}
&~{} \frac{\sigma_1}{t\log t}\int_{\mathbb{R}^{2}} (\partial_x^{-1}\partial_y u)^{2}\sech^2\left(\frac{\tilde x}{\lambda_{3}(t)}\right)\sech^2 \left(\frac{\tilde y}{\lambda_{4}(t)}
\right)\mathrm{d}x\mathrm{d}y \\
&~{} \qquad \leq   -\frac{d}{dt}\mathcal J(t)  +  \frac{C_1}{t\log t}\int_{\mathbb{R}^{2}} |u|^{3} \sech^2\left(\frac{\tilde x}{\lambda_{3}(t)} \right) \sech^2 \left(\frac{\tilde y}{\lambda_{4}(t)}\right) \mathrm{d}x\mathrm{d}y\\
&~{} \qquad \quad + \frac{C_1}{t} \int_{\mathbb{R}^{2}}u^2\sech^2\left(\frac{\tilde x}{\lambda_{3}(t)} \right) \sech^2 \left(\frac{\tilde y}{\lambda_{4}(t)}\right)\mathrm{d}x\mathrm{d}y  + \mathcal J_{int}(t),
\end{aligned}
\end{equation}
where $\mathcal J_{int}(t)\in L^1(\{ t\gg1\})$; see Proposition \ref{paso2} for more details. Proving \eqref{J_intro} is not direct, mainly because the equation formally satisfied by $\partial_x^{-1}\partial_y u$ introduces several new highly nonlocal terms in the virial estimate, which are hard to control unless one has better regularity estimates. One of those terms is $\partial_x^{-2} \partial_y^2 u$, which is only well-defined in a proper subspace of the energy space $E^1(\mathbb R^2)$. Here is when the space $E^2(\mathbb R^2)$ in \eqref{E2} enters to the scene: bounded in time solutions  in this space make \eqref{J_intro} possible.

\medskip

The previous virial estimate allows us to bound the term $\partial_x^{-1}\partial_y u$ in \eqref{K_intro} just in terms of local $L^2$ and $L^3$ terms on $u$. Note that in principle, these last terms do not integrate in time. Therefore, to conclude we need suitable integrability bounds on those terms, independent of the size of the data. 

\medskip

Here is when a third virial estimate is needed. Generalizing our previous results for the Zakharov-Kuznetsov equation \cite{MMPP}, we introduce the $L^1$ functional   
\[
\mathcal I(t)=\frac{1}{\eta_3(t)}\int_{\mathbb{R}^{2}}u(x,y,t)\psi\left(\frac{\tilde x}{\lambda_5(t)}\right)\phi\left(\frac{\tilde x}{\lambda_5^q(t)}\right)\phi\left(\frac{\tilde y}{\lambda_6(t)}\right)\,\mathrm{d}x\mathrm{d}y,
\]
with $q>1$, and $\lambda_5,\lambda_6$ and $\eta_3$ well-chosen, always following \eqref{cadena1} and \eqref{cadena2}. The weight $\psi$ behaves as $\tanh$ and $\phi$ as $\sech$. At this moment the quadratic character of KP enters and makes possible to obtain integrability properties for the local $L^2$ norm of $u$. This is a key property to describe the radiative terms in dispersive models (and it is not satisfied by soliton-like solutions, which are nondecaying objects). These $L^1$ type functionals have been considered in different settings, see e.g. \cite{BSS,MM3,Merle}, specially in blow-up or instability of solitons. A primitive version of $\mathcal I(t)$ was introduced by Ponce and the second author in \cite{MuPo1} and \cite{MuPo2} in the one dimensional gKdV setting, but the additional hypothesis $u\in L^\infty_{loc,t} L^1_x$ was always necessary. This extra integrability condition was removed in \cite{MMPP}, where we considered the less involved Zakharov-Kuznetsov (ZK) model. In particular, in that work estimate \eqref{K_intro} (Proposition \ref{paso1}) was direct because of nice Kato smoothing properties present in ZK, but not present in KP. 

\medskip

In principle, $\int_{\mathbb R^2} u$ is not well-defined in $L^2$; this makes very important to consider the weights above introduced. The weight with power $q$ is introduced by technical reasons; the most important contribution is always related to the weight with scaling $\lambda_5(t)$.  For this functional, we prove, regardless one considers KP-I or KPI-II, the following: assume data only in $L^2(\mathbb R^2)$ for KP-II, or data in $E^1(\mathbb R^2)$ in the KP-I case. Then there exists $\sigma_2>0$ and $\varepsilon_0>0$ small enough such that, for any $t\gg 1$, one has the bound
\begin{equation}\label{I_intro}
\begin{split}
\frac{\sigma_2}{t}\int_{\mathbb{R}^{2}} u^{2} \sech^2\left(\frac{\tilde x}{\lambda_5(t)}\right)\sech^2\left(\frac{\tilde y}{\lambda_6(t)}\right)\,\mathrm{d}x\mathrm{d}y &\leq \frac{d\mathcal I }{dt}(t) + \mathcal I _{int}(t),
\end{split}
\end{equation}
provided $q=1+\varepsilon_0$ in $\mathcal I(t)$ \eqref{mI}, and where $\mathcal I _{int}(t)$ are terms that belong to $L^1\left(\{t\gg 1\}\right)$ (see Proposition \ref{le:dT}). Corollary \ref{Non} follows directly from this estimate. A similar estimate was proved in the ZK case in \cite{MMPP}. However, there are some key differences in \eqref{I_intro} in the KP and ZK cases. At first sight,  a standard estimate as in \cite{MMPP} reveals that \eqref{I_intro} should not hold in the KP case because the nonlocal operator $\partial_x^{-1}$ does not send $L^2$ into $L^2$, leading to a critical loss of time decay. Fortunately, we will  benefit of a new decay relation between the nonlocal operator $\partial_x^{-1}$ and well-chosen virial weights $\psi$ and $\phi$. After some work, it will be proved that \eqref{I_intro} is also valid in the KP setting. The price to pay is the more restrictive condition $r>\frac53$ in $\Omega_1(t)$. Recall that $r$ is a parameter related to the $y$ variable, but also reduces the $x$ variable, and $r$ larger means that the size of $\Omega_1(t)$ is smaller in $x$ than in the ZK case. This difference is an intriguing feature obtained from the proof, and we believe that it is not caused by technical reasons, but because of weaker spatial decay properties.

\medskip

Estimates \eqref{J_intro} and \eqref{I_intro} are, as far as we know, new in the KP setting, even in the small data regime. We emphasize that no smallness condition is needed for proving both estimates, only the validity of data in the corresponding energy spaces. Moreover, \eqref{I_intro} allows us to conclude in a direct form that (see Lemma \ref{cubico} and Subsection \ref{finaliza}) every solution to KP-I in $E^2(\mathbb R^2)$ satisfy the mild decay in time properties
\begin{equation}\label{las3}
\begin{aligned}
& ~{} \frac{1}{t\log t}\int_{\mathbb{R}^{2}}|u|^{3} (x,y,t)\sech^2 \left(\frac{\tilde x}{\lambda_{3}(t)} \right) \sech^2 \left(\frac{\tilde y}{\lambda_{4}(t)}\right)\mathrm{d}x\mathrm{d}y \in L^1(\{t\gg 1\}),\\
& ~{} \frac{1}{t\log t}\int_{\mathbb{R}^{2}} (\partial_x^{-1}\partial_y u)^{2}(x,y,t)\sech^2\left(\frac{\tilde x}{\lambda_{3}(t)}\right)\sech^2 \left(\frac{\tilde y}{\lambda_{4}(t)} \right)\mathrm{d}x\mathrm{d}y\in L^1(\{t\gg 1\}),\\
& ~{} \frac{1}{ t\log t}\int_{\mathbb{R}^{2}} (\partial_x u)^2 (x,y,t) \sech^2 \left(\frac{\tilde x}{\lambda_1(t)} \right)\sech^2 \left(\frac{\tilde y}{\lambda_2(t)}\right)   \, \mathrm{d}x\mathrm{d}y \in L^1(\{t\gg 1\}).
\end{aligned}
\end{equation}
In view of \eqref{K_intro}, the last estimate on the gradient can be recast as the first monotonicity property obtained for solutions to KP-I. Unlike the gKdV and ZK setting, it requires a combination of the functionals $\mathcal K(t)$, $\mathcal J(t)$ and $\mathcal I(t)$. This combination has no apparent sign, in concordance with the existence of fast small lumps. Following the ideas in \cite{MuPo2}, the three estimates in \eqref{las3} lead to \eqref{L2} in Theorem \ref{KP_decay} (valid for data only in $L^2$ or $ E^1(\mathbb R^2)$), and combined with Propositions \ref{paso1} and \ref{paso2} (\eqref{K_intro} and \eqref{J_intro}), lead to Theorem \ref{thm_KPI}. 

\medskip

Finally, \eqref{decayfar} is proved using a technique introduced in \cite{MPS}, which follows a different procedure, starting with another virial functional in the spirit of $\mathcal K(t)$, but with a completely different choice of parameters. Indeed, we introduce the functionals
\[
\mathcal L_x(t)= \frac{1}{2}\int_{\mathbb{R}^{2}}u^{2}\chi\left(\frac{x+\theta (t)}{\theta (t)}\right)\,\mathrm{d}x\mathrm{d}y, \quad \mathcal L_y(t)= \frac{1}{2}\int_{\mathbb{R}^{2}}u^{2}\chi\left(\frac{y+\theta (t)}{\theta (t)}\right)\,\mathrm{d}x\mathrm{d}y;
\]
with $|\theta(t)| \gg t$, and for a bump function satisfying $\chi' \leq 0$, and with support in $[-1,0]$. Then we prove (see Proposition \ref{paso4}) that
\[
\frac1t\int_{\mathbb{R}^{2}}u^{2} (x,y,t)\left| \chi'\left(\frac{x+\theta (t)}{\theta (t)}\right) \right|  \,\mathrm{d}x\mathrm{d}y \leq -\frac{d}{dt} \mathcal L_x(t) + \mathcal L_{x,int}(t),
\]
and
\[
\frac1t\int_{\mathbb{R}^{2}}u^{2}(x,y,t) \left| \chi'\left(\frac{y+\theta (t)}{\theta (t)}\right) \right|  \,\mathrm{d}x\mathrm{d}y \leq -\frac{d}{dt} \mathcal L_y(t) + \mathcal L_{y,int}(t),
\]
where both $ \mathcal L_{x,int}(t)$ and $ \mathcal L_{y,int}(t)$ are in $L^1(\{t\gg 1\})$. Note that these estimate overcome the monotonicity problem present in KP-I, since we work in regions where lumps are not present. This is the key of the proof, implying local decay in the regions $|x|\sim \theta(t)$ and $|y|\sim \theta(t)$. These virial identities hold without the use of attenuation functions in $\mathcal L_x(t)$ and $\mathcal L_y(t)$, which implies that the recover the full limit and not only a liminf.

\medskip

We finish this long introduction by mentioning that we believe that the techniques introduced in the proofs above mentioned will be very useful for the proof of asymptotic stability of the KP-I lump \eqref{lump_general} as well. 

\subsection*{Organization of this paper}  This work is organized as follows. In Section \ref{Sect:2} we introduce the necessary preliminary results needed for the proof of main theorems. Section \ref{Sect:6} is devoted to the proof of \eqref{decayfar}. Section \ref{sect:ux} deals with a virial estimate for the derivative $\partial_x u$ of the solution in the KP-I case. Section \ref{Sect:v} is devoted to a new virial estimate for the variable $\partial_x^{-1}\partial_y u$ in the KP-I case. Section \ref{Sect:3} is concerned with the proof of Theorem \ref{KP_decay} of $L^2$-decay in both KP models.  Finally, Section \ref{Final} ends with the proof of Theorem \ref{thm_KPI} and Corollary \ref{Non}.

\subsection*{Acknowledgments} We thank Yvan Martel and Jean-Claude Saut for many useful discussions regarding the KP models along the years, Jean-Claude Saut for making available to us the monograph by Klein and Saut \cite{KS}, and Felipe Linares and Jean-Claude Saut for useful comments on a first version of this paper.

\bigskip

\section{Preliminaries}\label{Sect:2}

This section is devoted to establish or recall a series of relevant results needed for the proofs of Theorems  \ref{decayfar}, \ref{KP_decay} and \ref{thm_KPI}.

\subsection{Interpolation Inequalities}
By $H^{s}_{-1}(\mathbb{R}^{2}), s\in \mathbb{R},$ we denote the classical Sobolev spaces equipped with the norm
\begin{equation}\label{Sobolev}
\|f\|_{H^{s}_{-1}}:=\left\|\left(1+|\xi |\right)^{-1}\langle |\xi|+|\eta|\rangle ^{s}\mathcal{F}(f)(\xi ,\eta)\right\|_{L^{2}_{\xi,\eta}},
\end{equation}
where $\langle x\rangle:=\left(1+|x|^{2}\right)^{1/2}$ and $\mathcal{F}$ denotes the Fourier transform. The space $H^{\infty}_{-1}(\mathbb{R}^{2}),$ is defined as the  intersection of all the $H^{s}_{-1}(\mathbb{R}^{2}).$ For the proof of the following interpolation inequality, see L. Molinet, J.-C. Saut, and  N. Tzvetkov \cite{Molinet-Saut-Tzvetkov-2002}.
\begin{lem}\label{Interpol}
	For $2\leq p\leq 6$ there exist  $c>0$ such that  for every $f\in H^{\infty}_{-1}(\mathbb{R}^{2})$
	\begin{equation}\label{Interpol1}
	\|f\|_{L^{p}}\leq c\|f\|_{L^{2}}^{\frac{6-p}{2p}}\|\partial_{x}f\|_{L^{2}}^{\frac{p-2}{p}}\left\|\partial_{x}^{-1}\partial_{y}f\right\|_{L^{2}}^{\frac{p-2}{2p}}.
	\end{equation}
\end{lem}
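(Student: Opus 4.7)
The plan is to reduce the inequality for general $2\le p\le 6$ to the endpoint case $p=6$,
\[
\|f\|_{L^{6}}\le C\,\|\partial_{x}f\|_{L^{2}}^{2/3}\,\|\partial_{x}^{-1}\partial_{y}f\|_{L^{2}}^{1/3},
\]
via log-convexity of $L^{p}$ norms. Indeed, the exponents $\theta=(6-p)/(2p)$ and $1-\theta=3(p-2)/(2p)$ satisfy $1/p=\theta/2+(1-\theta)/6$, so the classical interpolation bound $\|f\|_{L^{p}}\le\|f\|_{L^{2}}^{\theta}\|f\|_{L^{6}}^{1-\theta}$ applies; plugging in the endpoint bound produces exactly the three exponents $(6-p)/(2p)$, $(p-2)/p$ and $(p-2)/(2p)$ of the statement. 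A check using the anisotropic KP scaling $f(x,y)\mapsto f(\lambda x,\mu y)$ confirms that these are the only exponents compatible with scale invariance in both $\lambda$ and $\mu$.

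For the endpoint estimate, I would first reduce to Schwartz functions whose Fourier transforms vanish in a neighbourhood of $\{\xi=0\}$ (a dense class in $H^{\infty}_{-1}(\mathbb{R}^{2})$ on which $\partial_{x}^{-1}\partial_{y}f$ is unambiguous, smooth, and has genuine decay as $y\to\pm\infty$), and extend by density at the end. The first step is the one-dimensional Gagliardo--Nirenberg inequality applied at every horizontal slice,
\[
\|f(\cdot,y)\|_{L^{6}_{x}}^{6}\le C\,\|f(\cdot,y)\|_{L^{2}_{x}}^{4}\,\|\partial_{x}f(\cdot,y)\|_{L^{2}_{x}}^{2},
\]
which, after integrating in $y$ and extracting the supremum from one factor, yields
\[
\|f\|_{L^{6}}^{6}\le C\,\|f\|_{L^{\infty}_{y}L^{2}_{x}}^{4}\,\|\partial_{x}f\|_{L^{2}}^{2}.
\]
The crux is therefore to bound $\|f\|_{L^{\infty}_{y}L^{2}_{x}}$ in terms of $\|\partial_{x}f\|_{L^{2}}^{1/2}\|\partial_{x}^{-1}\partial_{y}f\|_{L^{2}}^{1/2}$.

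Here the nonlocal quantity $g:=\partial_{x}^{-1}\partial_{y}f$ enters crucially: since $\partial_{x}g=\partial_{y}f$, an integration by parts in $x$ gives
\[
\frac{d}{dy}\int_{\mathbb{R}}f^{2}(x,y)\,dx=2\int_{\mathbb{R}}f\,\partial_{x}g\,dx=-2\int_{\mathbb{R}}\partial_{x}f\cdot g\,dx.
\]
Because the chosen approximants satisfy $\|f(\cdot,y)\|_{L^{2}_{x}}\to 0$ as $y\to-\infty$, integration in $y$ from $-\infty$ followed by Cauchy--Schwarz in both variables yields
\[
\|f(\cdot,y)\|_{L^{2}_{x}}^{2}\le 2\int_{-\infty}^{y}\!\|\partial_{x}f(\cdot,s)\|_{L^{2}_{x}}\|g(\cdot,s)\|_{L^{2}_{x}}\,ds\le 2\,\|\partial_{x}f\|_{L^{2}}\|g\|_{L^{2}},
\]
uniformly in $y$. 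Substituting back produces $\|f\|_{L^{6}}^{6}\le C\,\|\partial_{x}f\|_{L^{2}}^{4}\|g\|_{L^{2}}^{2}$, i.e.\ the endpoint inequality. The main obstacle I anticipate is the rigorous justification of both the integration by parts in $x$ and the vanishing $\|f(\cdot,y)\|_{L^{2}_{x}}\to 0$ for arbitrary $f\in H^{\infty}_{-1}(\mathbb{R}^{2})$; this is precisely what forces the passage through a dense Schwartz-type subclass, followed by approximation. Once the endpoint is established, the log-convexity interpolation described above closes the full range $2\le p\le 6$.
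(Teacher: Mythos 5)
The paper does not prove this lemma at all: it simply cites Molinet--Saut--Tzvetkov \cite{Molinet-Saut-Tzvetkov-2002}, so there is no internal proof to compare against. Your argument is correct and is essentially the standard proof of this anisotropic Gagliardo--Nirenberg inequality found in that reference (and in de Bouard--Saut): reduce to the endpoint $p=6$ by log-convexity of $L^p$ norms, prove the endpoint by slicing in $y$ with the one-dimensional inequality $\|f(\cdot,y)\|_{L^6_x}^6\lesssim\|f(\cdot,y)\|_{L^2_x}^4\|\partial_x f(\cdot,y)\|_{L^2_x}^2$, and control $\sup_y\|f(\cdot,y)\|_{L^2_x}^2$ by $\|\partial_x f\|_{L^2}\|\partial_x^{-1}\partial_y f\|_{L^2}$ via the identity $\frac{d}{dy}\int f^2\,dx=-2\int \partial_x f\,\partial_x^{-1}\partial_y f\,dx$ on a dense class of functions whose Fourier transforms vanish near $\{\xi=0\}$; your exponent bookkeeping and the density reduction are both sound.
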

This estimate pass to the limit and by density holds true in the energy space $E^1(\mathbb R^2)$.

\subsection{Local and global well-posedness} Let us first mention some of the consequences of the global well-posedness of Ionescu-Kenig-Tataru \cite{Ionescu-Kenig-Tataru-2008}. Assume that $u_0\in E^1(\mathbb R^2)$; then the corresponding solution to KP-I satisfies
\begin{equation}\label{bounded1}
\sup_{t\geq 0} \left( \|u(t)\|_{L^2_{xy}} +  \|\partial_x u(t)\|_{L^2_{xy}} +\| \partial_x^{-1} \partial_y u (t)\|_{L^2_{xy}}\right) \leq C,
\end{equation}
for a fixed constant $C=C(\|u_0\|_{E^1})$. Estimate \eqref{bounded1} is a standard consequence of the conservation of mass \eqref{mass} and energy \eqref{energy}, together with \eqref{Interpol1} for $p=3$.

\medskip

More involved is the following boundedness property obtained from C.~E.~Kenig \cite{Kenig-2004}. Recall the space $E^2(\mathbb R^2)$ introduced in \eqref{E2}. We also need the auxiliary space, defined for $s\in\mathbb R$:
\begin{equation*}\label{Ys}
Y_s:= \left\{ u\in L^2(\mathbb R^2) ~: ~ \|u\|_{Y_s}:= \|u\|_{L^2_{xy}} + \|\langle D_x\rangle^s u\|_{L^2_{xy}} + \|\partial_x^{-1}\partial_y u\|_{L^2_{xy}} <\infty \right\}.
\end{equation*}
Recall that, in terms of Fourier variables, $\mathcal F(\langle D_x\rangle^s u)(\xi,\mu)= (1+|\xi|^2)^{s/2} \mathcal F( u)(\xi,\mu).$
\begin{lem}\label{boundedE2}
Let $u_0\in E^2(\mathbb R^2)$ and let $u$ be the global in time solution of KP-I with initial data $u(t=0)=u_0$. Then one has
\begin{equation}\label{bounded2}
\sup_{t\ge 0} \left( \| \partial_y u(t)\|_{L^2_{xy}} +  \|\partial_x^2 u(t)\|_{L^2_{xy}} +\| \partial_x^{-2}\partial_y^2 u(t)\|_{L^2_{xy}} \right)  \leq C,
\end{equation}
with constant only dependent on $\|u_0\|_{E^2}$.
\end{lem}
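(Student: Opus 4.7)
The plan is to read \eqref{bounded2} off the conservation of the second energy $F[u]$, once one knows the solution persists in $E^2(\mathbb R^2)$ and the coercive quadratic block of $F$ dominates its cubic/quartic companions. First I would invoke the global well-posedness of KP-I in $E^2(\mathbb R^2)$ established by C.~E.~Kenig \cite{Kenig-2004}, together with the rigorous persistence/conservation of the functional $F$ proved by Molinet--Saut--Tzvetkov \cite{Molinet-Saut-Tzvetkov-2002}. This provides a global solution $u\in C(\mathbb R;E^2(\mathbb R^2))$ with $F[u](t)=F[u_0]$ finite for all $t$, and also supplies the $E^1$ uniform bound \eqref{bounded1} through \eqref{mass}--\eqref{energy} and Lemma \ref{Interpol}.

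Next I would split $F[u]$ as $F[u]=Q[u]-R[u]$, where
\[
Q[u]:=\tfrac{3}{2}\|\partial_x^2 u\|_{L^2}^{2}+5\|\partial_y u\|_{L^2}^{2}+\tfrac{5}{6}\|\partial_x^{-2}\partial_y^2 u\|_{L^2}^{2}
\]
is the coercive part and $R[u]$ collects the four non-coercive pieces
$\tfrac{5}{6}u^2\partial_x^{-2}\partial_y^2 u$, $\tfrac{5}{6}u(\partial_x^{-1}\partial_y u)^{2}$, $-\tfrac{5}{4}u^2\partial_x^2 u$, $-\tfrac{5}{24}u^4$. The goal is the estimate $|R[u]|\le \tfrac{1}{2}Q[u]+C(\|u_0\|_{E^1})$, after which the conservation law immediately yields $Q[u](t)\le 2|F[u_0]|+2C$ uniformly in $t\ge 0$.

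To bound $R[u]$ I would use Cauchy--Schwarz together with Lemma \ref{Interpol}. The terms with $\partial_x^2 u$ and $\partial_x^{-2}\partial_y^2 u$ are handled by $\|u^2\|_{L^2}=\|u\|_{L^4}^{2}$, noting that $\|u\|_{L^4}\le c\|u\|_{L^2}^{1/4}\|\partial_x u\|_{L^2}^{1/2}\|\partial_x^{-1}\partial_y u\|_{L^2}^{1/4}$ is uniformly bounded by \eqref{bounded1}; Young's inequality then produces $\varepsilon$ times the corresponding piece of $Q[u]$ plus a constant. The quartic term $\int u^4$ is directly bounded by the same interpolation. The key difficulty is the trilinear term $\int u(\partial_x^{-1}\partial_y u)^{2}$: here I would apply Lemma \ref{Interpol} \emph{to the nonlocal variable} $v:=\partial_x^{-1}\partial_y u$, using that $\partial_x v=\partial_y u$ and $\partial_x^{-1}\partial_y v=\partial_x^{-2}\partial_y^2 u$; this gives
\[
\|\partial_x^{-1}\partial_y u\|_{L^4}^{2}\;\lesssim\; \|\partial_x^{-1}\partial_y u\|_{L^2}^{1/2}\,\|\partial_y u\|_{L^2}\,\|\partial_x^{-2}\partial_y^2 u\|_{L^2}^{1/2}.
\]
Combining with $\|u\|_{L^2}\le C$ and Young's inequality absorbs this term into $\varepsilon(\|\partial_y u\|_{L^2}^{2}+\|\partial_x^{-2}\partial_y^2 u\|_{L^2}^{2})+C_\varepsilon$. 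This nonlocal application of Lemma \ref{Interpol} is the step I expect to be the main obstacle, since it is the only term that genuinely couples two $E^2$ derivatives.

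Finally, once $Q[u](t)$ is bounded uniformly in $t$, I would recover the $\|\partial_y u\|_{L^2}$ estimate either from the explicit coefficient in $Q[u]$, or, equivalently, from the pointwise Fourier inequality $\mu^{2}\le \tfrac{1}{2}(\xi^{4}+\mu^{4}/\xi^{4})$, which gives $\|\partial_y u\|_{L^2}^{2}\le\tfrac{1}{2}(\|\partial_x^{2}u\|_{L^2}^{2}+\|\partial_x^{-2}\partial_y^{2}u\|_{L^2}^{2})$. All three quantities in \eqref{bounded2} are therefore controlled by a constant depending only on $\|u_0\|_{E^2}$ (through $F[u_0]$ and \eqref{bounded1}), and a density argument on smooth data transfers the bound to the full $E^2$ class of Kenig's theory.
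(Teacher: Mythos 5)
Your argument is correct in substance, but it follows a genuinely different route from the paper. The paper does not run an energy argument at all: it simply cites C.~E.~Kenig \cite{Kenig-2004}, whose global well-posedness theorem already contains the uniform bound $\sup_{t\ge 0}\|u(t)\|_{E^2}\le C(\|u_0\|_{E^2})$ as part of its statement, together with the observation $\|\partial_y u\|_{L^2}\lesssim \|u\|_{E^2}$; estimate \eqref{bounded2} is then read off immediately. You instead unpack the mechanism behind that bound: conservation of the second energy $F[u]$ (rigorously established in \cite{Molinet-Saut-Tzvetkov-2002}), a splitting into the coercive quadratic block and the lower-order cubic/quartic remainder, and absorption of the remainder via Lemma \ref{Interpol} applied both to $u$ and to the nonlocal variable $v=\partial_x^{-1}\partial_y u$ (using $\partial_x v=\partial_y u$, $\partial_x^{-1}\partial_y v=\partial_x^{-2}\partial_y^2 u$). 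Your treatment of the trilinear term $\int u(\partial_x^{-1}\partial_y u)^2$ is the right one, and since the bound $|R[u]|\le \tfrac12 Q[u]+C(\|u_0\|_{E^1})$ uses only the a priori $E^1$ control \eqref{bounded1}, no continuity/bootstrap argument is needed to close the estimate. What your approach buys is self-containedness and transparency about which structural facts (conservation of $F$, the anisotropic interpolation inequality) drive the bound; what it costs is that you must still invoke the $E^2$ persistence and the rigorous validity of the conservation law on Kenig's solution class (your closing density remark), so the reliance on \cite{Kenig-2004} and \cite{Molinet-Saut-Tzvetkov-2002} is not actually removed, only redistributed. Both routes yield the same conclusion with a constant depending only on $\|u_0\|_{E^2}$.
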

Indeed, in \cite{Kenig-2004}, for any $s\in (\frac32,2]$ a unique global solution $u\in C(\mathbb R_+,Y_s) \cap L^\infty(\mathbb R_+ ,E^2(\mathbb R^2))$ was constructed from initial data $u_0 \in E^2(\mathbb R^2)$ ($E^2(\mathbb R^2)$ is continuously embedded in $Y_s$ for $s\in (\frac32,2]$). Moreover, one has $\sup_{t\geq 0}\|u(t)\|_{E^2} \le C(\|u_0\|_{E^2})$. Finally, from \cite{Kenig-2004}, we also know that $\|\partial_y u\|_{L^2} \lesssim\|u\|_{E^2}$. These arguments provide \eqref{bounded2}.

\subsection{Weighted functions adapted to KP}\label{Preli1}
Let    $\phi$ be a smooth even and  positive function  such that
\begin{itemize}
	\item[(i)] $\phi'\leq 0$ \quad on\quad  $\mathbb{R}^{+},$
	
	\item[(ii)]  $\phi|_{[0,1]}=1,$\quad  $\phi(x)=e^{-x}$\quad  on\quad  $[2,\infty),\quad e^{-x}\leq \phi(x)\leq 3e^{-x}$\quad  on\quad  $\mathbb{R}^{+}.$
	
	\item[(iii)] The derivatives of $\phi$ satisfy:
\begin{equation*}
|\phi'(x)|\leq c \phi(x)\qquad\mbox{and}\qquad |\phi''(x)|\leq c\phi(x),
\end{equation*}
for some positive constant $c$.
\end{itemize}

\medskip

Let
\begin{equation}\label{psi_phi}
\psi(x):=\int_{0}^{x} \phi(s)\,\mathrm{d}s, \quad  \psi'(x)=\phi(x).
\end{equation}
Then $\psi$ is an odd function  such that  $\psi(x)=x$ on $[-1,1]$ and $|\psi(x)|\leq 3.$ Notice that
\begin{equation}\label{sigmabound}
\begin{aligned}
\psi(x)= &~{} x\quad \mbox{on}\quad [-1,1],\\
 |\psi(x)|\leq 3,\qquad & e^{-|x|}\leq \phi(x)\leq 3 e^{-|x|}\quad \mbox{on}\quad \mathbb{R}.
\end{aligned}
\end{equation}
Finally, recall that $\psi$ is odd and $\phi$ is even. The following result is key for the proof of Theorem \ref{KP_decay}.

\begin{lem}\label{Localizacion}
	Let $a,b >0$ and $x_0\in\mathbb R$. Then 
	\[
	\partial_{x}^{-1}\left(\psi\left(\frac{\cdot -x_0}{a}\right)\phi\left(\frac{\cdot -x_0}{b}\right)\right)(x)
	\]
	is Schwartz in $x$ and  the following inequality holds
	\begin{equation}\label{bound}
\left|	\partial_{x}^{-1}\left(\psi\left(\frac{\cdot -x_0}{a}\right)\phi\left(\frac{\cdot -x_0}{b}\right)\right)(x)
\right|\leq 9b\phi\left(\frac{x-x_0}{b}\right),\quad \mbox{for all}\quad x\in \mathbb{R}.
	\end{equation}
\end{lem}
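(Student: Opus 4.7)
The plan is to reduce to the centered case $x_0=0$ by translation invariance of the operator $\partial_x^{-1}$, and then to exploit an odd/even symmetry to upgrade the trivial absolute-value bound into an exponentially decaying bound. Write $f(s):=\psi(s/a)\phi(s/b)$ and $F(x):=\int_{-\infty}^x f(s)\,\mathrm{d}s$. Since $\psi$ is odd and $\phi$ is even, $f$ is an odd integrable function, so
\[
\int_{-\infty}^{\infty} f(s)\,\mathrm{d}s=0,
\]
and therefore $F$ admits the two representations $F(x)=\int_{-\infty}^x f(s)\,\mathrm{d}s=-\int_x^{\infty} f(s)\,\mathrm{d}s$. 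This identity is the crucial ingredient: it lets us integrate from the \emph{closer} infinity to $x$, so that the weight $\phi(s/b)$ kills the integrand before the support of $\psi(s/a)$ can inflate the $L^1$ norm.

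Next, I would estimate. From \eqref{sigmabound}, $|\psi|\le 3$, hence $|f(s)|\le 3\,\phi(s/b)$. For $x\ge 0$, use the second representation to get
\[
|F(x)|\le 3\int_x^\infty \phi(s/b)\,\mathrm{d}s = 3b\int_{x/b}^\infty \phi(u)\,\mathrm{d}u \le 3b\cdot 3\int_{x/b}^\infty e^{-u}\,\mathrm{d}u = 9b\,e^{-x/b}\le 9b\,\phi(x/b),
\]
where the last inequality uses $e^{-|u|}\le \phi(u)$ from \eqref{sigmabound}. For $x<0$, use the first representation and the evenness of $\phi$ to get the analogous bound $|F(x)|\le 9b\,\phi(x/b)$ by the same computation applied to $-x/b>0$. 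Restoring the translation by $x_0$ yields \eqref{bound}.

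For the Schwartz property, observe that $f$ is smooth (both $\psi$ and $\phi$ are), and the exponential bounds $|\phi^{(k)}|\lesssim \phi$ together with $|\psi|\le 3$ and the exponential decay of $\phi$ at infinity show that $f$ and all its derivatives belong to the Schwartz class. Consequently $F'=f, F''=f', \dots$ are Schwartz, and the $L^\infty$ decay of $F$ itself is precisely the bound \eqref{bound} just proved; hence $F\in\mathcal{S}(\mathbb{R})$.

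The only real obstacle is identifying that one must exploit the cancellation $\int f=0$; a brute-force estimate $|F(x)|\le \|f\|_{L^1}$ would give a bound proportional to $a\wedge b$ but \emph{without spatial decay}, which would be useless for the subsequent virial estimates. The oddness of $\psi$ (and evenness of $\phi$) is exactly what converts $\partial_x^{-1}$ applied to this product into something still localized on the scale of the faster-decaying factor $\phi(\cdot/b)$.
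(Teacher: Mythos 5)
Your proof is correct and follows essentially the same route as the paper's: reduce to $x_0=0$, split into the cases $x\ge 0$ and $x\le 0$, and for $x\ge 0$ use the oddness of $\psi(\cdot/a)\phi(\cdot/b)$ (equivalently, $\int f=0$) to replace $\int_{-\infty}^x$ by $-\int_x^\infty$, then bound via $|\psi|\le 3$ and $e^{-|u|}\le\phi(u)\le 3e^{-|u|}$. The constants and the final estimate $9b\,\phi((x-x_0)/b)$ come out exactly as in the paper.
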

\begin{proof}
First, note that $\psi\left(\frac{\cdot -x_0}{a}\right)\phi\left(\frac{\cdot-x_0}{b}\right)$ is odd with respect to $x_0$ and Schwartz in $x$. By virtue of \eqref{sigmabound}, the term	 $\partial_{x}^{-1}\left(\psi\left(\frac{\cdot-x_0}{a}\right)\phi\left(\frac{\cdot-x_0}{b}\right)\right)(x)$ is then well-defined because
\[
\begin{aligned}
\partial_{x}^{-1}\left(\psi\left(\frac{\cdot-x_0}{a}\right)\phi\left(\frac{\cdot-x_0}{b}\right)\right)(x)= &~{} \int_{-\infty}^x  \left(\psi\left(\frac{s-x_0}{a}\right)\phi\left(\frac{s-x_0}{b}\right)\right)\mathrm{d}s\\
=&~{} \int_{-\infty}^{x-x_0}  \left(\psi\left(\frac{s}{a}\right)\phi\left(\frac{s}{b}\right)\right)\mathrm{d} s.
\end{aligned}
\]	
With no loss of generality, we can assume $x_0=0$ below. To obtain the bound in \eqref{bound} we shall examine the following cases:	
\begin{itemize}
	\item[(I)] Case $x\leq0:$
	by definition  we have 
	\begin{equation*}
	\begin{split}
	\partial_{x}^{-1}\left(\psi\left(\frac{\cdot}{a}\right)\phi\left(\frac{\cdot}{b}\right)\right)(x)&=\int_{-\infty}^{x}\psi\left(\frac{s}{a}\right)\phi\left(\frac{s}{b}\right)\,\mathrm{d}s=b\int_{-\infty}^{x/b} \psi\left(\frac{bs}{a}\right)\phi(s)\,\mathrm{d}s.
	\end{split}
	\end{equation*}
	Thus,
	\begin{equation*}
	\begin{split}
	\left|\partial_{x}^{-1}\left(\psi\left(\frac{\cdot}{a}\right)\phi\left(\frac{\cdot}{b}\right)\right)(x)\right|&\leq b\int_{-\infty}^{x/b} \left|\psi\left(\frac{bs}{a}\right)\right|\phi(s)\,\mathrm{d}s\\
	&\leq 9 b \int_{-\infty}^{x/b} e^{s}\,\mathrm{d}s=9 b e^{-\frac{|x|}{ b}}\leq 9 b\phi\left(\frac{x}{b}\right).
	\end{split}
	\end{equation*}
\item[(II)]
Case $x\geq 0:$ since the function 	$\psi\left(\frac{x}{a}\right)\phi\left(\frac{x}{b}\right)$ is odd, we  have from the definition  of $\partial_{x}^{-1}$ that 
\begin{equation*}
\begin{split}
\left|\partial_{x}^{-1}\left(\psi\left(\frac{\cdot}{a}\right)\phi\left(\frac{\cdot}{b}\right)\right)(x)\right|&=\left|-\int_{x}^{\infty}\psi\left(\frac{s}{a}\right)\phi\left(\frac{s}{b}\right)\,\mathrm{d}s\right|\\
&\leq \int_{x}^{\infty}\left|\psi\left(\frac{s}{a}\right)\right|\phi\left(\frac{s}{b}\right)\,\mathrm{d}s\\
&\leq 9 b\int_{x/b}^{\infty}e^{-s }\,\mathrm{d}s\leq 9 b\phi\left(\frac{x}{b}\right).
\end{split}
\end{equation*}
\end{itemize}	  
Thus, combining both cases we obtain  the bound
\begin{equation*}
\left|\partial_{x}^{-1}\left(\psi\left(\frac{\cdot}{a}\right)\phi\left(\frac{\cdot}{b}\right)\right)(x)\right|\leq 9b\phi\left(\frac{x}{b}\right),\quad x\in\mathbb{R}.
\end{equation*}
This finishes the proof.
\end{proof}

Finally, we will consider a function $\chi\in C^{\infty}(\mathbb{R})$ satisfying:
\begin{equation}\label{chichi}
\begin{aligned}
& \hbox{$\chi(x)=1$ for $x\leq -1,$ $\chi(x)=0$ for $x\geq 0,$}\\
& \hbox{$\supp(\chi)\subset(-\infty,0]$ and $\chi'(x)\leq 0$  for all $x\in\mathbb{R}.$}
\end{aligned}
\end{equation}
Additionally,  $\chi'$ satisfies  the inequality 
$-\chi'(x)\geq c1_{[-\frac{3}{4},-\frac{1}{4}]}(x)$ for all $x\in\mathbb{R},$ and $c$ is a positive constant. This will be useful in Section \ref{Sect:6}.
 
\subsection{Scaling and attenuation parameters}\label{Scalings} We introduce now a set of time-dependent parameters that allow us to capture decay properties in the long time regime. We classify them in three classes: scaling parameters $\lambda_j(t)$, and $j=1,\ldots, 6$, attenuation parameters $\eta_1(t), \eta_2(t)$ and $\eta_3(t)$, and a sort of shift/scaling parameter $\theta(t)$. But first, we recall the exponents introduced in the definition of the set $\Omega_{1}(t)$ in \eqref{Omega}.

\medskip

Let  $b,r>0$ and  $q\in(1,2)$ numbers satisfying the conditions
\begin{equation}\label{bqconditions_new}
  b\le\frac{2}{2+q+r}<\frac{2}{3+r},\quad \frac{5}{3}<r<3.
\end{equation}
Note that for $\frac{5}{3}<r<3$ we easily have
\begin{equation*}\label{br}
br<\frac{2r}{3+r} <1.
\end{equation*}
Always consider $t\gg 1$. The key idea to have in mind is that
\begin{equation}\label{comparacion_final}
\begin{aligned}
& \lambda_1(t) \ll \lambda_2(t)  \ll \lambda_4(t) \ll \lambda_3(t) \ll \lambda_5(t) \ll \lambda_6(t),\\  
& \eta_3(t) \ll \eta_2(t) \ll \eta_1(t).
\end{aligned}
\end{equation}
First, we shall introduce $\lambda_5(t)$, $\lambda_6(t)$ and $\eta_3(t)$, as follows:
\begin{equation}\label{defns}
\lambda_5(t)=\frac{t^{b}}{\log t}\quad\mbox{and}\quad \eta_3(t)=t^{p}\log t,
\end{equation}
where $p$ is a  positive constant satisfying 
\begin{equation}\label{eq4}
p+b=1.
\end{equation}
We also consider
\begin{equation}\label{eq5}
\lambda_6(t)=\lambda_5^{r}(t) \qquad\mbox{where}\quad \frac{5}{3}< r< 3.
\end{equation}
Then,
\begin{equation*}
\frac{\lambda_5'(t)}{\lambda_5(t)}\sim \frac{\eta_3'(t)}{\eta_3(t)}\sim \frac{1}{t}\qquad\mbox{for}\quad t\gg1.
\end{equation*}
Also,
\begin{equation*}
\lambda_{1}'(t)=\frac{1}{t^{1-b}\log t}\left(\frac{b\log t-1}{\log t}\right)\quad \mbox{and}\quad \lambda_5(t)\eta_3(t)= t.
\end{equation*}
These parameters will be important to understand the $L^2$-decay of KP solutions, mainly in Section \ref{Sect:3}.

\medskip

Based on the previous parameters $\lambda_5(t)$, $\lambda_6(t)$ and $\eta_3(t)$, we construct $\eta_2(t)$, $\lambda_{3}(t)$ and $\lambda_{4}(t)$. Let $\eta_2(t)$, $\lambda_{3}(t),\lambda_{4}(t)$ locally bounded functions defined as follows: for $q=1+\varepsilon_0>1$, $\varepsilon_0>0$ small enough, 
\begin{equation}\label{def_eta2}
\begin{aligned}
& \eta_2(t)=t^{1-b} \log^{4} t, \quad \lambda_{3}(t)= \frac{\lambda_5(t)}{\log t} = \frac{t^b}{\log^2 t}, \\
& \lambda_{4}(t) = \frac{t^b}{\log^3 t} \ll \lambda_6(t) = \frac{t^{br}}{\log^{r} t} ,   \qquad \left(r>\frac53\right);
\end{aligned}
\end{equation}
such that
\begin{equation}\label{producto}
\lambda_3(t) \gg \lambda_4(t),\quad  \eta_2(t) \lambda_4(t) = t\log t, \quad \eta_2(t) \lambda_3(t) =t \log^2 t.
\end{equation}
Note that
\begin{equation}\label{condiciones1}
\frac1{\eta_2(t)\lambda_4(t)}\not\in L^1(\{t\gg 1\}), \quad \frac1{\eta_2(t)\lambda_3(t)}\in L^1(\{t\gg 1\}).
\end{equation}
The previously defined parameters will be important in Section \ref{Sect:v}.

\medskip

Finally, let $\lambda_1(t),\lambda_2(t),$ and $\eta_{3}(t)$ be time-dependent functions satisfying
 \begin{equation}\label{def_eta3}
\begin{aligned}
& \eta_1(t)=t^{1-b} \log^{6} t, \quad \lambda_1(t)= \frac{\lambda_3(t)}{\log^3 t} = \frac{t^b}{\log^5 t}, \\
& \lambda_2(t) = \frac{t^b}{\log^4 t} \ll \lambda_4(t) = \frac{t^{b}}{\log^{3} t};
\end{aligned}
\end{equation}
such that
\begin{equation}\label{producto2}
\lambda_2(t) \gg \lambda_1(t),\quad  \eta_1(t) \lambda_1(t) = t\log t, \quad \eta_1(t) \lambda_2(t) =t \log^2 t ;
\end{equation}
Finally, note that
\begin{equation*}\label{condiciones2}
\frac1{\eta_1(t)\lambda_1(t)}\not\in L^1(\{t\gg 1\}), \quad \frac1{\eta_1(t)\lambda_2(t)}\in L^1(\{t\gg 1\}).
\end{equation*}

\medskip

Next, we introduce the shift parameters needed for $\Omega_1(t)$ in \eqref{Omega}. For constants $\ell_1,\ell_2\in\mathbb R$ and $m_1,m_2\geq 0$, let
\begin{equation}\label{rho12}
\rho_1(t):=\ell_1 t^{m_1}, \quad \rho_2(t):=\ell_2 t^{m_2}.
\end{equation}
For further purposes, we need the following additional restrictions on $m_1$ and $m_2$:
\begin{equation}\label{cond_rho1rho2}
0\leq m_1<1-\frac{b}2(r+1), \quad 0\leq m_2 < 1-\frac12b(q+2-r).
\end{equation}
Note that $r>1$, therefore $\frac12(r+1)<r$. Consequently, $\frac{b}2(r+1) <br<1$. On the other hand, since $b\leq \frac{2}{2+q+r}$,
\[
\frac12b(q+2-r) \leq \frac{2+q-r}{2+q+r}< \frac{\frac13+q}{\frac{11}3+q} <1.
\]
Finally, fixed $m_2$ such that $0\leq m_2 < 1-\frac12b(3-r)$, as in \eqref{indices}, for $q=1+\varepsilon_0$ and $\varepsilon_0>0$ small enough, one has $0\leq m_2 < 1-\frac12b(q+2-r)$. These last facts corroborates that the conditions \eqref{cond_rho1rho2} are well-defined.
\medskip

Finally, in order to prove \eqref{decayfar}, we will need the following scaling functions. Let 
\begin{equation}\label{theta_1}
\theta(t)= t^{p}\log^{1+\epsilon}t,
\end{equation}
for some $\epsilon>0,$ and $p\geq 1.$ Thus,
	\begin{equation}\label{e2}
		\frac{\theta'(t)}{\theta(t)}\sim \frac{1}{t}\quad \mbox{for}\quad t\gg 1.
	\end{equation}
Unlike the previous results, here we have 
\begin{equation}\label{e3}
	\frac{1}{\theta(t)}=\frac{1}{t^{p}\log^{1+\epsilon}t}\in L^{1}(\{t\gg 1\}).
\end{equation}

\bigskip

\section{Decay in far regions. Proof of decay property \eqref{decayfar}}\label{Sect:6}

\subsection{Preliminaries} The proof of this result follows the ideas in \cite{MMPP}, with some minor changes. For that reason, we include as preparative for the more involved proofs in next sections. Let $\theta(t)$ be as in \eqref{theta_1}, and $\chi$ as in \eqref{chichi}. Recall that we consider both cases in \eqref{KP:Eq}, $\kappa=\pm 1$. Consider the functionals
\[
\mathcal L_x(t):= \frac{1}{2}\int_{\mathbb{R}^{2}}u^{2}\chi\left(\frac{x+\theta (t)}{\theta (t)}\right)\,\mathrm{d}x\mathrm{d}y, \quad \mathcal L_y(t):= \frac{1}{2}\int_{\mathbb{R}^{2}}u^{2}\chi\left(\frac{y+\theta (t)}{\theta (t)}\right)\,\mathrm{d}x\mathrm{d}y;
\]
well-defined for solutions of both KP equations. 

\begin{prop}\label{paso4}
For $t\gg1$, $\theta(t)$ as in \eqref{theta_1}, and $\chi$ as in \eqref{chichi}, one has
\begin{equation}\label{virial_Lx}
\frac{\theta '(t)}{2\theta (t)}\int_{\mathbb{R}^{2}}u^{2} (x,y,t)\left| \chi'\left(\frac{x+\theta (t)}{\theta (t)}\right) \right| \left(1- \left(\frac{x+\theta (t)}{\theta (t)}\right) \right) \,\mathrm{d}x\mathrm{d}y \leq -\frac{d}{dt} \mathcal L_x(t) + \mathcal L_{x,int}(t),
\end{equation}
and
\begin{equation}\label{virial_Ly}
\frac{\theta '(t)}{2\theta (t)}\int_{\mathbb{R}^{2}}u^{2}(x,y,t) \left| \chi'\left(\frac{y+\theta (t)}{\theta (t)}\right) \right| \left(1- \left(\frac{y+\theta (t)}{\theta (t)}\right) \right) \,\mathrm{d}x\mathrm{d}y \leq -\frac{d}{dt} \mathcal L_y(t) + \mathcal L_{y,int}(t),
\end{equation}
where both $ \mathcal L_{x,int}(t)$ and $ \mathcal L_{y,int}(t)$ are in $L^1(\{t\gg 1\})$.
\end{prop}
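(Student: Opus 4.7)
The plan is to differentiate $\mathcal L_x(t)$ and $\mathcal L_y(t)$, insert the KP equation \eqref{KP:Eq}, and extract the principal term coming from the time derivative of the weight, while bounding every remaining integral by a quantity in $L^1(\{t\gg 1\})$.

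For $\mathcal L_x$, set $\xi:=(x+\theta(t))/\theta(t)=1+x/\theta$. A direct calculation gives $\partial_t\chi(\xi)=\chi'(\xi)(1-\xi)\theta'/\theta=-|\chi'(\xi)|(1-\xi)\theta'/\theta$, so $\frac12\int u^2\partial_t\chi\,dxdy$ equals minus the left-hand side of \eqref{virial_Lx}. Substituting $\partial_t u=-\partial_x^3 u-\kappa\partial_x^{-1}\partial_y^2 u-u\partial_x u$ and integrating by parts in $x$, the third-order dispersive piece produces, in the standard KdV way, $\frac{3}{2\theta}\int(\partial_x u)^2|\chi'(\xi)|\,dxdy+\frac{1}{2\theta^3}\int u^2\chi'''(\xi)\,dxdy$; the quadratic nonlinearity becomes $\frac{1}{3\theta}\int u^3\chi'(\xi)\,dxdy$; and the nonlocal term, after setting $v:=\partial_x^{-1}\partial_y u$, integrating by parts in $y$ (using $\partial_y u=\partial_x v$) and then in $x$, reduces to $-\frac{\kappa}{2\theta}\int v^2\chi'(\xi)\,dxdy$.

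Each error carries a factor $\theta^{-j}$ with $j\in\{1,3\}$, and with $\theta(t)=t^p\log^{1+\epsilon}t$, $p\geq 1$, one has $\theta^{-j}\in L^1(\{t\gg 1\})$ by \eqref{e3}. Under the KP-I hypothesis $u_0\in E^1(\mathbb R^2)$, the uniform-in-time bounds \eqref{bounded1} on $\|u\|_{L^2}$, $\|\partial_x u\|_{L^2}$, $\|v\|_{L^2}$, together with Lemma \ref{Interpol} applied with $p=3$ for the $|u|^3$ integral, yield pointwise-in-$t$ bounds on the integral factors, placing every error in $L^1$ and defining $\mathcal L_{x,int}(t)$. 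In the KP-II case ($\kappa=+1$, $u_0\in L^2$ only), one first proves the identity on a smooth dense subclass and passes to the limit using global well-posedness \cite{Bourgain1993}; the lack of a pointwise bound on $\int(\partial_x u)^2|\chi'|$ is compensated by local Kato smoothing integrated against the factor $1/\theta$, and the $v^2$-term carries the favorable sign (since $\kappa=+1$ makes $-\frac{\kappa}{2\theta}\int v^2\chi'=\frac{1}{2\theta}\int v^2|\chi'|\geq 0$ a nonnegative contribution to $d\mathcal L_x/dt$), which the claimed inequality simply discards.

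For $\mathcal L_y$, the analysis simplifies because $\chi(1+y/\theta)$ is independent of $x$, so integration by parts in $x$ annihilates both $\int u\partial_x^3 u\cdot\chi$ and $\int u^2\partial_x u\cdot\chi$. Only the time-weight piece and the nonlocal piece survive; the latter, after IBP in $y$ combined with $\partial_y u=\partial_x v$ and a further IBP in $x$ eliminating the crossed integral, reduces to $\frac{\kappa}{\theta}\int uv\,\chi'(\eta)\,dxdy$ with $\eta=(y+\theta)/\theta$. Cauchy-Schwarz together with the uniform $L^2$ bounds on $u$ and $v$ then produces \eqref{virial_Ly}. The main obstacle throughout is the KP-II control of the quadratic derivative terms with only $L^2$ data, which is resolved by the limiting argument sketched above and by exploiting the favorable sign of $\kappa=+1$; the key structural feature making the whole strategy work is that $\theta(t)\gg t$ endows every error with a factor $\theta^{-j}$ that is already $L^1$ in time.
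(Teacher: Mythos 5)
Your computation is the paper's proof: differentiate $\mathcal L_x$, $\mathcal L_y$, insert the equation, integrate by parts, keep the term coming from $\partial_t\chi$ as the left-hand side, and bound every remainder by $C\theta^{-j}\in L^1(\{t\gg1\})$ using the uniform bounds \eqref{bounded1} and Lemma \ref{Interpol}; all your identities (the $\frac{3}{2\theta}\int(\partial_xu)^2|\chi'|$ and $\frac{1}{2\theta^3}\int u^2\chi'''$ pieces, the reduction of the nonlocal term to $-\frac{\kappa}{2\theta}\int v^2\chi'$ for $\mathcal L_x$ and to $\frac{\kappa}{\theta}\int uv\,\chi'$ for $\mathcal L_y$) agree with the paper's $A_i$, $B_i$ decomposition. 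One caution about your KP-II aside: since the inequality to prove is $\mathrm{LHS}\le-\frac{d}{dt}\mathcal L_x+\mathcal L_{x,int}$, only \emph{nonpositive} contributions to $\frac{d}{dt}\mathcal L_x$ may be discarded, so the sign $-\frac{\kappa}{2\theta}\int v^2\chi'=\frac{\kappa}{2\theta}\int v^2|\chi'|$ is favorable precisely when $\kappa=-1$ (KP-I), not $\kappa=+1$; for KP-II this term is nonnegative and must be bounded above, which the paper does simply by invoking \eqref{bounded1} (it does not give the density/Kato-smoothing argument you sketch for $L^2$-only data).
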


Although \eqref{virial_Lx} and \eqref{virial_Ly} are similar estimates, they require different proofs, because of different terms appearing in the computation of $\frac{d}{dt} \mathcal L_x(t)$ and $\frac{d}{dt} \mathcal L_y(t)$. The key point in \eqref{virial_Lx} and \eqref{virial_Ly} is the fact that $\chi'$ is supported in the interval $[-1,0]$, and $\chi'\leq 0$, which implies that  both terms in the LHS of  \eqref{virial_Lx} and \eqref{virial_Ly} are nonnegative.

\medskip

From the proof it will become clear that similar estimates as \eqref{virial_Lx} and \eqref{virial_Ly} hold for $\theta(t)= -t^{p} \log^{1+\epsilon}t$ in \eqref{theta_1}, after the change $\chi(s)\mapsto \chi(-s)$. We left the details to the interested reader. These new estimates will provide the whole set $\Omega_2(t)$ described in Theorem \ref{KP_decay} eq. \eqref{decayfar}.

\begin{proof}[Proof of Proposition \ref{paso4}]
We first prove \eqref{virial_Lx}. We multiply the equation in \eqref{KP:Eq} by $u\chi\left(\frac{x+\theta (t)}{\theta (t)}\right)$, and after that we integrate in the spatial variables $(x,y)\in \mathbb R^2$  to obtain the identity
	\begin{equation*}
		\begin{aligned}
			\frac{\mathrm{d}}{\mathrm{d}t} \mathcal L_{x}(t) 
			&~{} = \frac{1}{2}\int_{\mathbb{R}^{2}} u^{2}\partial_{t}\left(\chi\left(\frac{x+\theta (t)}{\theta (t)}\right)\right)\,\mathrm{d}x\mathrm{d}y - \int_{\mathbb{R}^{2}}u\partial_{x}^{3}u\chi\left(\frac{x+\theta (t)}{\theta (t)}\right)\,\mathrm{d}x\mathrm{d}y  \\
			&~{}\quad -\kappa \int_{\mathbb{R}^{2}}u\partial_{x}^{-1}\partial_{y}^{2}u\chi\left(\frac{x+\theta (t)}{\theta (t)}\right)\,\mathrm{d}x\mathrm{d}y- \int_{\mathbb{R}^{2}}u^{2}\partial_{x}u\chi\left(\frac{x+\theta (t)}{\theta (t)}\right)\,\mathrm{d}x\mathrm{d}y\\
			&~{} = : A_1(t) +A_2(t) +A_3(t) +A_4(t).
		\end{aligned}
	\end{equation*}
Now, we proceed to  estimate every term separately. In this sense,  for $A_{1}$ we have the following: 
\begin{equation*}
	\begin{split}
		A_{1}(t)&=\frac{1}{2}\int_{\mathbb{R}^{2}} u^{2}\partial_{t}\left(\chi\left(\frac{x+\theta (t)}{\theta (t)}\right)\right)\,\mathrm{d}x\mathrm{d}y\\
		&= \frac{\theta '(t)}{2\theta (t)}\int_{\mathbb{R}^{2}}u^{2}\chi'\left(\frac{x+\theta (t)}{\theta (t)}\right)\,\mathrm{d}x\mathrm{d}y - \frac{\theta '(t)}{2\theta (t)}\int_{\mathbb{R}^{2}}u^{2}\chi'\left(\frac{x+\theta (t)}{\theta (t)}\right)\left(\frac{x+\theta (t)}{\theta (t)}\right)\,\mathrm{d}x\mathrm{d}y\\
	&=A_{1,1}(t)+A_{1,2}(t).
	\end{split}
\end{equation*}
Notice that $\chi'\leq 0,$ $\supp(\chi') \subset (-1,0)$ and \eqref{e2} imply that  $A_{1,1}(t)\leq 0$  and $A_{1,2}(t)\leq 0,$ therefore,  $A_{1}(t)\leq 0$  for all $t\geq0.$ This is exactly the LHS of \eqref{virial_Lx}, up to a minus sign.

\medskip

In the second place,  after applying integration by parts we get 
\begin{equation*}
	\begin{split}
A_{2}(t)&=		-\frac{3}{2\theta (t)}\int_{\mathbb{R}^{2}}\left(\partial_{x}u\right)^{2}\chi'\left(\frac{x+\theta (t)}{\theta (t)}\right)\,\mathrm{d}x\mathrm{d}y+\frac{1}{2\theta ^{3}(t)}\int_{\mathbb{R}^{2}}u^{2}\chi'''\left(\frac{x+\theta (t)}{\theta (t)}\right)\,\mathrm{d}x\mathrm{d}y\\
&=A_{2,1}(t)+A_{2,2}(t).
	\end{split}
\end{equation*}
Combining   \eqref{bounded1} and  \eqref{e3} we obtain
\begin{equation*}
	\begin{split}
	|A_{2,1}(t)|&\lesssim\frac{\|\partial_{x}u(t)\|_{	L^{\infty}_{t}L^{2}_{xy}}^{2}}{\theta (t)}	\lesssim \frac{C}{\theta (t)}\in L^{1}(\{t\gg 1\}).
	\end{split}
\end{equation*}
As for $A_{2,2}$ the $L^{2}-$mass conservation and the integrability of $\theta $  yield
\begin{equation*}
	|A_{2,2}(t)|\lesssim \frac{\|u_{0}\|_{L^{2}_{xy}}^{2}}{\theta ^{3}(t)}\in L^{1}(\{t\gg 1\}).
\end{equation*}  
In the third place,  we combine    integration by parts  and \eqref{bounded1}  from where we obtain 
\begin{equation*}
	\begin{split}
	|A_{3}(t)|\lesssim \frac{1}{\theta (t)}\int_{\mathbb{R}^{2}}\left(\partial_{x}^{-1}\partial_{y}u\right)^{2}\left|\chi'\left(\frac{x+\theta (t)}{\theta (t)}\right)\right|\,\mathrm{d}x\mathrm{d}y\lesssim \frac{\|\partial_{x}^{-1}\partial_{y}u(t)\|_{L^{2}_{xy}}^{2}}{\theta (t)}\lesssim \frac{C}{\theta (t)}\in L^{1}(\{t\gg 1\}).
\end{split}
\end{equation*}
Next,  we handle $A_{4},$ for that   we use interpolation inequality \eqref{Interpol1} that combined with  the integrability of $\theta$ imply  that 
\begin{equation*}
	\begin{split}
		|A_{4}(t)|\leq \frac{1}{3\theta (t)}\int_{\mathbb{R}^{2}}|u|^{3}\left|\chi'\left(\frac{x+\theta (t)}{\theta (t)}\right)\right|\,\mathrm{d}x\mathrm{d}y\lesssim \frac{\|u(t)\|_{L^{\infty}_{t}L^{3}_{xy}}^{3}}{\theta (t)} \lesssim \frac{C}{\theta (t)}\in L^{1}(\{t\gg 1\}).
	\end{split}
\end{equation*}
Finally, we gather the estimates above  to obtain 
\[
\frac{\mathrm{d}}{\mathrm{d}t} \mathcal L_{x}(t)  = \frac{\theta '(t)}{2\theta (t)}\int_{\mathbb{R}^{2}}u^{2} (x,y,t)  \chi'\left(\frac{x+\theta (t)}{\theta (t)}\right) \left(1- \left(\frac{x+\theta (t)}{\theta (t)}\right) \right) \,\mathrm{d}x\mathrm{d}y +\mathcal L_{x,int}(t),
\] 
with $\mathcal L_{x,int}(t)\in L^1(\{t\gg 1\})$, which is nothing but \eqref{virial_Lx}.

\medskip

Now, we prove \eqref{virial_Ly}. Proceeding as before,
 \begin{equation*}
 	\begin{split}
 		\frac{\mathrm{d}}{\mathrm{d}t} \mathcal L_{y}(t)= &~{} \frac{1}{2}\int_{\mathbb{R}^{2}} u^{2}\partial_{t}\left(\chi\left(\frac{y+\theta (t)}{\theta (t)}\right)\right)\,\mathrm{d}x\mathrm{d}y- \int_{\mathbb{R}^{2}}u\partial_{x}^{3}u\chi\left(\frac{y+\theta (t)}{\theta (t)}\right)\,\mathrm{d}x\mathrm{d}y \\
 		& +\kappa\int_{\mathbb{R}^{2}}u\partial_{x}^{-1}\partial_{y}^{2}u\chi\left(\frac{y+\theta (t)}{\theta (t)}\right)\,\mathrm{d}x\mathrm{d}y-\int_{\mathbb{R}^{2}}u^{2}\partial_{x}u\chi\left(\frac{y+\theta (t)}{\theta (t)}\right)\,\mathrm{d}x\mathrm{d}y\\
		=:&~{}B_1(t)+B_2(t)+B_3(t)+B_4(t)  .
 	\end{split}
 \end{equation*}
In this case the localization in the $y-$direction allows us to simplify several terms. More precisely, one has $B_{2}(t)=B_{4}(t)=0.$

\medskip

Next,    we obtain  by using integration by parts that 
\begin{equation*}
	\begin{split}
		B_{1}(t)&= \frac{\theta '(t)}{2\theta (t)}\int_{\mathbb{R}^{2}}u^{2}\chi'\left(\frac{y+\theta (t)}{\theta (t)}\right)\,\mathrm{d}x\mathrm{d}y-\frac{\theta '(t)}{2\theta (t)}\int_{\mathbb{R}^{2}}u^{2}\chi'\left(\frac{y+\theta (t)}{\theta (t)}\right)\left(\frac{y+\theta (t)}{\theta (t)}\right)\,\mathrm{d}x\mathrm{d}y\\
		&= B_{1,1}(t)+B_{1,2}(t).
	\end{split}
\end{equation*}
As we mentioned in the previous case, the terms $B_{1,1}$ and $B_{1,2}$ are positive and these will provide  the information in the LHS of \eqref{virial_Ly}.

\medskip

Finally, as for $B_{3},$ we have that after combining integration by parts and \eqref{bounded1},
\begin{equation*}
	\begin{split}
		|B_{3}(t)|&\leq \frac{1}{\theta (t)} \left| \int_{\mathbb{R}^{2}}u\partial_{x}^{-1}\partial_{y}u\chi'\left(\frac{y+\theta (t)}{\theta (t)}\right)\,\mathrm{d}x\mathrm{d}y \right| \lesssim \frac{C}{\theta (t)}\in L^{1}(\{t\gg 1\}).
	\end{split}
\end{equation*}
Collecting the previous estimates, this ends the proof.
\end{proof}
Now we are ready to conclude the proof of \eqref{decayfar}. First of all, from \eqref{virial_Lx} and \eqref{virial_Ly} we obtain the existence of an increasing sequence $t_n\to +\infty$ such that 
\begin{equation*}
\begin{split}
\lim_{n\to +\infty} \int_{\mathbb{R}^{2}}u^{2} (x,y,t_n)\left| \chi'\left(\frac{x+\theta (t_n)}{\theta (t_n)}\right) \right| \left(1- \left(\frac{x+\theta (t_n)}{\theta (t_n)}\right) \right) \,\mathrm{d}x\mathrm{d}y =0,\,
 \mbox{and}\\
\lim_{n\to +\infty} \int_{\mathbb{R}^{2}}u^{2} (x,y,t_n)\left| \chi'\left(\frac{y+\theta (t_n)}{\theta (t_n)}\right) \right| \left(1- \left(\frac{y+\theta (t_n)}{\theta (t_n)}\right) \right) \,\mathrm{d}x\mathrm{d}y=0.
\end{split}
\end{equation*}
Proceeding as in \cite{MPS,MMPP},  we choose a $C^\infty$ bump function $\xi\geq 0$ with support in $[-\frac34, -\frac14]$, and consider the functionals
\[
\mathcal M_x(t):= \frac{1}{2}\int_{\mathbb{R}^{2}}u^{2}\xi\left(\frac{x+\theta (t)}{\theta (t)}\right)\,\mathrm{d}x\mathrm{d}y, \quad \mathcal M_y(t):= \frac{1}{2}\int_{\mathbb{R}^{2}}u^{2}\xi\left(\frac{y+\theta (t)}{\theta (t)}\right)\,\mathrm{d}x\mathrm{d}y.
\]
 As we did  before, and using that $\xi' \le C |\chi'|$, we get
\[
\begin{aligned}
&~{} \left| \frac{d}{dt}\mathcal M_x(t) \right| +\left| \frac{d}{dt}\mathcal M_y(t) \right|\\
&~{} \quad  \lesssim \frac{1}{t}\int_{\mathbb{R}^{2}}u^{2} (x,y,t) \left( \left| \chi'\left(\frac{x+\theta (t)}{\theta (t)}\right) \right|  + \left| \chi'\left(\frac{y+\theta (t)}{\theta (t)}\right) \right| \right)  \,\mathrm{d}x\mathrm{d}y + \mathcal M_{int}(t),
\end{aligned}
\]
with $ \mathcal M_{int}(t)\in L^1(\{t\gg 1\})$. Integrating between $[t,t_n]$, using Proposition \ref{paso4} and passing to the limit, we conclude.

\begin{rem}
A proof of strong decay for $\partial_x u$ and $\partial_x^{-1} \partial_y u$ as in \eqref{decayfar} is an interesting open problem. In the case of regions around the origin, the answers are provided in \eqref{decay_v} and \eqref{decay_ux}. This is possible thanks to the chain of virial identities that we will prove in forthcoming sections. In the case of the region $\Omega_2(t)$, these arguments need important improvements to hold true.
\end{rem}

\medskip

 \section{Kato Virial estimates in the KP-I case}\label{sect:ux}

\subsection{Preliminaries} Let $\Phi(x,y)$ be the smooth function defined as (see \eqref{psi_phi})
\begin{equation}\label{def_phi3}
\Phi(x,y) := \psi(x) \phi (y).
\end{equation} 
Let $\lambda_1(t),\lambda_2(t),$ and $\eta_{1}(t)$ be the time-dependent functions introduced in Subsection \ref{Scalings}. 
 We consider now the classical mass virial functional \cite{dBM}
 \begin{equation}\label{mK}
 \mathcal K(t) :=\frac{1}{\eta_1(t)}\int_{\mathbb{R}^{2}} u^{2}\Phi\left(\frac{\tilde x}{\lambda_1(t)},\frac{\tilde y}{\lambda_2(t)}\right)\,\mathrm{d}x\mathrm{d}y,
 \end{equation} 
with
\begin{equation}\label{tildexy}
\tilde x:= x-\rho_1(t), \quad \tilde y:= y-\rho_2(t),
\end{equation}
and $\rho_1(t),\rho_2(t)$ defined in \eqref{rho12}.

\medskip

In order to give a first insight on how to show \eqref{decay_ux}, we will prove the following estimate.

\begin{prop}\label{paso1}
Let $u_0 \in  E^1(\mathbb R^2)$. Let $u$ be the corresponding global solution to \eqref{KP:Eq} with initial data $u(t=0)=u_0$ in the KP-I case.  Let $ \mathcal K(t)$ be the functional defined in \eqref{mK}. Then, under the setting of Subsection \ref{Scalings}, one has that $ \mathcal K(t)$ is well-defined and bounded in time, and for a fixed $\sigma_0,C_0>0$,
\begin{equation}\label{estimate_ux}
\begin{aligned}
&~{} \frac{\sigma_0}{t\log t}\int_{\mathbb{R}^{2}} (\partial_x u)^2 \phi \left(\frac{\tilde x}{\lambda_1(t)} \right) \phi\left(\frac{\tilde y}{\lambda_2(t)}\right)   \, \mathrm{d}x\mathrm{d}y  \\
&~{} \qquad  \leq  -\frac{d\mathcal K(t)}{dt} +  \frac{C_0}{t\log t}\int_{\mathbb{R}^{2}} (\partial_x^{-1}\partial_y u)^2   \phi \left(\frac{\tilde x}{\lambda_1(t)} \right) \phi\left(\frac{\tilde y}{\lambda_2(t)}\right)   \, \mathrm{d}x\mathrm{d}y \\
&~{} \qquad \quad  +\frac{C_0}{t\log t} \int_{\mathbb{R}^{2}} |u|^3 \phi \left(\frac{\tilde x}{\lambda_1(t)} \right) \phi\left(\frac{\tilde y}{\lambda_2(t)}\right) \, \mathrm{d}x\mathrm{d}y+ \mathcal K_{int}(t),
\end{aligned}
\end{equation} 
with $ \mathcal K_{int}(t)\in L^1(\{t\gg1\}).$
\end{prop}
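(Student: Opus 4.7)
The plan is to compute $\frac{d\mathcal K(t)}{dt}$ directly using the KP-I equation ($\kappa=-1$ in \eqref{KP:Eq}), isolate the sign-definite term involving $(\partial_x u)^2$ produced by the dispersive part, quarantine the two ``bad'' terms (the $(\partial_x^{-1}\partial_y u)^2$ term from the nonlocal transport and the $|u|^3$ term from the nonlinearity), and absorb everything else into $\mathcal K_{int}(t)$. Well-posedness of $\mathcal K(t)$ follows from \eqref{bounded1} together with the uniform bound $|\Phi|\le 3$.

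First I would differentiate:
\[
\frac{d\mathcal K(t)}{dt} = -\frac{\eta_1'(t)}{\eta_1(t)}\mathcal K(t) + \frac{1}{\eta_1(t)}\int_{\mathbb R^2} u^2\,\partial_t\Phi\Big(\tfrac{\tilde x}{\lambda_1},\tfrac{\tilde y}{\lambda_2}\Big)\,\mathrm{d}x\mathrm{d}y + \frac{2}{\eta_1(t)}\int_{\mathbb R^2} u\,u_t\,\Phi\Big(\tfrac{\tilde x}{\lambda_1},\tfrac{\tilde y}{\lambda_2}\Big)\,\mathrm{d}x\mathrm{d}y,
\]
and substitute $u_t=-\partial_x^3 u + \partial_x^{-1}\partial_y^2 u - u\partial_x u$. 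For the dispersive part, two integrations by parts in $x$ give the standard identity
\[
-\int u\,\partial_x^3 u\,\Phi\,\mathrm{d}x\mathrm{d}y = -\frac{3}{2}\int (\partial_x u)^2 \partial_x\Phi\,\mathrm{d}x\mathrm{d}y + \frac{1}{2}\int u^2\,\partial_x^3\Phi\,\mathrm{d}x\mathrm{d}y .
\]
Since $\psi'=\phi$, one has $\partial_x\Phi=\frac{1}{\lambda_1(t)}\phi(\tilde x/\lambda_1)\phi(\tilde y/\lambda_2)$, and because $\lambda_1(t)\eta_1(t)=t\log t$ by \eqref{producto2}, the main $-\frac{3}{2}$ term contributes
\[
-\frac{3}{t\log t}\int_{\mathbb R^2}(\partial_x u)^2\,\phi\Big(\tfrac{\tilde x}{\lambda_1}\Big)\phi\Big(\tfrac{\tilde y}{\lambda_2}\Big)\,\mathrm{d}x\mathrm{d}y
\]
to $\frac{d\mathcal K}{dt}$, providing the LHS of \eqref{estimate_ux} with $\sigma_0<3$. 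The lower-order piece $\frac{1}{\lambda_1^3\eta_1}\int u^2 \psi'''\phi$ is, using \eqref{bounded1} and \eqref{def_eta3}, bounded by $C\log^{9} t\, t^{-(1+2b)}\in L^1(\{t\gg 1\})$.

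For the nonlocal term, one IBP in $y$ yields
\[
\int u\,\partial_x^{-1}\partial_y^2 u\,\Phi\,\mathrm{d}x\mathrm{d}y = -\int \partial_y u\,\partial_x^{-1}\partial_y u\,\Phi\,\mathrm{d}x\mathrm{d}y - \int u\,\partial_x^{-1}\partial_y u\,\partial_y\Phi\,\mathrm{d}x\mathrm{d}y.
\]
Using $\partial_y u=\partial_x(\partial_x^{-1}\partial_y u)$ and one more IBP in $x$ on the first term produces $\frac{1}{2}\int (\partial_x^{-1}\partial_y u)^2\,\partial_x\Phi$, i.e., the signed bad term $\frac{1}{t\log t}\int (\partial_x^{-1}\partial_y u)^2\phi\phi$ on the RHS of \eqref{estimate_ux}. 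For the cross-term, $\partial_y\Phi=\frac{1}{\lambda_2}\psi(\tilde x/\lambda_1)\phi'(\tilde y/\lambda_2)$, so by Cauchy--Schwarz and \eqref{bounded1} it is bounded by $C/(\lambda_2(t)\eta_1(t))=C/(t\log^2 t)\in L^1(\{t\gg1\})$. The nonlinear term gives, by IBP,
\[
-\int u^2\partial_x u\,\Phi\,\mathrm{d}x\mathrm{d}y = \frac{1}{3}\int u^3\,\partial_x\Phi\,\mathrm{d}x\mathrm{d}y = \frac{1}{3\lambda_1(t)}\int u^3\,\phi\Big(\tfrac{\tilde x}{\lambda_1}\Big)\phi\Big(\tfrac{\tilde y}{\lambda_2}\Big)\,\mathrm{d}x\mathrm{d}y,
\]
which is the $|u|^3$ term on the RHS.

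It remains to dispose of the time-derivative terms. Using $\frac{d}{dt}\big((x-\rho_1)/\lambda_1\big)=-\rho_1'/\lambda_1-(\lambda_1'/\lambda_1)X$ and similarly for $Y$, the weight $\partial_t\Phi$ splits into four pieces. The pieces involving $\lambda_j'/\lambda_j\sim 1/t$ combine with $1/\eta_1$ to give $O(1/(t^{2-b}\log^6 t))$, which is integrable. The shift pieces involve factors $\rho_k'/\lambda_k$; invoking \eqref{rho12} and the constraint \eqref{cond_rho1rho2} (which in particular gives $m_1,m_2<1$), they are $\lesssim t^{m_k-1}(\lambda_k\eta_1)^{-1}\|u\|_{L^2}^2 \lesssim 1/(t^{2-m_k}\log^\alpha t)$ with $2-m_k>1$, hence integrable. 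The attenuation term $-(\eta_1'/\eta_1)\mathcal K$ is bounded by $C/(t^{2-b}\log^6 t)$ via \eqref{bounded1}. Collecting all such contributions into $\mathcal K_{int}(t)\in L^1(\{t\gg 1\})$ and rearranging delivers \eqref{estimate_ux}.

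The main obstacles here are, first, making sure the wrong-sign nonlocal contribution only appears in the controllable form $\int(\partial_x^{-1}\partial_y u)^2\phi\phi$ (since in KP-I no monotonicity gain is available from this term, it must be left on the RHS and absorbed later by $\mathcal J(t)$), and second, checking that the shift factors $\rho_1',\rho_2'$ from $\partial_t\Phi$, which grow with $m_1,m_2$, still yield integrable errors. Both issues are precisely what the parameter hierarchy \eqref{comparacion_final}--\eqref{def_eta3} and the constraints \eqref{cond_rho1rho2} are tuned to handle.
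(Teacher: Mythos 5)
Your proposal is correct and follows essentially the same route as the paper: the same splitting of $\tfrac{d}{dt}\mathcal K$ into the attenuation, weight-derivative and $u\,\partial_t u$ pieces, the same integrations by parts producing the coefficient $-3/(\eta_1\lambda_1)=-3/(t\log t)$ on the $(\partial_x u)^2$ term, the same isolation of the wrong-sign $(\partial_x^{-1}\partial_y u)^2$ and $|u|^3$ contributions, and the same use of $\eta_1\lambda_2=t\log^2 t$ and $m_k<1$ to show the cross and shift terms are integrable. No gaps.
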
 

\begin{rem}
We believe that in the KP-II setting, estimate \eqref{estimate_ux} should be better, in the sense that one should have
\begin{equation}\label{estimate_ux_KPII}
\begin{aligned}
&~{} \frac{\sigma_0}{t\log t}\int_{\mathbb{R}^{2}} \left( (\partial_x u)^2 + (\partial_x^{-1}\partial_y u)^2 \right) \phi \left(\frac{\tilde x}{\lambda_1(t)} \right) \phi\left(\frac{\tilde y}{\lambda_2(t)}\right)   \, \mathrm{d}x\mathrm{d}y  \\
&~{} \qquad  \leq  -\frac{d\mathcal K(t)}{dt}  +\frac{C_0}{t\log t} \int_{\mathbb{R}^{2}} |u|^3 \phi \left(\frac{\tilde x}{\lambda_1(t)} \right) \phi\left(\frac{\tilde y}{\lambda_2(t)}\right) \, \mathrm{d}x\mathrm{d}y+ \mathcal K_{int}(t).
\end{aligned}
\end{equation} 
Indications leading to this conjecture are given in de Bouard-Martel \cite{dBM}, where similar estimates are proved in the case of \emph{bounded solutions in $E^1(\mathbb R^2)$}, and where the KP-II Cauchy problem in $E^1(\mathbb R^2)$ was showed globally well-posed. However, in our case some key uniform in time bounds in $E^1(\mathbb R^2)$ are missing. The proof of \eqref{estimate_ux_KPII} is an interesting open problem. 
\end{rem}

Estimate \eqref{estimate_ux} shows that we need two additional estimates to control the remaining terms on the right of \eqref{estimate_ux}, since the classical Kato smoothing estimate fails in the KP case. This will be done in next sections. For the moment, we prove Proposition \ref{paso1}.
 
\subsection{Virial computations} The following computations are somehow classical, see \cite{dBS} and \cite{dBM} for instance. We have, 
 \begin{equation}\label{dtK}
 \begin{split}
& \frac{\mathrm{d}}{\mathrm{d}t}\mathcal K(t)\\
&=\frac{2}{\eta_1(t)}\int_{\mathbb{R}^{2}} u\partial_{t}u \Phi \left(\frac{\tilde x}{\lambda_1(t)},\frac{\tilde y}{\lambda_2(t)}\right)\,\mathrm{d}x\mathrm{d}y-\frac{\eta_1'(t)}{\eta_1^{2}(t)}\int_{\mathbb{R}^{2}} u^{2}\Phi \left(\frac{\tilde x}{\lambda_1(t)},\frac{\tilde y}{\lambda_2(t)}\right)\,\mathrm{d}x\mathrm{d}y\\
&+\frac{1}{\eta_1(t)}\int_{\mathbb{R}^{2}} u^{2}\partial_{t}\left(\Phi \left(\frac{\tilde x}{\lambda_1(t)},\frac{\tilde y}{\lambda_2(t)}\right)\right)\,\mathrm{d}x\mathrm{d}y\\
&=\mathcal K_{1}(t)+\mathcal K_{2}(t)+\mathcal K_{3}(t).
 \end{split}
 \end{equation}
 We easily bound $\mathcal K_{2}(t)$ as follows:
 \begin{equation}\label{K2}
 |\mathcal K_{2}(t)| \lesssim \frac1{t\eta_1(t)} \|u_0\|_{L^2_{xy}}^2 \in L^1(\{t\gg 1\}),
 \end{equation}
 since $\eta_1(t)\gg \log^2 t$.  On the one hand, $\mathcal K_{3}(t)$ is bounded as follows. First of all,
 \[
  \begin{aligned}
\mathcal K_{3}(t) =&~{} \frac{1}{\eta_1(t)}\int_{\mathbb{R}^{2}} u^{2}\partial_{t}\left(\Phi \left(\frac{\tilde x}{\lambda_1(t)},\frac{\tilde y}{\lambda_2(t)}\right)\right)\,\mathrm{d}x\mathrm{d}y\\
=&~{} - \frac{\lambda_1'(t)}{\lambda_1(t)\eta_1(t)}\int_{\mathbb{R}^{2}} u^{2}\left( \frac{\tilde{x}}{\lambda_1(t)}\right) \partial_x \Phi \left(\frac{\tilde x}{\lambda_1(t)},\frac{\tilde y}{\lambda_2(t)}\right)\,\mathrm{d}x\mathrm{d}y\\
&~{}  -\frac{\lambda_2'(t)}{\lambda_2(t)\eta_1(t)}\int_{\mathbb{R}^{2}} u^{2} \left(\frac{\tilde{y}}{\lambda_2(t)}\right)\partial_{y} \Phi \left(\frac{\tilde x}{\lambda_1(t)},\frac{\tilde y}{\lambda_2(t)}\right)\,\mathrm{d}x\mathrm{d}y\\
&~{}  -\frac{\rho_1'(t)}{\lambda_1(t)\eta_1(t)}\int_{\mathbb{R}^{2}} u^{2} \partial_{x} \Phi \left(\frac{\tilde x}{\lambda_1(t)},\frac{\tilde y}{\lambda_2(t)}\right)\,\mathrm{d}x\mathrm{d}y\\
&~{}  -\frac{\rho_2'(t)}{\lambda_2(t)\eta_1(t)}\int_{\mathbb{R}^{2}} u^{2} \partial_{y} \Phi \left(\frac{\tilde x}{\lambda_1(t)},\frac{\tilde y}{\lambda_2(t)}\right)\,\mathrm{d}x\mathrm{d}y\\
= : &~{} \mathcal K_{3,1}(t) +\mathcal K_{3,2}(t) +\mathcal K_{3,3}(t) +\mathcal K_{3,4}(t).
 \end{aligned}
 \]
 The terms $\mathcal K_{3,1}(t)$ and $\mathcal K_{3,2}(t) $ are easily bounded: we have
 \[
 \left| \mathcal K_{3,1}(t) \right| \lesssim \frac{\|u_{0}\|_{L^{2}_{xy}}^{2}}{t \eta_1(t)} \in L^1(\{t\gg 1\}),
 \]
 and the same holds for $\mathcal K_{3,2}(t) $. On the other hand, the term $\mathcal K_{3,3}(t) $ is bounded as follows: from \eqref{def_eta3},
 \[
\left| \mathcal K_{3,3}(t) \right| = \left| \frac{\rho_1'(t)}{\lambda_1(t)\eta_1(t)}\int_{\mathbb{R}^{2}} u^{2} \partial_{x} \Phi \left(\frac{\tilde x}{\lambda_1(t)},\frac{\tilde y}{\lambda_2(t)}\right)\,\mathrm{d}x\mathrm{d}y \right| \lesssim \frac{|\ell_1|m_1\|u_{0}\|_{L^{2}_{xy}}^{2}}{t^{2-m_1} \log t},
 \]
which integrates in time since $m_1<1$ in \eqref{cond_rho1rho2}. Bounding the term $\mathcal K_{3,4}(t) $ follows the same idea. Consequently, 
 \begin{equation}\label{K3}
 \mathcal K_{3}(t) \in L^1(\{t\gg 1\}).
 \end{equation}
 On the other hand, $\mathcal K_{1}(t)$ is treated as follows. Using \eqref{KP:Eq},
 \begin{equation}\label{deco_K1}
 \begin{aligned}
 \mathcal K_{1}(t) =&~{} \frac{2}{\eta_1(t)}\int_{\mathbb{R}^{2}} u\partial_{t}u \Phi \left(\frac{\tilde x}{\lambda_1(t)},\frac{\tilde y}{\lambda_2(t)}\right)\,\mathrm{d}x\mathrm{d}y\\
= &~{} \frac{2}{\eta_1(t)}\int_{\mathbb{R}^{2}} u\left( -\partial_{x}^{3}u -\kappa \partial_{x}^{-1}\partial_{y}^{2}u -u \partial_x u\right) \Phi \left(\frac{\tilde x}{\lambda_1(t)},\frac{\tilde y}{\lambda_2(t)}\right)\,\mathrm{d}x\mathrm{d}y\\
= &~{} \frac{2}{\eta_1(t)}\int_{\mathbb{R}^{2}} \partial_x^2 u \partial_x \left( u \Phi \left(\frac{\tilde x}{\lambda_1(t)},\frac{\tilde y}{\lambda_2(t)}\right)  \right)   \, \mathrm{d}x\mathrm{d}y\\
&~{} + \frac{2 \kappa }{\eta_1(t)}\int_{\mathbb{R}^{2}} \partial_x^{-1}\partial_y u  \partial_y \left( u \Phi \left(\frac{\tilde x}{\lambda_1(t)},\frac{\tilde y}{\lambda_2(t)}\right)  \right)   \, \mathrm{d}x\mathrm{d}y\\
&~{} + \frac{2}{3\eta_1(t) \lambda_1(t) }\int_{\mathbb{R}^{2}} u^3 \partial_x \Phi \left(\frac{\tilde x}{\lambda_1(t)},\frac{\tilde y}{\lambda_2(t)}\right)  \, \mathrm{d}x\mathrm{d}y\\
= : &~{} \mathcal K_{1,1}(t)+\mathcal K_{1,2}(t)+\mathcal K_{1,3}(t).
 \end{aligned}
\end{equation}
As for the term $\mathcal K_{1,1}(t)$, we have
 \[
 \begin{aligned}
\mathcal K_{1,1}(t)= &~{} -\frac{1}{\eta_1(t) \lambda_1(t)}\int_{\mathbb{R}^{2}} (\partial_x u)^2 \left(\partial_x  \Phi\right) \left(\frac{\tilde x}{\lambda_1(t)},\frac{\tilde y}{\lambda_2(t)}\right)    \, \mathrm{d}x\mathrm{d}y \\
&~{} + \frac{2}{\eta_1(t)\lambda_1(t)}\int_{\mathbb{R}^{2}} \partial_x^2 u u \partial_x   \Phi \left(\frac{\tilde x}{\lambda_1(t)},\frac{\tilde y}{\lambda_2(t)}\right)   \, \mathrm{d}x\mathrm{d}y\\
= &~{} -\frac{3}{\eta_1(t) \lambda_1(t)}\int_{\mathbb{R}^{2}} (\partial_x u)^2 \partial_x  \Phi \left(\frac{\tilde x}{\lambda_1(t)},\frac{\tilde y}{\lambda_2(t)}\right)    \, \mathrm{d}x\mathrm{d}y \\
&~{} + \frac{1}{\eta_1(t)\lambda_1^3(t)}\int_{\mathbb{R}^{2}} u^2 \partial_x^3   \Phi \left(\frac{\tilde x}{\lambda_1(t)},\frac{\tilde y}{\lambda_2(t)}\right)   \, \mathrm{d}x\mathrm{d}y.
\end{aligned}
 \]
 The last term above is bounded by
 \[
\lesssim  \frac{1}{\eta_1(t)\lambda_1^3(t)}\|u_0\|_{L^2_{xy}}^2 \in L^1(\{t\gg 1\}),
\]
consequently $\mathcal K_{1,1}(t)$ obeys the decomposition
\begin{equation}\label{K11_final}
\mathcal K_{1,1}(t) = -\frac{3}{\eta_1(t) \lambda_1(t)}\int_{\mathbb{R}^{2}} (\partial_x u)^2 \partial_x  \Phi \left(\frac{\tilde x}{\lambda_1(t)},\frac{\tilde y}{\lambda_2(t)}\right)    \, \mathrm{d}x\mathrm{d}y + \mathcal K_{1,1,int}(t), 
\end{equation}
with $ \mathcal K_{1,1,int}(t)\in L^1(\{t\gg1\}).$ 

\medskip

The term $\mathcal K_{1,2}(t)$ in \eqref{deco_K1} is treated as follows:
\begin{equation*}\label{deco_K12}
 \begin{aligned}
\mathcal K_{1,2}(t) = &~{}   \frac{2\kappa}{\eta_1(t)}\int_{\mathbb{R}^{2}} \partial_x^{-1}\partial_y u  \partial_y \left( u \Phi \left(\frac{\tilde x}{\lambda_1(t)},\frac{\tilde y}{\lambda_2(t)}\right)  \right)   \, \mathrm{d}x\mathrm{d}y\\
= &~{}   \frac{-\kappa}{\eta_1(t) \lambda_1(t)}\int_{\mathbb{R}^{2}}  \left( \partial_x^{-1}\partial_y u\right)^2  \partial_x\Phi \left(\frac{\tilde x}{\lambda_1(t)},\frac{\tilde y}{\lambda_2(t)}\right)   \, \mathrm{d}x\mathrm{d}y\\
&~{} + \frac{2\kappa}{\eta_1(t)\lambda_2(t)}\int_{\mathbb{R}^{2}} u \partial_x^{-1}\partial_y u  \partial_y \Phi \left(\frac{\tilde x}{\lambda_1(t)},\frac{\tilde y}{\lambda_2(t)}\right)    \, \mathrm{d}x\mathrm{d}y.
 \end{aligned}
\end{equation*}
Recall that we are using that $\kappa=-1$, and $u$ is bounded in time in the energy space $ E^1(\mathbb R^2)$. Using \eqref{producto2}, the last term above is treated as
\begin{equation}\label{K12_final_a}
\lesssim \frac{1}{\eta_1(t)\lambda_2(t)} \|u_0\|_{L^2_{xy}} \| \partial_x^{-1} \partial_y u_0\|_{L^2_{xy}} \in L^1(\{t\gg 1\}).
\end{equation}
We have from 
\eqref{comparacion_final}
\[
\begin{aligned}
&~{} \left|  \frac{1}{\eta_1(t) \lambda_1(t)}\int_{\mathbb{R}^{2}} (\partial_x^{-1} \partial_y u)^2   \partial_x\Phi \left(\frac{\tilde x}{\lambda_1(t)},\frac{\tilde y}{\lambda_2(t)}\right)   \, \mathrm{d}x\mathrm{d}y \right| \\
&~{} \qquad   \sim   \frac{1}{t\log t}\int_{\mathbb{R}^{2}} (\partial_x^{-1} \partial_y u)^2   \phi \left(\frac{\tilde x}{\lambda_1(t)} \right) \phi\left(\frac{\tilde y}{\lambda_2(t)}\right)   \, \mathrm{d}x\mathrm{d}y .
\end{aligned}
\]
This last term cannot be estimated properly, unless we have independent estimates. Later, we will prove that (see \eqref{estimate_v}),
\[
\left|  \frac{1}{\eta_1(t) \lambda_1(t)}\int_{\mathbb{R}^{2}} (\partial_x^{-1} \partial_y u)^2   \partial_x\Phi \left(\frac{\tilde x}{\lambda_1(t)},\frac{\tilde y}{\lambda_2(t)}\right)   \, \mathrm{d}x\mathrm{d}y \right|   \in L^1(\{t\gg 1\}),
\]
but for the moment we will save this term for later purposes. Collecting \eqref{K12_final_a}, we conclude
\begin{equation}\label{K12_final}
|\mathcal K_{1,2}(t)| \lesssim \frac{1}{\eta_1(t) \lambda_1(t)}\int_{\mathbb{R}^{2}} (\partial_x^{-1} \partial_y u)^2   \partial_x\Phi \left(\frac{\tilde x}{\lambda_1(t)},\frac{\tilde y}{\lambda_2(t)}\right)   \, \mathrm{d}x\mathrm{d}y+  \mathcal{K}_{1,2,int}(t),
\end{equation}
with $\mathcal K_{1,2,int}(t) \in L^1(\{t\gg1\})$. 

\medskip

Finally, $\mathcal K_{1,3}$ is simply bounded as follows:
\[
\begin{split}
|\mathcal{K}_{1,3}(t)|\leq&\left| \frac{2}{3\eta_1(t) \lambda_1(t) }\int_{\mathbb{R}^{2}} u^3 \partial_x \Phi \left(\frac{\tilde x}{\lambda_1(t)},\frac{\tilde y}{\lambda_2(t)}\right)  \, \mathrm{d}x\mathrm{d}y \right| \\
&~{}  \lesssim  \frac{1}{t\log t} \int_{\mathbb{R}^{2}} |u|^3 \phi \left(\frac{\tilde x}{\lambda_1(t)} \right) \phi\left(\frac{\tilde y}{\lambda_2(t)}\right) \, \mathrm{d}x\mathrm{d}y.
\end{split}
\]
Gathering this last estimate, \eqref{K12_final} and \eqref{K11_final}, we conclude that for some fixed constant $c>0$,
\begin{equation}\label{K1_final}
\begin{aligned}
\mathcal K_1(t) \leq &~{}  -\frac{3}{\eta_1(t) \lambda_1(t)}\int_{\mathbb{R}^{2}} (\partial_x u)^2 \partial_x  \Phi \left(\frac{\tilde x}{\lambda_1(t)},\frac{\tilde y}{\lambda_2(t)}\right)    \, \mathrm{d}x\mathrm{d}y \\
&~{} +  \frac{1}{t\log t}\int_{\mathbb{R}^{2}} (\partial_x^{-1} \partial_y u)^2   \phi \left(\frac{\tilde x}{\lambda_1(t)} \right) \phi\left(\frac{\tilde y}{\lambda_2(t)}\right)   \, \mathrm{d}x\mathrm{d}y \\
&~{}  +\frac{c}{t\log t} \int_{\mathbb{R}^{2}} |u|^3 \phi \left(\frac{\tilde x}{\lambda_1(t)} \right) \phi\left(\frac{\tilde y}{\lambda_2(t)}\right) \, \mathrm{d}x\mathrm{d}y+ \mathcal K_{1,int}(t), 
\end{aligned}
\end{equation}
with $ \mathcal K_{1,int}(t)\in L^1(\{t\gg1\}).$ Coming back to \eqref{dtK}, and collecting \eqref{K2},  \eqref{K3}, and \eqref{K1_final} , we conclude that for some fixed constant $\sigma_0,C_0>0$,
\[
\begin{aligned}
&~{} \frac{\sigma_0}{t\log t}\int_{\mathbb{R}^{2}} (\partial_x u)^2 \phi \left(\frac{\tilde x}{\lambda_1(t)} \right) \phi\left(\frac{\tilde y}{\lambda_2(t)}\right)   \, \mathrm{d}x\mathrm{d}y  \\
&~{} \qquad  \leq  -\frac{d\mathcal K(t)}{dt} +  \frac{C_0}{t\log t}\int_{\mathbb{R}^{2}} (\partial_x^{-1} \partial_y u)^2   \phi \left(\frac{\tilde x}{\lambda_1(t)} \right) \phi\left(\frac{\tilde y}{\lambda_2(t)}\right)   \, \mathrm{d}x\mathrm{d}y \\
&~{} \qquad \quad  +\frac{C_0}{t\log t} \int_{\mathbb{R}^{2}} |u|^3 \phi \left(\frac{\tilde x}{\lambda_1(t)} \right) \phi\left(\frac{\tilde y}{\lambda_2(t)}\right) \, \mathrm{d}x\mathrm{d}y+ \mathcal K_{int}(t),
\end{aligned}
\]
with $ \mathcal K_{int}(t)\in L^1(\{t\gg1\}).$ In the last identity we have also used \eqref{producto2} and \eqref{def_phi3}. This last identity is nothing but \eqref{estimate_ux}. The proof is complete.

\bigskip

\section{Virial estimate for $\partial_x^{-1}\partial_y u$ in the KP-I case}\label{Sect:v}

Proposition \ref{paso1} gives us a control on the derivative $\partial_x u$, but depending on $\partial_x^{-1}\partial_y u$ and $u$ locally in $L^2$ and $L^3$, respectively. Obtaining additional control on these variables for arbitrary large data is key to get a decay property for solutions to KP-I.

\medskip

In order to prove this result, and following a similar argument as in the previous section, we will introduce a new virial identity, in the spirit of \cite{dBM}. In this reference, the authors consider compact solutions in the KP-II case. Here we will use a similar idea to obtain decay in the KP-I case. Some particular complications arise when dealing with solutions in the energy space, leading us to assume more regularity in the data than expected.

\subsection{Preliminaries} 
Recall the functions $\lambda_3(t)$, $\lambda_4(t)$ and $\eta_2(t)$ introduced in \eqref{def_eta2},  
and for $\phi$ and $\psi$ in \eqref{psi_phi}, let
\begin{equation}\label{def_phi2}
\Psi(x,y) := \phi(x) \psi (y).
\end{equation}
Note the difference with the choice in \eqref{def_phi3}. 

\medskip

For $u$ a solution of the KP-I equation in $ E^1(\mathbb R^2)$, we set (as in \cite{dBS}) $v:=\partial_x^{-1}\partial_y u$, for simplicity of notation. Consider the functional 
\begin{equation*}
\mathcal{J}(t):=\frac{1}{\eta_2(t)}\int_{\mathbb{R}^{2}} uv\Psi\left(\frac{\tilde x}{\lambda_{3}(t)},\frac{\tilde y}{\lambda_{4}(t)}
\right)\,\mathrm{d}x\,\mathrm{d}y.
\end{equation*} 
Clearly $\mathcal J$ is well-defined and bounded uniformly in time for data in $ E^1(\mathbb R^2)$, since
\[
\sup_{t\in\mathbb R}\|u(t)\|_{L^2_{xy}} +  \|\partial_x u(t)\|_{L^2_{xy}} +\|v(t)\|_{L^2_{xy}} \leq C,
\]
for data in $E^1(\mathbb R^2)$. However, since we are assuming data in $ E^2(\mathbb R^2)$, we also have from Lemma \ref{boundedE2}
\[
\sup_{t\in\mathbb R}\| \partial_y u(t)\|_{L^2_{xy}} +  \|\partial_x^2 u(t)\|_{L^2_{xy}} +\| \partial_x^{-1}\partial_y v(t)\|_{L^2_{xy}} \leq C.
\]
These bounds are key to the proof of decay. Indeed, we will prove that

\begin{prop}\label{paso2}
Let $u\in E^2(\mathbb R^2)$ be a globally defined solution to KP-I. Then there exist constants $\sigma_1,C_1>0$ such that
\begin{equation}\label{estimate_v}
\begin{aligned}
&~{} \frac{\sigma_1}{t\log t}\int_{\mathbb{R}^{2}} v^{2}\left(\partial_{y}\Psi\right)\left(\frac{\tilde x}{\lambda_{3}(t)},\frac{\tilde y}{\lambda_{4}(t)}
\right)\mathrm{d}x\mathrm{d}y \\
&~{} \qquad \leq   -\frac{d}{dt}\mathcal J(t)  +  \frac{C_1}{t\log t}\int_{\mathbb{R}^{2}} |u|^{3} \phi\left(\frac{\tilde x}{\lambda_{3}(t)} \right) \phi \left(\frac{\tilde y}{\lambda_{4}(t)}\right) \mathrm{d}x\mathrm{d}y\\
&~{} \qquad \quad + \frac{C_1}{t} \int_{\mathbb{R}^{2}}u^2 \phi\left(\frac{\tilde x}{\lambda_{3}(t)} \right) \phi \left(\frac{\tilde y}{\lambda_{4}(t)}\right)\mathrm{d}x\mathrm{d}y  + \mathcal J_{int}(t),
\end{aligned}
\end{equation}
where $\mathcal J_{int}(t)\in L^1(\{ t\gg1\})$. 
\end{prop}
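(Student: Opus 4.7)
The plan is to compute $\tfrac{d}{dt}\mathcal J(t)$ directly; it splits into three pieces: a contribution from $\eta_2'(t)/\eta_2(t)^2$, one from $\partial_t[\Psi(\tilde x/\lambda_3(t),\tilde y/\lambda_4(t))]$ driven by the rates $\lambda_j'/\lambda_j$ and $\rho_j'/\lambda_j$, and the main flow contribution $\tfrac{1}{\eta_2(t)}\int[(\partial_t u)v + u(\partial_t v)]\Psi$. For the first two I would repeat the estimates of Proposition \ref{paso1}: the uniform bounds on $\|u\|_{L^2}$ and $\|v\|_{L^2}$ from \eqref{bounded1}, the rates $\eta_2'/\eta_2 \sim \lambda_j'/\lambda_j \sim 1/t$, and the restrictions $m_1,m_2<1$ in \eqref{cond_rho1rho2} make both pieces $L^1(\{t\gg1\})$. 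Substituting the KP-I equation into $\partial_t u$ and the companion equation $\partial_t v = -\partial_x^3 v + \partial_x^{-1}\partial_y^2 v - u\partial_x v$ (where the identification $\partial_x^{-1}\partial_y(u\partial_x u) = u\partial_x v$ uses $\partial_x v = \partial_y u$), the flow term breaks into dispersive, nonlinear, and nonlocal parts.

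For the dispersive part $-\tfrac{1}{\eta_2}\int[v\partial_x^3 u + u\partial_x^3 v]\Psi$, I would use the identity $v\partial_x^3 u + u\partial_x^3 v = \partial_x^3(uv) - 3\partial_x[(\partial_x u)(\partial_x v)]$ followed by integration by parts to obtain $\int uv\Psi_{xxx} - 3\int u_x v_x \Psi_x$. Since $|\Psi_{xxx}|\lesssim 1/\lambda_3^3$, $|\Psi_x|\lesssim 1/\lambda_3$, and $\|\partial_x u\|_{L^2}\|\partial_y u\|_{L^2}\lesssim 1$ by Lemma \ref{boundedE2}, both are $L^1$ in $t$. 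The nonlinear part $-\tfrac{1}{\eta_2}\int[uvu_x+u^2v_x]\Psi$ is rewritten via the algebraic identity $uvu_x + u^2 v_x = \tfrac12\partial_x(u^2 v) + \tfrac16\partial_y(u^3)$, where $u^2 v_x = u^2 u_y = \tfrac13\partial_y u^3$. Integration by parts then yields $\tfrac{1}{6\eta_2\lambda_4}\int u^3\phi\phi = \tfrac{1}{6t\log t}\int u^3\phi\phi$, exactly matching the $|u|^3$ term on the RHS of \eqref{estimate_v}, plus a remainder $\tfrac{1}{2\eta_2\lambda_3}\int u^2 v \phi'\psi$ estimated by $C\|u\|_{L^4}^2\|v\|_{L^2}/(\eta_2\lambda_3)\lesssim 1/(t\log^2 t)$ via Lemma \ref{Interpol} with $p=4$.

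The core of the argument is the nonlocal piece $\tfrac{1}{\eta_2}\int[v\partial_x^{-1}\partial_y^2 u + u\partial_x^{-1}\partial_y^2 v]\Psi$. Setting $w:=\partial_x^{-1}\partial_y v = \partial_x^{-2}\partial_y^2 u$, which belongs to $L^\infty_t L^2_{xy}$ precisely by \eqref{bounded2} (this is the crucial use of $E^2$ regularity), and writing $\partial_x^{-1}\partial_y^2 u = \partial_y v$, $\partial_x^{-1}\partial_y^2 v = \partial_y w$, several integrations by parts in $x$ and $y$ (using $\partial_y u = \partial_x v$ and $\partial_y v = \partial_x w$) yield the key identity
\[
\int[v\partial_x^{-1}\partial_y^2 u + u\partial_x^{-1}\partial_y^2 v]\Psi = -\int v^2\partial_y\Psi + \int vw\partial_x\Psi - \int uw\partial_y\Psi.
\]
Because $\partial_y\Psi = \tfrac{1}{\lambda_4}\phi(\cdot/\lambda_3)\phi(\cdot/\lambda_4)\geq 0$, the first term divided by $\eta_2$ is $-\tfrac{1}{t\log t}\int v^2 \phi\phi$, which moved to the LHS provides the good contribution in \eqref{estimate_v}. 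The second error $\int vw\partial_x\Psi$ is safely bounded by $C\|v\|_{L^2}\|w\|_{L^2}/(\eta_2\lambda_3)\lesssim 1/(t\log^2 t)\in L^1$.

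The hard part will be the last error $\tfrac{1}{\eta_2}\int uw\partial_y\Psi$: a direct Cauchy–Schwarz only gives $C/(\eta_2\lambda_4) = C/(t\log t)$, which fails to be integrable. The trick I plan to use is a logarithmically weighted Young inequality, $|uw|\leq (C_1\log t)\,u^2 + (4C_1\log t)^{-1} w^2$, leading to
\[
\frac{1}{\eta_2(t)}\left|\int uw\,\partial_y\Psi\right| \leq \frac{C_1}{t}\int u^2\phi\phi + \frac{C}{t\log^2 t}\|w\|_{L^2_{xy}}^2.
\]
The first summand is exactly the $\tfrac{C_1}{t}\int u^2\phi\phi$ term permitted on the RHS of \eqref{estimate_v}, and the second is $L^1(\{t\gg1\})$ by the uniform bound on $w$ from \eqref{bounded2}. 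Collecting all pieces and moving the $-\int v^2\partial_y\Psi/\eta_2$ term to the left finishes the proof; a minor technicality is that the equation for $v$ and the identification $\partial_x^{-1}\partial_y^2 v = \partial_y w$ need to be justified first on Schwartz approximations in $E^2$ and then passed to the limit by density using the Kenig global well-posedness in $E^2$.
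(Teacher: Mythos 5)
Your proposal is correct and follows essentially the same route as the paper's proof: the same splitting of $\frac{d}{dt}\mathcal J$, the same good-sign terms coming from $\int v\partial_y v\,\Psi$ and from the piece of $\int u\,\partial_x^{-1}\partial_y^2 v\,\Psi$ handled via $\partial_y u=\partial_x v$, the same $\log t$-weighted Young inequality producing the $\frac{C_1}{t}\int u^2\phi\phi$ term, and the same crucial use of the $E^2$ bound on $w=\partial_x^{-2}\partial_y^2u$. Your algebraic groupings ($v\partial_x^3u+u\partial_x^3v=\partial_x^3(uv)-3\partial_x(u_xv_x)$ and $uvu_x+u^2u_y=\tfrac12\partial_x(u^2v)+\tfrac16\partial_y(u^3)$) merely make explicit the cancellation of the $(\partial_xu)^2\,\partial_y\Psi$ terms that the paper observes after treating $\partial_t u\cdot v$ and $u\cdot\partial_t v$ separately.
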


Proving \eqref{estimate_v} for data only in $E^1(\mathbb R^2)$ remains an important open question. The  proof of Proposition \ref{paso2} will be carried out in next subsection.

\subsection{Virial computations}
We estimate the variation in time of the functional $\mathcal{J}.$
In this sense, we have that 
\begin{equation}\label{dJ_calculo}
\begin{split}
\frac{\mathrm{d}}{\mathrm{d}t} \mathcal J(t)= &~{} \frac{\mathrm{d}}{\mathrm{d}t}\left(\frac{1}{\eta_2(t)}\int_{\mathbb{R}^{2}}uv\Psi\left(\frac{\tilde x}{\lambda_{3}(t)},\frac{\tilde y}{\lambda_{4}(t)}
\right)\,\mathrm{d}x\,\mathrm{d}y\right)\\
= &~{} -\frac{\eta_2'(t)}{\eta_2^{2}(t)}\int_{\mathbb{R}^{2}}uv\Psi\left(\frac{\tilde x}{\lambda_{3}(t)},\frac{\tilde y}{\lambda_{4}(t)}
\right)\,\mathrm{d}x\,\mathrm{d}y \\
&~{} +\frac{1}{\eta_2(t)}\int_{\mathbb{R}^{2}}\partial_{t}\left(uv\right)\Psi\left(\frac{\tilde x}{\lambda_{3}(t)},\frac{\tilde y}{\lambda_{4}(t)}
\right)\,\mathrm{d}x\,\mathrm{d}y\\
&+\frac{1}{\eta_2(t)}\int_{\mathbb{R}^{2}}uv \partial_{t}\left(\Psi\left(\frac{\tilde x}{\lambda_{3}(t)},\frac{\tilde y}{\lambda_{4}(t)}\right)\right)\,\mathrm{d}x\,\mathrm{d}y\\
= : & ~{} \mathcal J_{1}(t)+\mathcal J_{2}(t)+\mathcal J_{3}(t).
\end{split}
\end{equation}
In the first place, we have  for $t\gg 1,$ the following: 
\begin{equation}\label{J1_final}
\begin{split}
|\mathcal J_{1}(t)|&=\left| -\frac{\eta_2'(t)}{\eta_2^{2}(t)}\int_{\mathbb{R}^{2}}uv\Psi\left(\frac{\tilde x}{\lambda_{3}(t)},\frac{\tilde y}{\lambda_{4}(t)}
\right)\,\mathrm{d}x\,\mathrm{d}y\right|\\
&\lesssim\frac{1}{t\eta_2(t)}\int_{\mathbb{R}^{2}}|uv|\Psi\left(\frac{\tilde x}{\lambda_{3}(t)},\frac{\tilde y}{\lambda_{4}(t)}
\right)\,\mathrm{d}x\,\mathrm{d}y\\
&\lesssim \frac{1}{t\eta_2(t)}\|u_{0}\|_{L^{2}_{xy}}\|v(t)\|_{L^{2}_{xy}} \lesssim  \frac{1}{t\eta_2(t)} \in L^1(\{t\gg 1\}),
\end{split}
\end{equation}  
because of \eqref{bounded1} and \eqref{def_eta2}. 

\medskip

Next, for $\mathcal J_{2}(t)$ we have that 
\begin{equation}\label{J2}
\begin{split}
\mathcal J_{2}(t)=&~{} \frac{1}{\eta_2(t)}\int_{\mathbb{R}^{2}}\partial_{t}\left(uv\right)\Psi\left(\frac{\tilde x}{\lambda_{3}(t)},\frac{\tilde y}{\lambda_{4}(t)}
\right)\,\mathrm{d}x\,\mathrm{d}y\\
=&~{}  \frac{1}{\eta_2(t)}\int_{\mathbb{R}^{2}}\partial_{t}u v\Psi\left(\frac{\tilde x}{\lambda_{3}(t)},\frac{\tilde y}{\lambda_{4}(t)}
\right)\,\mathrm{d}x\,\mathrm{d}y
\\
&~{} + \frac{1}{\eta_2(t)}\int_{\mathbb{R}^{2}}\partial_{t}v u\Psi\left(\frac{\tilde x}{\lambda_{3}(t)},\frac{\tilde y}{\lambda_{4}(t)}
\right)\,\mathrm{d}x\,\mathrm{d}y\\
=: & ~{} \mathcal J_{2,1}(t)+\mathcal J_{2,2}(t).
\end{split}
\end{equation}
Recall that if $u$ satisfies \eqref{KP:Eq}-I then $v=\partial_{x}^{-1}\partial_{y}u$ formally satisfies the equation
\begin{equation}\label{veq}
\partial_{t}v+\partial_{x}^{3}v-\partial_{x}^{-1}\partial_{y}^{2}v+u\partial_{y}u=0.
\end{equation}
It is clear that (after a density argument), using that $u$ solves \eqref{KP:Eq}-I,
\begin{equation}\label{J21}
\begin{split}
\mathcal J_{2,1}(t)=& \frac{1}{\eta_2(t)}\int_{\mathbb{R}^{2}}\partial_{t}u v\Psi\left(\frac{\tilde x}{\lambda_{3}(t)},\frac{\tilde y}{\lambda_{4}(t)}
\right)\mathrm{d}x\mathrm{d}y\\
=&\frac{1}{\eta_2(t)}\int_{\mathbb{R}^{2}}\left(\partial_{x}^{-1}\partial_{y}^{2}u-\partial_{x}^{3}u-u\partial_{x}u\right) v\Psi\left(\frac{\tilde x}{\lambda_{3}(t)},\frac{\tilde y}{\lambda_{4}(t)}
\right)\mathrm{d}x\mathrm{d}y\\
=& \frac{1}{\eta_2(t)}\int_{\mathbb{R}^{2}}\partial_{x}^{-1}\partial_{y}^{2}uv\Psi\left(\frac{\tilde x}{\lambda_{3}(t)},\frac{\tilde y}{\lambda_{4}(t)}
\right)\,\mathrm{d}x\,\mathrm{d}y \\
&~{} -\frac{1}{\eta_2(t)}\int_{\mathbb{R}^{2}}\partial_{x}^{3}u v\Psi\left(\frac{\tilde x}{\lambda_{3}(t)},\frac{\tilde y}{\lambda_{4}(t)}
\right)\mathrm{d}x\mathrm{d}y\\
&-\frac{1}{\eta_2(t)}\int_{\mathbb{R}^{2}}u\partial_{x}u v\Psi\left(\frac{\tilde x}{\lambda_{3}(t)},\frac{\tilde y}{\lambda_{4}(t)}
\right)\mathrm{d}x\mathrm{d}y\\
=&\mathcal J_{2,1,1}(t)+\mathcal J_{2,1,2}(t)+\mathcal J_{2,1,3}(t).
\end{split}
\end{equation}
For the term $\mathcal J_{2,1,1}(t)$, we have that 
\begin{equation}\label{J211}
\begin{split}
\mathcal J_{2,1,1}(t)=& ~{}\frac{1}{\eta_2(t)}\int_{\mathbb{R}^{2}} \partial_{y}v v\Psi\left(\frac{\tilde x}{\lambda_{3}(t)},\frac{\tilde y}{\lambda_{4}(t)}
\right)\,\mathrm{d}x\,\mathrm{d}y \\
=&~{} -\frac{1}{2\eta_2(t)\lambda_{4}(t)}\int_{\mathbb{R}^{2}}v^{2}\left(\partial_{y}\Psi\right)\left(\frac{\tilde x}{\lambda_{3}(t)},\frac{\tilde y}{\lambda_{4}(t)}
\right)\mathrm{d}x\mathrm{d}y.
\end{split}
\end{equation}
This term has a good sign, and it will be preserved until the end. 

\medskip

Instead, for $\mathcal J_{2,1,2}$, and integrating by parts once, we have 
\[
\begin{split}
\mathcal J_{2,1,2}(t) 
= &~{} \frac{1}{\eta_2(t)}\int_{\mathbb{R}^{2}}\partial_{x}^{2}u  \partial_x \left( v\Psi\left(\frac{\tilde x}{\lambda_{3}(t)},\frac{\tilde y}{\lambda_{4}(t)}
\right) \right)\mathrm{d}x\mathrm{d}y \\
= &~{} \frac{1}{\eta_2(t)}\int_{\mathbb{R}^{2}}\partial_{x}^{2}u  \partial_y u \Psi\left(\frac{\tilde x}{\lambda_{3}(t)},\frac{\tilde y}{\lambda_{4}(t)}
\right) \mathrm{d}x\mathrm{d}y \\
 &~{} + \frac{1}{\eta_2(t)\lambda_3(t)}\int_{\mathbb{R}^{2}}\partial_{x}^{2}u v  (\partial_x \Psi)\left(\frac{\tilde x}{\lambda_{3}(t)},\frac{\tilde y}{\lambda_{4}(t)} \right)\mathrm{d}x\mathrm{d}y.
\end{split}
\]
After another integration,
\[
\begin{split}
\mathcal J_{2,1,2}(t)= &~{} - \frac{1}{\eta_2(t)}\int_{\mathbb{R}^{2}}\partial_{x}u  \partial_{xy} u \Psi\left(\frac{\tilde x}{\lambda_{3}(t)},\frac{\tilde y}{\lambda_{4}(t)}
\right) \mathrm{d}x\mathrm{d}y \\
&~{} - \frac{2}{\eta_2(t)\lambda_3(t)}\int_{\mathbb{R}^{2}}\partial_{x}u  \partial_{y} u (\partial_x\Psi)\left(\frac{\tilde x}{\lambda_{3}(t)},\frac{\tilde y}{\lambda_{4}(t)}
\right) \mathrm{d}x\mathrm{d}y \\
  &~{} - \frac{1}{\eta_2(t)\lambda_3^2(t)}\int_{\mathbb{R}^{2}}\partial_{x} u v  (\partial_x^2 \Psi)\left(\frac{\tilde x}{\lambda_{3}(t)},\frac{\tilde y}{\lambda_{4}(t)} \right)\mathrm{d}x\mathrm{d}y.
\end{split}
\]
Rearranging terms,
\begin{equation}\label{J212}
\begin{split}
\mathcal J_{2,1,2}(t)
= &~{} \frac{1}{2\eta_2(t)\lambda_{4}(t)}\int_{\mathbb{R}^{2}}\left(\partial_{x}u\right)^{2}\left(\partial_{y}\Psi\right)\left(\frac{\tilde x}{\lambda_{3}(t)},\frac{\tilde y}{\lambda_{4}(t)}
\right)\mathrm{d}x\mathrm{d}y\\
&-\frac{2}{\eta_2(t)\lambda_{3}(t)}\int_{\mathbb{R}^{2}}\partial_{x}u\partial_{y}u\left(\partial_{x}\Psi\right)\left(\frac{\tilde x}{\lambda_{3}(t)},\frac{\tilde y}{\lambda_{4}(t)}
\right)\mathrm{d}x\mathrm{d}y\\
&-\frac{1}{2\eta_2(t)\lambda_{3}^{2}(t)\lambda_{4}(t)}\int_{\mathbb{R}^{2}}u^{2}\left(\partial_{x}^{2}\partial_{y}\Psi\right)\left(\frac{\tilde x}{\lambda_{3}(t)},\frac{\tilde y}{\lambda_{4}(t)}
\right)\mathrm{d}x\mathrm{d}y\\
&+\frac{1}{\eta_2(t)\lambda_{3}^{3}(t)}\int_{\mathbb{R}^{2}}uv \left(\partial_{x}^3\Psi\right)\left(\frac{\tilde x}{\lambda_{3}(t)},\frac{\tilde y}{\lambda_{4}(t)}
\right)\mathrm{d}x\mathrm{d}y\\
=:& ~{} \mathcal J_{2,1,2,1}(t) +\mathcal J_{2,1,2,2}(t) +\mathcal J_{2,1,2,3}(t) +\mathcal J_{2,1,2,4}(t).
\end{split}
\end{equation} 
In what follows, we estimate $\mathcal J_{2,1,2,2}(t)$, $\mathcal J_{2,1,2,3}(t)$ and $\mathcal J_{2,1,2,4}(t)$, which are bad terms here. On the other hand, $\mathcal J_{2,1,2,1}(t)$ is a good term, to be saved for later.

\medskip

First of all, using \eqref{condiciones1} and \eqref{bounded2}, and Cauchy-Schwarz,
\begin{equation}\label{J2122}
|\mathcal J_{2,1,2,2}(t)| \lesssim \frac{1}{\eta_2(t)\lambda_{3}(t)} \|\partial_{x}u(t)\|_{L^2_{xy}} \|\partial_{y}u(t)\|_{L^2_{xy}} \lesssim \frac{1}{\eta_2(t)\lambda_{3}(t)}  \in L^1(\{t\gg 1\}).
\end{equation}
Similarly, thanks to the previous estimate and \eqref{bounded1},
\begin{equation}\label{J2123}
|\mathcal J_{2,1,2,3}(t)| \lesssim \frac{1}{\eta_2(t)\lambda_3^{2}(t)\lambda_{4}(t)} \in L^1(\{t\gg 1\}),
\end{equation}
and 
\begin{equation*}\label{J2124}
|\mathcal J_{2,1,2,4}(t)| \lesssim \frac{1}{\eta_2(t)\lambda_{3}^{3}(t)} \in L^1(\{t\gg 1\}).
\end{equation*}
We conclude from \eqref{J212} that
\begin{equation}\label{J212_new}
\begin{split}
\mathcal J_{2,1,2}(t)= &~{} \frac{1}{2\eta_2(t)\lambda_{4}(t)}\int_{\mathbb{R}^{2}}\left(\partial_{x}u\right)^{2}\left(\partial_{y}\Psi\right)\left(\frac{\tilde x}{\lambda_{3}(t)},\frac{\tilde y}{\lambda_{4}(t)}
\right)\mathrm{d}x\mathrm{d}y + \mathcal J_{2,1,2,int}(t),
\end{split}
\end{equation} 
with. $\mathcal J_{2,1,2,int}(t)\in L^1(\{t\gg 1\})$.
 
\medskip 

Coming back to \eqref{J21}, one has
\begin{equation*}
\begin{split}
\mathcal J_{2,1,3}(t)=& ~{} -\frac{1}{\eta_2(t)}\int_{\mathbb{R}^{2}}u\partial_{x}u v\Psi\left(\frac{\tilde x}{\lambda_{3}(t)},\frac{\tilde y}{\lambda_{4}(t)}
\right)\mathrm{d}x\mathrm{d}y\\
= & ~{} \frac{1}{2\eta_2(t)}\int_{\mathbb{R}^{2}}u^2 \partial_x \left( v\Psi\left(\frac{\tilde x}{\lambda_{3}(t)},\frac{\tilde y}{\lambda_{4}(t)}
\right) \right)\mathrm{d}x\mathrm{d}y\\
= & ~{} -\frac{1}{6\eta_2(t)\lambda_{4}(t)}\int_{\mathbb{R}^{2}}u^{3}\left(\partial_{y}\Psi\right)\left(\frac{\tilde x}{\lambda_{3}(t)},\frac{\tilde y}{\lambda_{4}(t)}
\right)\mathrm{d}x\mathrm{d}y\\
&+\frac{1}{2\eta_2(t)\lambda_{3}(t)}\int_{\mathbb{R}^{2}}u^{2}v\left(\partial_{x}\Psi\right)\left(\frac{\tilde x}{\lambda_{3}(t)},\frac{\tilde y}{\lambda_{4}(t)}
\right)\mathrm{d}x\mathrm{d}y.
\end{split}
\end{equation*}
The last term above is actually integrable in time. Indeed, by Cauchy's inequality and \eqref{condiciones1},
\[
\begin{split}
&\left| \frac{1}{2\eta_2(t)\lambda_{3}(t)}\int_{\mathbb{R}^{2}}u^{2}v\left(\partial_{x}\Psi\right)\left(\frac{\tilde x}{\lambda_{3}(t)},\frac{\tilde y}{\lambda_{4}(t)}
\right)\mathrm{d}x\mathrm{d}y \right|\\
 &\leq  \frac{1}{4\eta_2(t)\lambda_{3}(t)} \left\{\int_{\mathbb{R}^{2}}u^{4}\left(\partial_{x}\Psi\right)\left(\frac{\tilde x}{\lambda_{3}(t)},\frac{\tilde y}{\lambda_{4}(t)}
 \right)\mathrm{d}x\mathrm{d}y +\int_{\mathbb{R}^{2}} v^{2}\left(\partial_{x}\Psi\right)\left(\frac{\tilde x}{\lambda_{3}(t)},\frac{\tilde y}{\lambda_{4}(t)}
 \right)\mathrm{d}x\mathrm{d}y  \right\} \\
 \lesssim &~{}  \frac{C}{\eta_2(t)\lambda_{3}(t)} \in L^1(\{t\gg1\}),
\end{split}
\]
thanks to Lemma \ref{Interpol} and \eqref{bounded1}.

Hence, after gathering all the terms we get   
\begin{equation}\label{J21_final}
\begin{split}
\mathcal J_{2,1}(t)
=&~{} -\frac{1}{2\eta_2(t)\lambda_{4}(t)}\int_{\mathbb{R}^{2}} v^{2}\left(\partial_{y}\Psi\right)\left(\frac{\tilde x}{\lambda_{3}(t)},\frac{\tilde y}{\lambda_{4}(t)}
\right)\mathrm{d}x\mathrm{d}y\\
& +\frac{1}{2\eta_2(t)\lambda_{4}(t)}\int_{\mathbb{R}^{2}}\left(\partial_{x}u\right)^{2}\left(\partial_{y}\Psi\right)\left(\frac{\tilde x}{\lambda_{3}(t)},\frac{\tilde y}{\lambda_{4}(t)}\right)\mathrm{d}x\mathrm{d}y\\
&- \frac{1}{6\eta_2(t)\lambda_{4}(t)}\int_{\mathbb{R}^{2}}u^{3}\left(\partial_{y}\Psi\right)\left(\frac{\tilde x}{\lambda_{3}(t)},\frac{\tilde y}{\lambda_{4}(t)}\right)\mathrm{d}x\mathrm{d}y+ \mathcal J_{2,1,int}(t),
\end{split}
\end{equation}
where $\mathcal J_{2,1,int}(t)\in L^1(\{ t\gg1\})$. 

\medskip

Now, we consider the term $\mathcal J_{2,2}(t)$ in \eqref{J2}. By virtue of \eqref{veq} we have that 
\begin{equation}\label{J22}
\begin{split}
\mathcal J_{2,2}(t)= &~{} \frac{1}{\eta_2(t)}\int_{\mathbb{R}^{2}}u\partial_{t}v\Psi\left(\frac{\tilde x}{\lambda_{3}(t)},\frac{\tilde y}{\lambda_{4}(t)}\right)\mathrm{d}x\mathrm{d}y\\
=&~{} \frac{1}{\eta_2(t)}\int_{\mathbb{R}^{2}}u\left(-\partial_{x}^{3}v+\partial_{x}^{-1}\partial_{y}^{2}v-u\partial_{y}u\right)\Psi\left(\frac{\tilde x}{\lambda_{3}(t)},\frac{\tilde y}{\lambda_{4}(t)}\right)\mathrm{d}x\mathrm{d}y\\
=&~{} -\frac{1}{\eta_2(t)}\int_{\mathbb{R}^{2}}u\partial_{x}^{3}v\Psi\left(\frac{\tilde x}{\lambda_{3}(t)},\frac{\tilde y}{\lambda_{4}(t)}\right)\mathrm{d}x\mathrm{d}y\\ 
&+\frac{1}{\eta_2(t)}\int_{\mathbb{R}^{2}}u\partial_{x}^{-1}\partial_{y}^{2}v\Psi\left(\frac{\tilde x}{\lambda_{3}(t)},\frac{\tilde y}{\lambda_{4}(t)}\right)\mathrm{d}x\mathrm{d}y\\
&-\frac{1}{\eta_2(t)}\int_{\mathbb{R}^{2}}u^{2}\partial_{y}u\Psi\left(\frac{\tilde x}{\lambda_{3}(t)},\frac{\tilde y}{\lambda_{4}(t)}\right)\mathrm{d}x\mathrm{d}y\\
=: &~{} \mathcal J_{2,2,1}(t)+\mathcal J_{2,2,2}(t)+\mathcal J_{2,2,3}(t).
\end{split}
\end{equation}
First of all, by integration by parts and using $\partial_x v =\partial_y u$ we obtain
\[
\begin{aligned}
\mathcal J_{2,2,1}(t)= &~{}  \frac{1}{\eta_2(t)}\int_{\mathbb{R}^{2}} \partial_x u\partial_{xy}u\Psi\left(\frac{\tilde x}{\lambda_{3}(t)},\frac{\tilde y}{\lambda_{4}(t)}\right)\mathrm{d}x\mathrm{d}y\\
&~{} +\frac{1}{\eta_2(t)\lambda_3(t)}\int_{\mathbb{R}^{2}}u\partial_{xy}u (\partial_x\Psi)\left(\frac{\tilde x}{\lambda_{3}(t)},\frac{\tilde y}{\lambda_{4}(t)}\right)\mathrm{d}x\mathrm{d}y,
\end{aligned}
\]
so that,
\begin{equation}\label{J221}
\begin{split}
\mathcal J_{2,2,1}(t)&=-\frac{1}{2\eta_2(t)\lambda_{4}(t)}\int_{\mathbb{R}^{2}}\left(\partial_{x}u\right)^{2}\left(\partial_{y}\Psi\right)\left(\frac{\tilde x}{\lambda_{3}(t)},\frac{\tilde y}{\lambda_{4}(t)}\right)\mathrm{d}x\mathrm{d}y\\
&\quad -\frac{1}{\eta_2(t)\lambda_{3}(t)}\int_{\mathbb{R}^{2}}\partial_{y}u\partial_{x}u\left(\partial_{x}\Psi\right)\left(\frac{\tilde x}{\lambda_{3}(t)},\frac{\tilde y}{\lambda_{4}(t)}\right)\mathrm{d}x\mathrm{d}y\\
&\quad +\frac{1}{2\eta_2(t)\lambda_{3}(t)\lambda_{4}^{2}(t)}\int_{\mathbb{R}^{2}}u^{2}\left(\partial_{y}^{2}\partial_{x}\Psi\right)\left(\frac{\tilde x}{\lambda_{3}(t)},\frac{\tilde y}{\lambda_{4}(t)}\right)\mathrm{d}x\mathrm{d}y.
\end{split}
\end{equation}
Performing similar estimates as in \eqref{J2122} and \eqref{J2123}, one can conclude that $\mathcal J_{2,2,1}(t)$ in \eqref{J221} follows the decomposition
\begin{equation}\label{J221_final}
\mathcal J_{2,2,1}(t)=-\frac{1}{2\eta_2(t)\lambda_{4}(t)}\int_{\mathbb{R}^{2}}\left(\partial_{x}u\right)^{2}\left(\partial_{y}\Psi\right)\left(\frac{\tilde x}{\lambda_{3}(t)},\frac{\tilde y}{\lambda_{4}(t)}\right)\mathrm{d}x\mathrm{d}y  + \mathcal J_{2,2,1,int}(t),
\end{equation}
with $\mathcal J_{2,2,1,int}(t)\in L^1(\{t\gg1\}$. Note that the first term above and the one in \eqref{J212_new} cancels out. Additionally,
\begin{equation}\label{J222}
\begin{split}
\mathcal J_{2,2,2}(t)
=&~{} \frac{1}{\eta_2(t)}\int_{\mathbb{R}^{2}}u\partial_{x}^{-1}\partial_{y}^{2}v\Psi\left(\frac{\tilde x}{\lambda_{3}(t)},\frac{\tilde y}{\lambda_{4}(t)}\right)\mathrm{d}x\mathrm{d}y\\
=&~{} -\frac{1}{\eta_2(t)}\int_{\mathbb{R}^{2}}\partial_{y}u\partial_{x}^{-1}\partial_{y}v\Psi\left(\frac{\tilde x}{\lambda_{3}(t)},\frac{\tilde y}{\lambda_{4}(t)}\right)\mathrm{d}x\mathrm{d}y\\
& -\frac{1}{\eta_2(t)\lambda_{4}(t)}\int_{\mathbb{R}^{2}}u\partial_{x}^{-1}\partial_{y}v\left(\partial_{y}\Psi\right)\left(\frac{\tilde x}{\lambda_{3}(t)},\frac{\tilde y}{\lambda_{4}(t)}\right)\mathrm{d}x\mathrm{d}y\\
=:&~{} \mathcal J_{2,2,2,1}(t)+\mathcal J_{2,2,2,2}(t).
\end{split}
\end{equation}
Using that $\partial_{y}u = \partial_x v$, and integrating by parts, the term $\mathcal J_{2,2,2,1}(t)$ is bounded as follows:
\[
\begin{split}
\mathcal J_{2,2,2,1}(t) =&~{} \frac{1}{\eta_2(t)\lambda_3(t)}\int_{\mathbb{R}^{2}}v\partial_{x}^{-1}\partial_{y}v(\partial_x\Psi)\left(\frac{\tilde x}{\lambda_{3}(t)},\frac{\tilde y}{\lambda_{4}(t)}\right)\mathrm{d}x\mathrm{d}y \\
& + \frac{1}{\eta_2(t)}\int_{\mathbb{R}^{2}} v \partial_{y}v\Psi\left(\frac{\tilde x}{\lambda_{3}(t)},\frac{\tilde y}{\lambda_{4}(t)}\right)\mathrm{d}x\mathrm{d}y.
\end{split}
\]
The first term on the RHS above satisfies
\[
\lesssim \frac{1}{\eta_2(t)\lambda_3(t)} \|v(t)\|_{L^2_{xy}} \|\partial_{x}^{-1}\partial_{y}v (t)\|_{L^2_{xy}} \lesssim  \frac{1}{\eta_2(t)\lambda_3(t)} \in L^1(\{t\gg1\}),
\]
thanks to \eqref{bounded2} and \eqref{condiciones1}. We conclude
\begin{equation}\label{J2221}
\begin{split}
\mathcal J_{2,2,2,1}(t) =- \frac{1}{2\eta_2(t) \lambda_4(t)}\int_{\mathbb{R}^{2}} v^2 (\partial_y\Psi)\left(\frac{\tilde x}{\lambda_{3}(t)},\frac{\tilde y}{\lambda_{4}(t)}\right)\mathrm{d}x\mathrm{d}y + \mathcal J_{2,2,2,1,int}(t),
\end{split}
\end{equation}
with $\mathcal J_{2,2,2,1,int}(t)\in L^1(\{t\gg 1\})$. This term adds up to the one in \eqref{J211}. 

\medskip

On the other hand, $\mathcal J_{2,2,2,2}(t)$ in \eqref{J222} is treated as follows. Using Young's inequality,
\begin{equation*}
\begin{split}
|\mathcal J_{2,2,2,2}(t)| \leq &~{}  \frac{\log t}{2\eta_2(t)\lambda_{4}(t)}\int_{\mathbb{R}^{2}}u^2 \left(\partial_{y}\Psi\right)\left(\frac{\tilde x}{\lambda_{3}(t)},\frac{\tilde y}{\lambda_{4}(t)}\right)\mathrm{d}x\mathrm{d}y\\
& + \frac{1}{2\log t \, \eta_2(t)\lambda_{4}(t)}\int_{\mathbb{R}^{2}}(\partial_{y}\partial_{x}^{-1}v)^2 \left(\partial_{y}\Psi\right)\left(\frac{\tilde x}{\lambda_{3}(t)},\frac{\tilde y}{\lambda_{4}(t)}\right)\mathrm{d}x\mathrm{d}y.
\end{split}
\end{equation*}
The second term above integrates in time. Indeed, from \eqref{bounded2} and \eqref{producto},
\[
\begin{aligned}
& \frac{1}{2\log t \, \eta_2(t)\lambda_{4}(t)}\int_{\mathbb{R}^{2}}(\partial_{y}\partial_{x}^{-1}v)^2 \left(\partial_{y}\Psi\right)\left(\frac{\tilde x}{\lambda_{3}(t)},\frac{\tilde y}{\lambda_{4}(t)}\right)\mathrm{d}x\mathrm{d}y \\
& \qquad \lesssim \frac{1}{t\log^2 t}\sup_{t\gg1} \|\partial_{x}^{-1}\partial_{y}v(t)\|_{L^2_{xy}}^2 \in L^1(\{t\gg1\}).
\end{aligned}
\]
Therefore,
\begin{equation}\label{J2222}
\mathcal J_{2,2,2,2}(t) \lesssim  \frac{\log t}{\eta_2(t)\lambda_{4}(t)}\int_{\mathbb{R}^{2}}u^2 \left(\partial_{y}\Psi\right)\left(\frac{\tilde x}{\lambda_{3}(t)},\frac{\tilde y}{\lambda_{4}(t)}\right)\mathrm{d}x\mathrm{d}y +  \mathcal J_{2,2,2,2,int }(t) , 
\end{equation}
with $\mathcal J_{2,2,2,2,int}(t) \in L^1(\{t\gg 1\})$. We conclude from \eqref{J2221} and \eqref{J2222}
\begin{equation}\label{J222_final}
\begin{aligned}
\mathcal J_{2,2,2}(t)  \leq &~{}  - \frac{1}{2\eta_2(t) \lambda_4(t)}\int_{\mathbb{R}^{2}} v^2 (\partial_y\Psi)\left(\frac{\tilde x}{\lambda_{3}(t)},\frac{\tilde y}{\lambda_{4}(t)}\right)\mathrm{d}x\mathrm{d}y \\
&~{} +\frac{c\log t}{\eta_2(t)\lambda_{4}(t)}\int_{\mathbb{R}^{2}}u^2 \left(\partial_{y}\Psi\right)\left(\frac{\tilde x}{\lambda_{3}(t)},\frac{\tilde y}{\lambda_{4}(t)}\right)\mathrm{d}x\mathrm{d}y + \mathcal J_{2,2,2,int}(t),
\end{aligned}
\end{equation}
with $\mathcal J_{2,2,2,int}(t)\in L^1(\{t\gg 1\})$ and some constant $c>0$.

\medskip

Now, we deal with $\mathcal J_{2,2,3}(t)$ in \eqref{J22}. Integrating by parts,
\begin{equation}\label{J223}
\begin{split}
\mathcal J_{2,2,3}(t)&=-\frac{1}{\eta_2(t)}\int_{\mathbb{R}^{2}}u^{2}\partial_{y}u\Psi\left(\frac{\tilde x}{\lambda_{3}(t)},\frac{\tilde y}{\lambda_{4}(t)}\right)\mathrm{d}x\mathrm{d}y\\
&=\frac{1}{3\eta_2(t) \lambda_{4}(t)}\int_{\mathbb{R}^{2}}u^{3}\left(\partial_{y}\Psi\right)\left(\frac{\tilde x}{\lambda_{3}(t)},\frac{\tilde y}{\lambda_{4}(t)}\right)\mathrm{d}x\mathrm{d}y.
\end{split}
\end{equation}
We conclude from \eqref{J221_final}, \eqref{J222_final} and \eqref{J223} that
\begin{equation*}\label{J22_final}
\begin{aligned}
\mathcal J_{2,2}(t)\leq &~{} -\frac{1}{2\eta_2(t)\lambda_{4}(t)}\int_{\mathbb{R}^{2}}\left(\partial_{x}u\right)^{2}\left(\partial_{y}\Psi\right)\left(\frac{\tilde x}{\lambda_{3}(t)},\frac{\tilde y}{\lambda_{4}(t)}\right)\mathrm{d}x\mathrm{d}y  \\
&~{} - \frac{1}{2\eta_2(t) \lambda_4(t)}\int_{\mathbb{R}^{2}} v^2 (\partial_y\Psi)\left(\frac{\tilde x}{\lambda_{3}(t)},\frac{\tilde y}{\lambda_{4}(t)}\right)\mathrm{d}x\mathrm{d}y \\
&~{} + \frac{1}{3\eta_2(t) \lambda_{4}(t)}\int_{\mathbb{R}^{2}}u^{3}\left(\partial_{y}\Psi\right)\left(\frac{\tilde x}{\lambda_{3}(t)},\frac{\tilde y}{\lambda_{4}(t)}\right)\mathrm{d}x\mathrm{d}y  + \mathcal J_{2,2,int}(t),
\end{aligned}
\end{equation*}
with $\mathcal J_{2,2,2,int}(t)\in L^1(\{t\gg 1\})$. Adding this result to the one in \eqref{J21_final} for $\mathcal J_{2,1}(t)$, we conclude
\begin{equation}\label{J2_final}
\begin{split}
\mathcal J_{2}(t)
\leq &~{} -\frac{1}{\eta_2(t)\lambda_{4}(t)}\int_{\mathbb{R}^{2}} v^{2}\left(\partial_{y}\Psi\right)\left(\frac{\tilde x}{\lambda_{3}(t)},\frac{\tilde y}{\lambda_{4}(t)}
\right)\mathrm{d}x\mathrm{d}y\\
&+ \frac{1}{6\eta_2(t)\lambda_{4}(t)}\int_{\mathbb{R}^{2}}u^{3}\left(\partial_{y}\Psi\right)\left(\frac{\tilde x}{\lambda_{3}(t)},\frac{\tilde y}{\lambda_{4}(t)}\right)\mathrm{d}x\mathrm{d}y+ \mathcal J_{2,int}(t),
\end{split}
\end{equation}
where $\mathcal J_{2,int}(t)\in L^1(\{ t\gg1\})$.

\medskip

Finally, we have  that $\mathcal J_{3}(t)$ in \eqref{dJ_calculo} satisfies
\[
\begin{split}
\mathcal J_{3}(t)=&~{} \frac{1}{\eta_2(t)}\int_{\mathbb{R}^{2}}uv \partial_{t}\left(\Psi\left(\frac{\tilde x}{\lambda_{3}(t)},\frac{\tilde y}{\lambda_{4}(t)}\right)\right)\,\mathrm{d}x\,\mathrm{d}y\\
=&~{}  -\frac{1}{\eta_2(t)}\int_{\mathbb{R}^{2}} uv \nabla \Psi\cdot \left(\frac{\tilde x}{\lambda_{3}(t)}\left(\frac{\lambda_{3}'(t)}{\lambda_{3}(t)}\right),\frac{\tilde y}{\lambda_{4}(t)}\left(\frac{\lambda_{4}'(t)}{\lambda_{4}(t)}\right)\right)\,\mathrm{d}x\,\mathrm{d}y\\
&~{} -\frac{\rho_{1}'(t)}{\eta_2(t)\lambda_{3}(t)}\int_{\mathbb{R}^{2}} uv \left(\partial_{x}\Psi\right)\left(\frac{\tilde x}{\lambda_{3}(t)},\frac{\tilde y}{\lambda_{4}(t)}\right)\,\mathrm{d}x\,\mathrm{d}y\\
&~{} -\frac{\rho_{2}'(t)}{\eta_2(t)\lambda_{4}(t)}\int_{\mathbb{R}^{2}} uv \left(\partial_{y}\Psi\right)\left(\frac{\tilde x}{\lambda_{3}(t)},\frac{\tilde y}{\lambda_{4}(t)}\right)\,\mathrm{d}x\,\mathrm{d}y.
\end{split}
\]
Expanding terms,
\[
\begin{split}
\mathcal J_{3}(t)
=&~{} -\frac{\lambda_{3}'(t)}{\eta_2(t)\lambda_{3}(t)}\int_{\mathbb{R}^{2}} uv \left(\partial_{x}\Psi\right)\left(\frac{\tilde x}{\lambda_{3}(t)},\frac{\tilde y}{\lambda_{4}(t)}\right)\left(\frac{\tilde x}{\lambda_{3}(t)}\right)\,\mathrm{d}x\,\mathrm{d}y\\
&~{} -\frac{\lambda_{4}'(t)}{\eta_2(t)\lambda_{4}(t)}\int_{\mathbb{R}^{2}} uv \left(\partial_{y}\Psi\right)\left(\frac{\tilde x}{\lambda_{3}(t)},\frac{\tilde y}{\lambda_{4}(t)}\right)\left(\frac{\tilde y}{\lambda_{4}(t)}\right)\,\mathrm{d}x\,\mathrm{d}y\\
&~{} -\frac{\rho_{1}'(t)}{\eta_2(t)\lambda_{3}(t)}\int_{\mathbb{R}^{2}} uv \left(\partial_{x}\Psi\right)\left(\frac{\tilde x}{\lambda_{3}(t)},\frac{\tilde y}{\lambda_{4}(t)}\right)\,\mathrm{d}x\,\mathrm{d}y\\
&~{} -\frac{\rho_{2}'(t)}{\eta_2(t)\lambda_{4}(t)}\int_{\mathbb{R}^{2}} uv \left(\partial_{y}\Psi\right)\left(\frac{\tilde x}{\lambda_{3}(t)},\frac{\tilde y}{\lambda_{4}(t)}\right)\,\mathrm{d}x\,\mathrm{d}y\\
=:&~{} \mathcal J_{3,1}(t) + \mathcal J_{3,2}(t) + \mathcal J_{3,3}(t)+ \mathcal J_{3,4}(t).
\end{split}
\]
The terms $ \mathcal J_{3,1}(t)$ and $ \mathcal J_{3,2}(t)$ are easy to control. Indeed, we have then from \eqref{bounded1}, \eqref{def_eta2} that combined with  the Cauchy-Schwarz inequality yield
\[
|\mathcal J_{3,1}(t)| \lesssim \frac{1}{t \eta_2(t)} \in L^1(\{ t\gg1\}),
\]
and similar with $\mathcal J_{3,2}(t)$ since $b<1.$ On the other hand, $\mathcal J_{3,3}(t)$ and $\mathcal J_{3,4}(t)$ require some care. Using \eqref{def_eta2} and \eqref{cond_rho1rho2},
\[
\left| \mathcal J_{3,3}(t)\right| =\left|  \frac{\rho_{1}'(t)}{\eta_2(t)\lambda_{3}(t)}\int_{\mathbb{R}^{2}} uv \left(\partial_{x}\Psi\right)\left(\frac{\tilde x}{\lambda_{3}(t)},\frac{\tilde y}{\lambda_{4}(t)}\right)\,\mathrm{d}x\,\mathrm{d}y \right| \lesssim \frac{|\ell_1 m_1| t^{m_1-1}}{t \log^2 t} \in L^1(\{t\gg 1\}), 
\]
since $m_1<1$. The estimate for $\mathcal J_{3,4}(t)$ is completely analogous and we skip it. We conclude
\begin{equation}\label{J3_final}
\mathcal J_{3}(t) \in L^1(\{t\gg1\}).
\end{equation}
Gathering estimates \eqref{J1_final}, \eqref{J2_final}, and \eqref{J3_final}, we conclude in \eqref{dJ_calculo} that 
\[
\begin{aligned}
\frac{d}{dt}\mathcal J(t) \leq &~{} -\frac{1}{\eta_2(t)\lambda_{4}(t)}\int_{\mathbb{R}^{2}} v^{2}\left(\partial_{y}\Psi\right)\left(\frac{\tilde x}{\lambda_{3}(t)},\frac{\tilde y}{\lambda_{4}(t)}
\right)\mathrm{d}x\mathrm{d}y\\
&~{} + \frac{C}{\eta_2(t)\lambda_{4}(t)}\int_{\mathbb{R}^{2}}|u|^{3}\left(\partial_{y}\Psi\right)\left(\frac{\tilde x}{\lambda_{3}(t)},\frac{\tilde y}{\lambda_{4}(t)}\right)\mathrm{d}x\mathrm{d}y \\
&~{} + \frac{C\log t}{\eta_2(t)\lambda_{4}(t)}\int_{\mathbb{R}^{2}}u^2 \left(\partial_{y}\Psi\right)\left(\frac{\tilde x}{\lambda_{3}(t)},\frac{\tilde y}{\lambda_{4}(t)}\right)\mathrm{d}x\mathrm{d}y + \mathcal J_{int}(t),
\end{aligned}
\]
where $\mathcal J_{int}(t)\in L^1(\{ t\gg1\})$. Using the identity (see \eqref{def_phi2})
\begin{equation}\label{derivada}
\left(\partial_{y}\Psi\right)\left(\frac{\tilde x}{\lambda_{3}(t)},\frac{\tilde y}{\lambda_{4}(t)}
\right) = \phi\left(\frac{\tilde x}{\lambda_{3}(t)} \right) \phi \left(\frac{\tilde y}{\lambda_{4}(t)}\right),
\end{equation}
and the fact that $\eta_2(t)\lambda_{4}(t)=t\log t$, we conclude \eqref{estimate_v}.

\bigskip

\section{$L^2$ decay in KP-I and KP-II}\label{Sect:3}

This section is devoted to the proof of Theorem \ref{KP_decay} and by extension the proof of Theorem \ref{thm_KPI}, which will be proved in next section. We consider both cases KP-I and KP-II, since the proof does not depend on the sign in the equation \eqref{KP:Eq}, only on the well-posedness theory available for each model.

\medskip

Theorem \ref{KP_decay} will be a consequence of the following integrability result. Recall the set $\Omega_1(t)$ already introduced in \eqref{Omega}:
\begin{equation*}\label{Omega_new}
\Omega_1(t)= \left\{ (x,y)\in\mathbb R^2 ~: ~  |x-\ell_1 t^{m_1}| \leq t^{b}, \; |y-\ell_2 t^{m_2}|\leq t^{br}\right\},
\end{equation*}
with $\frac53<r<3$, $0<b <\frac2{3+r}$, $0\leq m_1<1-\frac{b}2(r+1)$, and $0\leq m_2 < 1-\frac12b(q+2-r)$.

\begin{thm}\label{lem:d2L2}
Assume that $u_0\in L^2(\mathbb R^2)$ in the KP-II case, and $u_0\in E^1(\mathbb R^2)$ in the KP-I case. Let $u=u(x,y,t)$ be the corresponding unique solution of the IVP \eqref{KP:Eq} with $\kappa=\pm1$. Then, there exists a constant $c>0,$ such that
\begin{equation*}\label{IntegraL2}
\begin{split}
&\int_{\{t\gg1\}} \frac{1}{t}	
\left(\int_{\Omega_1(t)} u^{2}(x,y,t) \,\mathrm{d}x\mathrm{d}y\right)\mathrm{d}t \leq c.
\end{split}
\end{equation*}
\end{thm}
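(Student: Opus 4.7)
The proof rests on the $L^{1}$-type virial estimate \eqref{I_intro} of Proposition \ref{le:dT}, applied to the functional $\mathcal{I}(t)$ with parameters chosen as in Subsection \ref{Scalings}: take $q=1+\varepsilon_{0}$ with $\varepsilon_{0}>0$ small enough that \eqref{bqconditions_new} holds for the prescribed $(b,r)$ in \eqref{indices}, and shifts $\rho_{1}(t)=\ell_{1}t^{m_{1}}$, $\rho_{2}(t)=\ell_{2}t^{m_{2}}$ as in \eqref{rho12} satisfying \eqref{cond_rho1rho2}.

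First I would integrate \eqref{I_intro} in time from some $t_{0}\gg 1$ up to $T$, obtaining
\[
\sigma_{2}\int_{t_{0}}^{T}\frac{1}{t}\int_{\mathbb{R}^{2}} u^{2}\,\sech^{2}\!\Big(\tfrac{\tilde x}{\lambda_{5}(t)}\Big)\sech^{2}\!\Big(\tfrac{\tilde y}{\lambda_{6}(t)}\Big)\,\mathrm{d}x\,\mathrm{d}y\,\mathrm{d}t\leq \mathcal{I}(T)-\mathcal{I}(t_{0})+\int_{t_{0}}^{T}\mathcal{I}_{int}(t)\,\mathrm{d}t.
\]
The last term is uniformly bounded in $T$ since $\mathcal{I}_{int}\in L^{1}(\{t\gg 1\})$. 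For $\mathcal{I}(T)$ itself, Cauchy-Schwarz together with $\|\psi\|_{L^{\infty}}\leq 3$ (from \eqref{sigmabound}), the conservation of the $L^{2}$-mass \eqref{mass} (respectively \eqref{bounded1} for KP-I), and the scaling identity $\|\phi(\cdot/a)\|_{L^{2}}\lesssim a^{1/2}$ yield
\[
|\mathcal{I}(t)|\lesssim \frac{\lambda_{5}^{q/2}(t)\,\lambda_{6}^{1/2}(t)}{\eta_{3}(t)}\,\|u(t)\|_{L^{2}_{xy}}\lesssim t^{\frac{b(q+r+2)}{2}-1}\,(\log t)^{-\frac{q+r+2}{2}}.
\]
The restriction $b\le \frac{2}{2+q+r}$ in \eqref{bqconditions_new} renders the exponent of $t$ nonpositive, so $\sup_{t\geq t_{0}}|\mathcal{I}(t)|\leq C$. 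Passing $T\to\infty$ therefore produces the uniform weighted bound
\[
\int_{t_{0}}^{\infty}\frac{1}{t}\int_{\mathbb{R}^{2}} u^{2}\,\sech^{2}\!\Big(\tfrac{\tilde x}{\lambda_{5}(t)}\Big)\sech^{2}\!\Big(\tfrac{\tilde y}{\lambda_{6}(t)}\Big)\,\mathrm{d}x\,\mathrm{d}y\,\mathrm{d}t\leq C.
\]

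To pass from this weighted bound to $\int_{\Omega_{1}(t)}u^{2}$, observe that by \eqref{defns} and \eqref{eq5}, $t^{b}=\lambda_{5}(t)\log t$ and $t^{br}=\lambda_{6}(t)\log^{r} t$, so $\Omega_{1}(t)$ can be covered by $O((\log t)^{r+1})$ translates of the box $[-\tfrac{1}{2}\lambda_{5},\tfrac{1}{2}\lambda_{5}]\times[-\tfrac{1}{2}\lambda_{6},\tfrac{1}{2}\lambda_{6}]$ centered at shifted points inside $\Omega_{1}(t)$. On each such subbox the weight $\sech^{2}(\cdot/\lambda_{5})\sech^{2}(\cdot/\lambda_{6})$ is bounded below by an absolute constant. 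Apply the preceding argument with the shift $(\rho_{1},\rho_{2})$ replaced by each subbox center; thanks to \eqref{cond_rho1rho2} the Cauchy-Schwarz bound for $\mathcal{I}$ and the $L^{1}$ norm of the corresponding $\mathcal{I}_{int}$ are uniform in the shift. Summing the resulting uniform estimates over the covering, the \emph{strict} inequality $b<\tfrac{2}{3+r}$ in \eqref{indices} (equivalently, the freedom to take $\varepsilon_{0}>0$ in $q=1+\varepsilon_{0}$) furnishes the logarithmic margin that absorbs the $(\log t)^{r+1}$-loss, yielding the claimed bound.

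\textbf{Main obstacle.} The crux of the argument is quantitatively bridging the logarithmic gap between the virial's natural support scale $\lambda_{5}\times\lambda_{6}$ and the strictly larger scale $t^{b}\times t^{br}$ of $\Omega_{1}(t)$. The auxiliary weight $\phi(\tilde x/\lambda_{5}^{q})$ with $q=1+\varepsilon_{0}>1$ in the definition of $\mathcal{I}(t)$ is engineered precisely to create this margin; it is also the origin of the seemingly mysterious restriction $r>\tfrac{5}{3}$ in \eqref{indices}, since only for such $r$ does the identity $\lambda_{6}=\lambda_{5}^{r}$ respect both the $L^{2}$-control on the weight and the closure of the covering.
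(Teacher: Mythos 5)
The first half of your argument — integrating \eqref{dT_ppal} in time, controlling the boundary term $\sup_t|\mathcal I(t)|$ by Cauchy--Schwarz exactly as in Lemma \ref{bounded2d}, and using $\mathcal I_{int}\in L^1(\{t\gg1\})$ — is correct and is precisely how the paper proceeds (Proposition \ref{le:dT} plus Lemma \ref{bounded2d}). The problem is the bridge from the weighted bound to the bound on $\Omega_1(t)$. Your covering argument does not close. At time $t$ you need $O(\log^{r+1}t)$ translated boxes of size $\lambda_5(t)\times\lambda_6(t)$ to cover $\Omega_1(t)$, so the family of shifts you must use is indexed by $k\in\mathbb Z^2$ with $|k_1|\lesssim\log t$, $|k_2|\lesssim\log^r t$ — an unbounded, $t$-dependent family. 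Each application of the shifted virial identity costs a boundary term $\mathcal I_k(T)-\mathcal I_k(t_0)$, and the Cauchy--Schwarz bound $\sup_t|\mathcal I_k(t)|\leq C$ is an $O(1)$ constant that does \emph{not} decay in $k$; summing these over the (infinite) family of shifts needed to cover $\Omega_1(t)$ for all $t$ therefore diverges. The assertion that the strict inequality $b<\tfrac{2}{3+r}$ "absorbs the $(\log t)^{r+1}$-loss" is exactly the point that needs proof, and the covering mechanism cannot deliver it, because the loss sits in the non-decaying boundary terms, not in the $L^1$-in-time remainders.

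The correct use of the slack $b<\tfrac2{3+r}$ (and this is what the paper does implicitly; compare the remark "by taking $b,r$ slightly smaller but fixed if necessary" in Section \ref{Final}) is to run the \emph{single} virial estimate with scaling parameters built from some $b''\in\left(b,\tfrac{2}{2+q+r}\right]$, $q=1+\varepsilon_0$, $\varepsilon_0$ small. Then $\lambda_5(t)=t^{b''}/\log t\gg t^{b}$ and $\lambda_6(t)=t^{b''r}/\log^{r}t\gg t^{br}$ for $t$ large, so on $\Omega_1(t)$ one has $|\tilde x|/\lambda_5(t)\to0$ and $|\tilde y|/\lambda_6(t)\to0$, hence $\phi(\tilde x/\lambda_5)\phi(\tilde y/\lambda_6)\geq c_0>0$ uniformly on $\Omega_1(t)$, and one application of \eqref{dT_ppal} yields the claim with no covering and no logarithmic loss. (As written, with $b''=b$, the weight degenerates like $t^{-1}$ on the edge of $\Omega_1(t)$, which is why some adjustment is unavoidable.) A minor additional point: the restriction $r>\tfrac53$ does not come from "the closure of the covering" but from the estimate of the nonlocal term $\mathcal I_{1,1,2}$ via Lemma \ref{Localizacion}.
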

This bound, and a very similar argument to the one performed in \cite{MuPo2}, allows to conclude Theorem \ref{KP_decay} \eqref{L2} easily. We skip the details. 

\subsection{Setting} Recall the weighted functions $\psi$ and $\phi$ defined in \eqref{psi_phi}.

\medskip

Recall the functions $\psi$ and $\phi$ introduced in Subsection \ref{Preli1}. For $u$  a solution of  the  Kadomtsev-Petviashvili  equation \eqref{KP:Eq}, we set the functional
\begin{equation}\label{mI}
\mathcal I(t):=\frac{1}{\eta_3(t)}\int_{\mathbb{R}^{2}}u(x,y,t)\psi\left(\frac{\tilde x}{\lambda_5(t)}\right)\phi\left(\frac{\tilde x}{\lambda_5^q(t)}\right)\phi\left(\frac{\tilde y}{\lambda_6(t)}\right)\,\mathrm{d}x\mathrm{d}y,
\end{equation}
where $\tilde x,\tilde y$ were defined in \eqref{tildexy}, and for $t\gg 1$, $\lambda_5(t)$, $\lambda_6(t)$ and $\eta_3(t)$ were introduced in Subsection \ref{Scalings}.

\begin{lem}\label{bounded2d}
For $u\in L^{2}(\mathbb{R}^{2})$, the functional $\mathcal I $ is  well defined and bounded in time.
\end{lem}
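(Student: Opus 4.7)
The plan is to bound $|\mathcal I(t)|$ directly via Cauchy--Schwarz, using the conservation of the $L^2$-mass \eqref{mass} (which holds for both KP-I and KP-II in the class $L^2(\mathbb R^2)$), then exploit the explicit decay/scaling information from Subsection \ref{Scalings}. Since the weight appearing in $\mathcal I(t)$ is a product $\psi(\cdot/\lambda_5)\,\phi(\cdot/\lambda_5^q)\,\phi(\cdot/\lambda_6)$, after Cauchy--Schwarz everything reduces to estimating a Gaussian-like integral that factors into an $x$-integral and a $y$-integral.

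First, I would apply Cauchy--Schwarz to obtain
\begin{equation*}
|\mathcal I(t)|\leq \frac{\|u(t)\|_{L^2_{xy}}}{\eta_3(t)}\left(\int_{\mathbb R^2}\psi^{2}\!\left(\tfrac{\tilde x}{\lambda_5(t)}\right)\phi^{2}\!\left(\tfrac{\tilde x}{\lambda_5^q(t)}\right)\phi^{2}\!\left(\tfrac{\tilde y}{\lambda_6(t)}\right)\,\mathrm{d}x\mathrm{d}y\right)^{1/2}.
\end{equation*}
By the $L^2$-mass conservation \eqref{mass}, $\|u(t)\|_{L^2_{xy}}=\|u_0\|_{L^2_{xy}}$. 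Then, using that $|\psi|\leq 3$ from \eqref{sigmabound}, Tonelli's theorem, and rescaling $s=\tilde x/\lambda_5^q(t)$ resp.\ $s=\tilde y/\lambda_6(t)$, together with $\int_{\mathbb R}\phi^2(s)\,\mathrm{d}s<\infty$ (guaranteed by the exponential decay of $\phi$ from Subsection \ref{Preli1}), the double integral is bounded by $C\,\lambda_5^{q}(t)\lambda_6(t)$ for some universal $C>0$.

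Next I would insert the explicit expressions $\lambda_5(t)=t^{b}/\log t$, $\lambda_6(t)=\lambda_5^{r}(t)$ and $\eta_3(t)=t^{p}\log t$ with $p+b=1$ from \eqref{defns}--\eqref{eq5}, which yields
\begin{equation*}
|\mathcal I(t)|\;\lesssim\;\|u_0\|_{L^2_{xy}}\,\frac{t^{b(q+r)/2}}{t^{1-b}\log^{(q+r)/2+1}t}\;=\;\|u_0\|_{L^2_{xy}}\,\frac{t^{b(q+r+2)/2-1}}{\log^{(q+r)/2+1}t}.
\end{equation*}
The hypothesis \eqref{bqconditions_new}, namely $b\leq 2/(2+q+r)$, gives $b(q+r+2)/2-1\leq 0$, so the power of $t$ is non-positive and the logarithmic factor sits in the denominator. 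Hence $|\mathcal I(t)|\lesssim \|u_0\|_{L^2_{xy}}$ uniformly in $t\gg 1$, which also shows that the integrand is absolutely integrable on $\mathbb R^2$ for every fixed $t$, so $\mathcal I(t)$ is well defined.

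There is no real obstacle here: the statement is essentially a calibration check that the scaling parameters $\lambda_5,\lambda_6,\eta_3$ and the auxiliary exponent $q$ have been chosen compatibly with the constraint \eqref{bqconditions_new}. The only point requiring minor care is to use the boundedness of $\psi$ rather than attempting to integrate $\psi^2$ (which is unbounded at infinity); this is precisely what motivates the introduction of the auxiliary decaying weight $\phi(\tilde x/\lambda_5^q)$ in the definition \eqref{mI}.
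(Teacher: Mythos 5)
Your proposal is correct and follows essentially the same route as the paper: Cauchy--Schwarz together with $L^2$-mass conservation, the bound $|\psi|\leq 3$, the scaling identity $\|\phi(\cdot/\lambda_5^q)\phi(\cdot/\lambda_6)\|_{L^2_{xy}}\sim(\lambda_5^q\lambda_6)^{1/2}$, and finally the constraint $b\leq 2/(2+q+r)$ to make the resulting power of $t$ non-positive. Your exponent bookkeeping agrees with the paper's (note $(q+r)/2+1=(2+q+r)/2$), so there is nothing to add.
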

\begin{proof}
By Cauchy-Schwarz inequality we obtain
\begin{equation}\label{eq1}
\begin{split}
|\mathcal I (t)|&\leq \frac{1}{\eta_3(t)}\|u(t)\|_{L^{2}_{x,y}}\left\|\psi\right\|_{L^{\infty}_{x}}\left\|\phi\left(\frac{\cdot}{\lambda_5^q(t)}\right)\phi\left(\frac{\cdot}{\lambda_6(t)}\right)\right\|_{L^{2}_{x,y}}\\
&=\frac{\left(\lambda_5^q(t)\lambda_6(t)\right)^{1/2}}{\eta_3(t)}\|u_{0}\|_{L^{2}_{xy}}\left\|\psi\right\|_{L^{\infty}_{x}}\left\|\phi\right\|_{L^{2}_{y}}\left\|\phi\right\|_{L^{2}_{x}}\\
&\lesssim \left(\frac{1}{\log ^{(2+q+r)/2}t}\right)\frac{\|u_{0}\|_{L^{2}_{xy}}}{t^{(2-b(2+q+r))/2}}.
\end{split}
\end{equation}
Since \eqref{bqconditions_new} is satisfied we have
\[
\sup_{t\gg 1} |\mathcal I (t)| <\infty,
\]
which finishes the proof.
\end{proof}

\subsection{Dynamics for $\mathcal I (t)$}  In what follows, we compute and estimate the dynamics of $\mathcal I (t)$ in the long time regime.

\begin{prop}\label{le:dT}
There exists $\sigma_2>0$ and $\varepsilon_0>0$ small enough such that, for any $t\geq 2$, one has the bound
\begin{equation}\label{dT_ppal}
\begin{split}
\frac{\sigma_2}{t}\int_{\mathbb{R}^{2}} u^{2} \phi\left(\frac{\tilde x}{\lambda_5(t)}\right)\phi\left(\frac{\tilde y}{\lambda_6(t)}\right)\,\mathrm{d}x\mathrm{d}y &\leq \frac{d\mathcal I }{dt}(t) + \mathcal I _{int}(t),
\end{split}
\end{equation}
provided $q=1+\varepsilon_0$ in $\mathcal I(t)$ \eqref{mI}, and where $\mathcal I _{int}(t)$ are terms that belong to $L^1\left(\{t\gg 1\}\right)$.
\end{prop}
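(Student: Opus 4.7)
The plan is to differentiate $\mathcal I(t)$ in time, substitute $\partial_t u$ from \eqref{KP:Eq}, and isolate the single nonnegative contribution coming from the nonlinearity $u\partial_x u$; this term will, after a small adjustment of the $x$--weight, dominate the left-hand side of \eqref{dT_ppal}, while every other piece will be absorbed into an $L^1(\{t\gg 1\})$ error $\mathcal I_{int}(t)$. Write $\chi(t,x,y):=\psi(\tilde x/\lambda_5)\phi(\tilde x/\lambda_5^q)\phi(\tilde y/\lambda_6)$. Then $\frac{d\mathcal I}{dt}$ decomposes as (a) the contribution of $-\eta_3'/\eta_3^2$, (b) the contribution of $\partial_t u$, and (c) the contribution of $\partial_t\chi$. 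Term (a) is controlled by Cauchy--Schwarz exactly as in \eqref{eq1}, giving a bound of order $(\lambda_5^q\lambda_6)^{1/2}/(t\eta_3)$, which is $L^1$ precisely by \eqref{bqconditions_new}. For (c), the scaling pieces carry factors $\lambda_j'/\lambda_j\sim 1/t$ and are handled analogously; the shift pieces produce, after Cauchy--Schwarz, contributions of the form $|\ell_1|m_1 t^{m_1-1}\lambda_5^{(r+1)/2}/t$ and $|\ell_2|m_2 t^{m_2-1}\lambda_5^{(q+2-r)/2}/t$, which are $L^1$ exactly because of \eqref{cond_rho1rho2}.

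For (b), substituting $\partial_t u=-\partial_x^3 u-\kappa\partial_x^{-1}\partial_y^2 u-u\partial_x u$, the nonlinearity, rewritten as $\tfrac12\partial_x u^2$ and integrated by parts, produces $\tfrac{1}{2\eta_3}\int u^2\partial_x\chi$. Using $\psi'=\phi$, the dominant summand of $\partial_x\chi$ is $\lambda_5^{-1}\phi(\tilde x/\lambda_5)\phi(\tilde x/\lambda_5^q)\phi(\tilde y/\lambda_6)$, and $\eta_3\lambda_5=t$ yields the key positive term $\frac{1}{2t}\int u^2\phi(\tilde x/\lambda_5)\phi(\tilde x/\lambda_5^q)\phi(\tilde y/\lambda_6)$; the other summand carries the smaller factor $\lambda_5^{-q}$ and is bounded by $C\log^{q-1}t/t^{1+b(q-1)}\in L^1$ since $q>1$. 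To pass from this weight to the cleaner $\phi(\tilde x/\lambda_5)\phi(\tilde y/\lambda_6)$ of \eqref{dT_ppal}, I would split the domain at $|\tilde x|=\lambda_5^q$: on $\{|\tilde x|\le\lambda_5^q\}$ one has $\phi(\tilde x/\lambda_5^q)=1$ by property (ii) of $\phi$, so the two weights coincide there; on the complement $\phi(\tilde x/\lambda_5)\le 3e^{-\lambda_5^{q-1}}$ and $3\|u_0\|_{L^2}^2 e^{-\lambda_5^{q-1}}/t$ is $L^1(\{t\gg 1\})$ because $\lambda_5^{q-1}\sim t^{b(q-1)}$ diverges polynomially for $q=1+\varepsilon_0>1$. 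This is precisely the role of the auxiliary weight $\phi(\tilde x/\lambda_5^q)$.

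The dispersive term $-\partial_x^3 u$ becomes $\eta_3^{-1}\int u\,\partial_x^3\chi$ after three integrations by parts; the worst summand has size $\lambda_5^{-3}\phi(\tilde x/\lambda_5)\phi(\tilde x/\lambda_5^q)\phi(\tilde y/\lambda_6)$, and Cauchy--Schwarz together with $\lambda_6=\lambda_5^r$ and $\eta_3\lambda_5=t$ gives $O(\lambda_5^{(r-3)/2}/t)$, which is $L^1$ since $r<3$. The hardest step is the nonlocal term $-\kappa\partial_x^{-1}\partial_y^2 u$. Integrating twice by parts in $y$ and transferring $\partial_x^{-1}$ onto the weight (via $\int(\partial_x^{-1}\partial_y^2 u)\chi=-\int u\,\partial_x^{-1}\partial_y^2\chi$, justified by the Schwartz decay of Lemma \ref{Localizacion}) yields $\frac{\kappa}{\eta_3}\int u\,\partial_x^{-1}\partial_y^2\chi$. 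Here Lemma \ref{Localizacion} is decisive: since $\partial_y^2\chi=\lambda_6^{-2}\psi(\tilde x/\lambda_5)\phi(\tilde x/\lambda_5^q)\phi''(\tilde y/\lambda_6)$, estimate \eqref{bound} gives $|\partial_x^{-1}\partial_y^2\chi|\lesssim\lambda_5^q\lambda_6^{-2}\phi(\tilde x/\lambda_5^q)\phi(\tilde y/\lambda_6)$. Cauchy--Schwarz then produces a bound of order $\lambda_5^{3q/2}/(\eta_3\lambda_6^{3/2})=\lambda_5^{(3q-3r+2)/2}/t$, which is $L^1(\{t\gg 1\})$ if and only if $3q-3r+2<0$, i.e., $r>q+\tfrac23$; with $q=1+\varepsilon_0$ this is exactly $r>\tfrac53+\varepsilon_0$, matching the standing hypothesis $r>\tfrac53$ for $\varepsilon_0$ small enough. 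Without the auxiliary weight $\phi(\tilde x/\lambda_5^q)$, Lemma \ref{Localizacion} would return only the uniformly bounded $\psi$, providing no $x$--decay, and the nonlocal contribution would fail to be integrable: both the technical exponent $q>1$ and the sharp constraint $r>\tfrac53$ from \eqref{bqconditions_new} are forced precisely by this single estimate. Collecting all contributions yields \eqref{dT_ppal} with $\sigma_2=\tfrac12$.
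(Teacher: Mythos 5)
Your proposal is correct and follows essentially the same route as the paper: the same decomposition of $\frac{d}{dt}\mathcal I$, the same isolation of the positive quadratic term $\frac{1}{2\eta_3\lambda_5}\int u^2\psi'\big(\tfrac{\tilde x}{\lambda_5}\big)\phi\big(\tfrac{\tilde x}{\lambda_5^q}\big)\phi\big(\tfrac{\tilde y}{\lambda_6}\big)$ coming from the nonlinearity, and the same use of Lemma \ref{Localizacion} on the term $\partial_x^{-1}\partial_y^2 u$, with identical exponent bookkeeping forcing $q>1$ and $r>\tfrac53$. Your final step---splitting at $|\tilde x|=\lambda_5^q$ to replace the weight $\psi'(\tilde x/\lambda_5)\phi(\tilde x/\lambda_5^q)$ by $\phi(\tilde x/\lambda_5)$ up to an $L^1(\{t\gg1\})$ error---is in fact a cleaner justification of the concluding comparison than the pointwise inequality asserted at the end of the paper's proof.
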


Assuming this estimate and Lemma \ref{bounded2d}, one concludes Theorem \ref{lem:d2L2} as in \cite{MuPo2}. The rest of the Section will be devoted to the proof of Proposition \ref{le:dT}. We have
\begin{equation}\label{separation}
\begin{split}
\frac{\mathrm{d}}{\mathrm{d}t}\mathcal I (t)&=\frac{1}{\eta_3(t)}\int_{\mathbb{R}^{2}}\partial_{t}\left(u\psi\left(\frac{\tilde x}{\lambda_5(t)}\right)\phi\left(\frac{\tilde x}{\lambda_5^q(t)}\right)\phi\left(\frac{\tilde y}{\lambda_6(t)}\right)\right)\,\mathrm{d}x\mathrm{d}y\\
&\quad -\frac{\eta_3'(t)}{\eta_3^{2}(t)}\int_{\mathbb{R}^{2}}u\psi\left(\frac{\tilde x}{\lambda_5(t)}\right)\phi\left(\frac{\tilde x}{\lambda_5^q(t)}\right)\phi\left(\frac{\tilde y}{\lambda_6(t)}\right)\,\mathrm{d}x\mathrm{d}y\\
&=: \mathcal I _{1}(t)+ \mathcal I _{2}(t).
\end{split}
\end{equation}
First, we bound  $\mathcal I _{2},$ that in virtue of \eqref{eq1} the same analysis applied there  yield
\begin{equation}\label{2}
\begin{split}
|\mathcal I _{2}(t)|&\leq\left|\frac{\eta_3'(t)}{\eta_3^{2}(t)}\int_{\mathbb{R}^{2}}u\psi\left(\frac{\tilde x}{\lambda_5(t)}\right)\phi\left(\frac{\tilde x}{\lambda^q_{5}(t)}\right)\phi\left(\frac{\tilde y}{\lambda_6(t)}\right)\,\mathrm{d}x\mathrm{d}y\right|\\
&\lesssim\|u_{0}\|_{L^{2}_{xy}}\frac{\left(\lambda^q_{5}(t)\lambda_6(t)\right)^{1/2}|\eta_3'(t)|}{\eta_3^{2}(t)}\\
&\lesssim \frac{1}{t}\frac{1}{\eta_3(t)}\frac{1}{(\lambda_5(t))^{-(q+r)/2}} =\frac{1}{t^{2-\frac b2(2+q+r)}\log^{1+\frac12(q+r)} t}.
\end{split}
\end{equation}
From \eqref{defns} and \eqref{bqconditions_new} we have $b\le\frac{2}{q+r+2}$, then  $\mathcal I _{2}\in L^1(t\gg1)$.

\medskip

Unlike $\mathcal I _{2}$, to bound  $\mathcal I _{1}$ it is required   to take into consideration the dispersive part associated to the KP equation.   More precisely,
\begin{equation}\label{separation_Xi}
\begin{split}
\mathcal I _{1}(t)&=\frac{1}{\eta_3(t)}\int_{\mathbb{R}^{2}}\partial_{t}u\psi\left(\frac{\tilde x}{\lambda_5(t)}\right)\phi\left(\frac{\tilde x}{\lambda_5^q(t)}\right)\phi\left(\frac{\tilde y}{\lambda_6(t)}\right)\,\mathrm{d}x\mathrm{d}y\\
&\quad -\frac{\lambda_{5}'(t)}{\lambda_5(t)\eta_3(t)}\int_{\mathbb{R}^{2}}u \psi'\left(\frac{\tilde x}{\lambda_5(t)}\right)\left(\frac{\tilde x}{\lambda_5(t)}\right)\phi\left(\frac{\tilde x}{\lambda_5^q(t)}\right)\phi\left(\frac{\tilde y}{\lambda_6(t)}\right)\,\mathrm{d}x\mathrm{d}y\\
&\quad -q\frac{\lambda_{5}'(t)}{\lambda_5(t)\eta_3(t)}\int_{\mathbb{R}^{2}}u \psi\left(\frac{\tilde x}{\lambda_5(t)}\right)\left(\frac{\tilde x}{\lambda_5^q(t)}\right)\phi'\left(\frac{\tilde x}{\lambda_5^q(t)}\right)\phi\left(\frac{\tilde y}{\lambda_6(t)}\right)\,\mathrm{d}x\mathrm{d}y\\
&\quad -\frac{\lambda_{6}'(t)}{\lambda_6(t)\eta_3(t)}\int_{\mathbb{R}^{2}}u \psi\left(\frac{\tilde x}{\lambda_5(t)}\right)\phi\left(\frac{\tilde x}{\lambda_5^q(t)}\right)\left(\frac{\tilde y}{\lambda_6(t)}\right)\phi'\left(\frac{\tilde y}{\lambda_6(t)}\right)\,\mathrm{d}x\mathrm{d}y\\
&\quad - \frac{\rho_1'(t)}{\lambda_5(t)\eta_3(t)}\int_{\mathbb{R}^{2}}u\psi'\left(\frac{\tilde x}{\lambda_5(t)}\right)\phi\left(\frac{\tilde x}{\lambda_5^q(t)}\right)\phi\left(\frac{\tilde y}{\lambda_6(t)}\right)\,\mathrm{d}x\mathrm{d}y\\
&\quad - \frac{\rho_1'(t)}{\lambda_1^q(t)\eta_3(t)}\int_{\mathbb{R}^{2}}u\psi\left(\frac{\tilde x}{\lambda_5(t)}\right)\phi'\left(\frac{\tilde x}{\lambda_5^q(t)}\right)\phi\left(\frac{\tilde y}{\lambda_6(t)}\right)\,\mathrm{d}x\mathrm{d}y\\
&\quad - \frac{\rho_2'(t)}{\lambda_6(t)\eta_3(t)}\int_{\mathbb{R}^{2}}u\psi\left(\frac{\tilde x}{\lambda_5(t)}\right)\phi\left(\frac{\tilde x}{\lambda_5^q(t)}\right)\phi'\left(\frac{\tilde y}{\lambda_6(t)}\right)\,\mathrm{d}x\mathrm{d}y\\
&=: \mathcal I _{1,1}(t)+\mathcal I _{1,2}(t)+\mathcal I _{1,3}(t)+\mathcal I _{1,4}(t) +\mathcal I _{1,5}(t) +\mathcal I _{1,6}(t) +\mathcal I _{1,7}(t).
\end{split}
\end{equation}
Concerning to $\mathcal I _{1,1}$ we have by \eqref{KP:Eq}  and integration by  parts
\begin{equation}\label{separation_Xi1}
\begin{split}
\mathcal I _{1,1}(t)&=-\frac{1}{\eta_3(t)}\int_{\mathbb{R}^{2}}\left(\partial_{x}^{3} u+\kappa\partial_{x}^{-1}\partial_{y}^{2}u+u\partial_{x}u\right)\psi\left(\frac{\tilde x}{\lambda_5(t)}\right)\phi\left(\frac{\tilde x}{\lambda^q_{5}(t)}\right)\phi\left(\frac{\tilde y}{\lambda_6(t)}\right)\,\mathrm{d}x\mathrm{d}y\\
&=\frac{1}{\eta_3(t)}\int_{\mathbb{R}^{2}} \partial_{x}^{3} u\psi\left(\frac{\tilde x}{\lambda_5(t)}\right)\phi\left(\frac{\tilde x}{\lambda^q_{5}(t)}\right)\phi\left(\frac{\tilde y}{\lambda_6(t)}\right)\,\mathrm{d}x\mathrm{d}y\\
&\quad +\frac{\kappa}{\eta_3(t)}\int_{\mathbb{R}^{2}} \partial_{x}^{-1}\partial_{y}^{2} u\psi\left(\frac{\tilde x}{\lambda_5(t)}\right)\phi\left(\frac{\tilde x}{\lambda^q_{5}(t)}\right)\phi\left(\frac{\tilde y}{\lambda_6(t)}\right)\,\mathrm{d}x\mathrm{d}y\\
&\quad +\frac{1}{2\eta_3(t)\lambda_5(t)}\int_{\mathbb{R}^{2}}  u^{2}\psi'\left(\frac{\tilde x}{\lambda_5(t)}\right)\phi\left(\frac{\tilde x}{\lambda_5^q(t)}\right)\phi\left(\frac{\tilde y}{\lambda_6(t)}\right)\,\mathrm{d}x\mathrm{d}y\\
&\quad +\frac{1}{2\eta_3(t)\lambda^q_{5}(t)}\int_{\mathbb{R}^{2}}  u^{2}\psi\left(\frac{\tilde x}{\lambda_5(t)}\right)\phi'\left(\frac{\tilde x}{\lambda^q_{5}(t)}\right)\phi\left(\frac{\tilde y}{\lambda_6(t)}\right)\,\mathrm{d}x\mathrm{d}y\\
&=: \mathcal I _{1,1,1}(t)+\mathcal I _{1,1,2}(t)+\mathcal I _{1,1,3}(t)+ \mathcal I _{1,1,4}(t).
\end{split}
\end{equation}
For  $\mathcal I _{1,1,1}$ we have  after combining  integration by parts
\begin{equation*}
\begin{split}
\mathcal I _{1,1,1}(t)&=\frac{1}{\eta_3(t)\lambda_{5}^{3}(t)}\int_{\mathbb{R}^{2}}u\psi'''
\left(\frac{\tilde x}{\lambda_5(t)}\right)\phi\left(\frac{\tilde x}{\lambda^q_{5}(t)}\right)\phi\left(\frac{\tilde y}{\lambda_6(t)}\right)\,\mathrm{d}x\mathrm{d}y\\
&\quad +\frac{3}{\eta_3(t)\lambda_{5}^{2+q}(t)}\int_{\mathbb{R}^{2}}u\psi''\left(\frac{\tilde x}{\lambda_5(t)}\right)\phi'\left(\frac{\tilde x}{\lambda^q_{5}(t)}\right)\phi\left(\frac{\tilde y}{\lambda_6(t)}\right)\mathrm{d}x\mathrm{d}y\\
&\quad +\frac{3}{\eta_3(t)\lambda_{5}^{1+2q}(t)}\int_{\mathbb{R}^{2}}u\psi'\left(\frac{\tilde x}{\lambda_5(t)}\right)\phi''\left(\frac{\tilde x}{\lambda^q_{5}(t)}\right)\phi\left(\frac{\tilde y}{\lambda_6(t)}\right)\mathrm{d}x\mathrm{d}y\\
&\quad +\frac{1}{\eta_3(t)\lambda_{5}^{3q}(t)}\int_{\mathbb{R}^{2}}u\psi\left(\frac{\tilde x}{\lambda_5(t)}\right)\phi'''\left(\frac{\tilde x}{\lambda^q_{5}(t)}\right)\phi\left(\frac{\tilde y}{\lambda_6(t)}\right)\mathrm{d}x\mathrm{d}y.
\end{split}
\end{equation*}
First, we bound each term using Cauchy-Schwarz inequality, as follows:
\[
  \begin{aligned}
    & |\mathcal I _{1,1,1}(t)| \\
    & \lesssim \frac{(\lambda_5(t)\lambda_6(t))^{1/2}}{\eta_3(t)\lambda_5^3(t)}\|u_0\|_{L^2_{xy}}\|\psi^{\prime\prime\prime}\|_{L^2_x}\|\phi\|_{L^\infty_{x}}\|\phi\|_{L^2_{y}}+\frac{(\lambda_5(t)\lambda_6(t))^{1/2}}{\eta_3(t)\lambda_5^{2+q}(t)}\|u_0\|_{L^2_{xy}}\|\psi^{\prime\prime}\|_{L^2_x}\|\phi^\prime\|_{L^\infty_{x}}\|\phi\|_{L^2_{y}}\\
    &+\frac{(\lambda_5(t)\lambda_6(t))^{1/2}}{\eta_3(t)\lambda_5^{1+2q}(t)}\|u_0\|_{L^2_{xy}}\|\psi^{\prime}\|_{L^2_x}\|\phi^{\prime\prime}\|_{L^\infty_{x}}\|\phi\|_{L^2_{y}}+\frac{(\lambda_5^{q}(t)\lambda_6(t))^{1/2}}{\eta_3(t)\lambda_5^{3q}(t)}\|u_0\|_{L^2_{xy}}\|\psi\|_{L^{\infty}_x}\|\phi\|_{L^2_{x}}\|\phi^{\prime\prime}\|_{L^2_{y}}.
  \end{aligned}
\]
Consequently, replacing \eqref{eq5}, \eqref{eq4} and \eqref{defns}, and since $q>1$,
\begin{equation*}\label{111}
  \begin{aligned}
    |\mathcal I _{1,1,1}(t)|    \lesssim&~ {} \frac{1}{\eta_3(t)(\lambda_5(t))^{5/2-r/2}}+\frac{1}{\eta_3(t)(\lambda_5(t))^{3/2+q-r/2}}\\
    &+\frac{1}{\eta_3(t)(\lambda_5(t))^{1/2+2q-r/2}}+\frac{1}{\eta_3(t)(\lambda_5(t))^{5q/2-r/2}}\\
    \lesssim &~{}  \frac{1}{\eta_3(t)(\lambda_5(t))^{5/2-r/2}} = \frac{1}{t^{1 +\frac b2(3-r) }\log^{1-\frac b2(5-r)} t}.
  \end{aligned}
\end{equation*}
From \eqref{bqconditions_new} we easily see that $\mathcal I _{1,1,1}\in L^1(\{t\gg1\})$. 

\medskip

To bound the term $\mathcal I _{1,1,2}(t)$ we will require Lemma \ref{Localizacion} as key step. Combining integration by parts, \eqref{bound} and Cauchy-Schwarz inequality, 
\[
\begin{split}
\mathcal I _{1,1,2}(t) &=\frac{\kappa}{\eta_3(t)}\int_{\mathbb{R}^{2}} \partial_{x}^{-1}\partial_{y}^{2} u\psi\left(\frac{\tilde x}{\lambda_5(t)}\right)\phi\left(\frac{\tilde x}{\lambda_5^q(t)}\right)\phi\left(\frac{\tilde y}{\lambda_6(t)}\right)\,\mathrm{d}x\mathrm{d}y\\
&=-\frac{\kappa}{\eta_3(t)\lambda_{6}^{2}(t)}\int_{\mathbb{R}^{2}}u\partial_{x}^{-1}\left(\psi\left(\frac{\tilde x}{\lambda_5(t)}\right)\phi\left(\frac{\tilde x}{\lambda_5^q(t)}\right)\right)\phi''\left(\frac{\tilde y}{\lambda_6(t)}\right)\,\mathrm{d}x\mathrm{d}y,
\end{split}
\]
so that,
\begin{equation*}\label{X112}
\begin{split}
|\mathcal I _{1,1,2}(t)| &\leq \frac{|\kappa|}{\eta_3(t)\lambda_{6}^{2}(t)}\int_{\mathbb{R}^{2}}|u|\left|\partial_{x}^{-1}\left(\psi\left(\frac{\tilde x}{\lambda_5(t)}\right)\phi\left(\frac{\tilde x}{\lambda_5^q(t)}\right)\right)\phi''\left(\frac{\tilde y}{\lambda_6(t)}\right)\right|\,\mathrm{d}x\mathrm{d}y\\
&\lesssim \frac{\lambda_{5}^{q}(t)}{\eta_3(t)\lambda_{6}^{2}(t)}\int_{\mathbb{R}^{2}}|u|\phi\left(\frac{\tilde x}{\lambda_5^q(t)}\right)\left|\phi''\left(\frac{\tilde y}{\lambda_6(t)}\right)\right|\,\mathrm{d}x\mathrm{d}y\\
&\lesssim \frac{\lambda_{5}^{3q/2}(t)}{\eta_3(t)\lambda_{6}^{3/2}(t)}\|u_{0}\|_{L^{2}_{xy}}\|\phi\|_{L^{2}_{x}}\|\phi\|_{L^{2}_{y}}\\
& \sim \frac{1}{\eta_3(t)\lambda_{5}^{\frac32(r-q)}(t)} \\
& = \frac{1}{t^{1+\frac32b(r-q-\frac23)} \log^{1-\frac32(r-q)} t}.
		\end{split}
\end{equation*}
Since $r>5/3,$ taking $q=1+\varepsilon_0$, with $\varepsilon_0>0$ sufficiently small, we obtain that  $\mathcal I _{1,1,2}\in L^1(\{t\gg1\}).$


\medskip

We emphasize that the term $\mathcal I _{1,1,3}$ in \eqref{separation_Xi1}
\begin{equation}\label{Xi113}
\mathcal I _{1,1,3}(t)= \frac{1}{2\eta_3(t)\lambda_5(t)}\int_{\mathbb{R}^{2}}  u^{2}\psi'\left(\frac{\tilde x}{\lambda_5(t)}\right)\phi\left(\frac{\tilde x}{\lambda_5^q(t)}\right)\phi\left(\frac{\tilde y}{\lambda_6(t)}\right)\,\mathrm{d}x\mathrm{d}y,
\end{equation}
is the term to be estimated after integrating in time, leading to the left hand side in \eqref{dT_ppal}. Therefore, it will be taken until the end of the proof.

\medskip

The therm $\mathcal I _{1,1,4}$ in \eqref{separation_Xi1} satisfies de following estimate
\begin{equation*}\label{11}
\begin{split}
|\mathcal I _{1,1,4}(t)|&\leq\left|\frac{1}{2\eta_3(t)\lambda_5^q(t)}\int_{\mathbb{R}^{2}}  u^{2}\psi\left(\frac{\tilde x}{\lambda_5(t)}\right)\phi'\left(\frac{\tilde x}{\lambda_5^q(t)}\right)\phi\left(\frac{\tilde y}{\lambda_6(t)}\right)\,\mathrm{d}x\mathrm{d}y\right|\\
&\le\frac{1}{2\eta_3(t)\lambda_5^q(t)}\|u_0\|^2_{L^2_{xy}}\|\psi\|_{L_x^\infty}\|\phi\|_{L_x^\infty}\|\phi\|_{L_y^\infty}\\
& \sim \frac{1}{t^{1 +(q-1)b} \log^{1-q} t}.
\end{split}
\end{equation*}
Since \eqref{bqconditions_new} are satisfied, and $q=1+\varepsilon_0>1$, $\mathcal I _{1,1,4}\in L^1(t\gg1)$. 

\medskip

Now we deal with the term $\mathcal I _{1,5}$ in \eqref{separation_Xi}. We have
\[
\begin{aligned} 
& \left| \frac{\rho_1'(t)}{\lambda_5(t)\eta_3(t)}\int_{\mathbb{R}^{2}}u\psi'\left(\frac{\tilde x}{\lambda_5(t)}\right)\phi\left(\frac{\tilde x}{\lambda_5^q(t)}\right)\phi\left(\frac{\tilde y}{\lambda_6(t)}\right)\,\mathrm{d}x\mathrm{d}y \right| \\
&~{} \lesssim \frac{|\rho_1'(t)| \lambda_6^{1/2}(t) \|u_0\|_{L^2_{xy}}}{\lambda_5^{1/2}(t)\eta_3(t)} \lesssim \frac{|\ell_1m_1|}{t^{2 -\frac12b(r+1)-m_1} \log^{1+\frac12(r-1)} t},
\end{aligned}
\]
which integrates in time because of \eqref{cond_rho1rho2}. The term $\mathcal I _{1,6}$ is treated very similarly: one has for $t\gg 1$,
 \[
 \begin{aligned} 
& \left| \frac{\rho_1'(t)}{\lambda_5^q(t)\eta_3(t)}\int_{\mathbb{R}^{2}}u\psi\left(\frac{\tilde x}{\lambda_5(t)}\right)\phi'\left(\frac{\tilde x}{\lambda_5^q(t)}\right)\phi\left(\frac{\tilde y}{\lambda_6(t)}\right)\,\mathrm{d}x\mathrm{d}y\right|\\
&~{} \lesssim \frac{|\rho_1'(t)| \lambda_6^{1/2}(t) \|u_0\|_{L^2_{xy}}}{\lambda_5^{q/2}(t)\eta_3(t)}\\
&\lesssim  \frac{|\rho_1'(t)| \lambda_6^{1/2}(t) \|u_0\|_{L^2_{xy}}}{\lambda_5^{1/2}(t)\eta_3(t)},
\end{aligned} 
\] 
since $q>1$. Finally, the term $\mathcal I _{1,7}$ can be treated as follows:
\[
\begin{aligned}
& \left|  \frac{\rho_2'(t)}{\lambda_6(t)\eta_3(t)}\int_{\mathbb{R}^{2}}u\psi\left(\frac{\tilde x}{\lambda_5(t)}\right)\phi\left(\frac{\tilde x}{\lambda_5^q(t)}\right)\phi'\left(\frac{\tilde y}{\lambda_6(t)}\right)\,\mathrm{d}x\mathrm{d}y \right|\\
&~{} \lesssim \frac{|\rho_2'(t)| \lambda_5^{q/2}(t) \|u_0\|_{L^2_{x,y}}}{\lambda_6^{1/2}(t)\eta_3(t)} \lesssim \frac{|\ell_2m_2| \|u_0\|_{L^2_{xy}}}{ t^{2-\frac12b(q+2-r)-m_2}\log^{1- \frac12(r-q)} t},
\end{aligned}
\]
which integrates in time because of \eqref{cond_rho1rho2}.

\medskip

This last estimate ends the study of the term $\mathcal I _{1,1}$ in \eqref{separation_Xi}, concluding that 
\begin{equation}\label{I11_final}
\mathcal I _{1,1} (t)=  \frac{1}{2\eta_3(t)\lambda_5(t)}\int_{\mathbb{R}^{2}}  u^{2}\psi'\left(\frac{\tilde x}{\lambda_5(t)}\right)\phi\left(\frac{\tilde x}{\lambda_5^q(t)}\right)\phi\left(\frac{\tilde y}{\lambda_6(t)}\right)\,\mathrm{d}x\mathrm{d}y +\mathcal I_{1,1,int}(t),
\end{equation}
with $\mathcal I_{1,1,int}(t)\in L^1(\{t\gg1\})$.

\medskip

Now, we focus our attention in the remaining terms in \eqref{separation_Xi}. First, by means of Young's inequality,  we have for   $\epsilon>0,$
\begin{equation*}
\begin{split}
\left| \mathcal I _{1,2}(t)  \right| &= \left| \frac{\lambda_{5}'(t)}{\lambda_5(t)\eta_3(t)}\int_{\mathbb{R}^{2}}u \psi'\left(\frac{\tilde x}{\lambda_5(t)}\right)\left(\frac{\tilde x}{\lambda_5(t)}\right)\phi\left(\frac{\tilde x}{\lambda_5^q(t)}\right)\phi\left(\frac{\tilde y}{\lambda_6(t)}\right)\,\mathrm{d}x\mathrm{d}y \right| \\
&\leq \frac{1}{4\epsilon}\left|\frac{\lambda_{5}'(t)}{\lambda_5(t)\eta_3(t)}\right|\int_{\mathbb{R}^{2}}u^{2}\psi'\left(\frac{\tilde x}{\lambda_5(t)}\right)
\phi\left(\frac{\tilde x}{\lambda_5^q(t)}\right)\phi\left(\frac{\tilde y}{\lambda_6(t)}\right)\,\mathrm{d}x\mathrm{d}y\\
&\quad +\epsilon\left|\frac{\lambda_{5}'(t)}{\lambda_5(t)\eta_3(t)}\right|\int_{\mathbb{R}^{2}}
\psi'\left(\frac{\tilde x}{\lambda_5(t)}\right)\left(\frac{\tilde x}{\lambda_5(t)}\right)^{2}\phi\left(\frac{\tilde x}{\lambda_5^q(t)}\right)\phi\left(\frac{\tilde y}{\lambda_6(t)}\right)\,\mathrm{d}x\mathrm{d}y\\
&=\frac{1}{4\epsilon}\left|\frac{\lambda_{5}'(t)}{\lambda_5(t)\eta_3(t)}\right|\int_{\mathbb{R}^{2}}u^{2}\psi'\left(\frac{\tilde x}{\lambda_5(t)}\right)
\phi\left(\frac{\tilde x}{\lambda_5^q(t)}\right)\phi\left(\frac{\tilde y}{\lambda_6(t)}\right)\,\mathrm{d}x\mathrm{d}y\\
&\quad+\epsilon\left|\frac{\lambda_{5}'(t)\lambda_6(t)}{\eta_3(t)}\right|\|(\cdot)^2\psi^\prime_{\sigma}\|_{L^1_x}\left\|\phi(\cdot)\right\|_{L_x^\infty}\|\phi\|_{L^{1}_{y}},\\
\end{split}
\end{equation*}
so that, taking $\epsilon=\lambda'_{5}(t)>0$ for $t\gg1;$ it is clear that
\begin{equation*}
\begin{split}
\left|\mathcal I _{1,2}(t)\right|&\le \frac{1}{4\lambda_5(t)\eta_3(t)}\int_{\mathbb{R}^{2}}u^{2}\psi'\left(\frac{\tilde x}{\lambda_5(t)}\right)
\phi\left(\frac{\tilde x}{\lambda_5^q(t)}\right)\phi\left(\frac{\tilde y}{\lambda_6(t)}\right)\,\mathrm{d}x\mathrm{d}y\\
&\quad +c \frac{\left(\lambda_{5}'(t)\right)^{2}}{\lambda_5^2(t)}\frac{\lambda_6(t)}{\eta_3(t)(\lambda_5(t))^{-2}}\\
&=:\frac{1}{2}\mathcal I _{1,1,3}(t)+\mathcal I ^*_{1,2}(t).
\end{split}
\end{equation*}
Note that the first term in the r.h.s. is the quantity to be estimated (see \eqref{Xi113}), unlike the remaining term $\mathcal I ^*_{1,2}$ which
 satisfies
\begin{equation*}\label{12}
\begin{aligned}
0\leq &~{} \mathcal I ^*_{1,2}(t)\le\frac{\left(\lambda_{5}'(t)\right)^{2}}{\lambda_5^2(t)}\frac{\lambda_6(t)}{\eta_3(t)(\lambda_5(t))^{-2}}\lesssim \frac{1}{t^2}\frac{1}{\eta_3(t)(\lambda_5(t))^{-2-r}}\\
= &~{} \frac{1}{t^{3 -(3+r)b}\log^{3+r}t}.
\end{aligned}
\end{equation*}
The  term $\mathcal I ^*_{1,2}$ belongs in $L^1(t \gg 1)$, since \eqref{bqconditions_new} implies that $b\le\frac{2}{2+q+r}<\frac{2}{r+3}$. This ends the estimate of $\mathcal I _{1,2}(t)$, concluding that 
\begin{equation}\label{I12_final}
\left|\mathcal I _{1,2}(t)\right| \leq \frac{1}{2}\mathcal I _{1,1,3}(t) +\mathcal I_{1,2,int}(t),
\end{equation}
with $\mathcal I_{1,2,int}(t) \in L^1(\{t\gg1\})$.

\medskip

Now, we consider the term $\mathcal I _{1,3}(t)$. Combining the properties attribute to $\phi$ and Young's inequality we get for $\theta(t)=t^p/\log t$,
\[
\begin{split}
\left| \mathcal I _{1,3}(t) \right|   &= q\left|\frac{\lambda_{5}'(t)}{\lambda_5(t)\eta_3(t)}\int_{\mathbb{R}^{2}}u \psi\left(\frac{\tilde x}{\lambda_5(t)}\right)\left(\frac{\tilde x}{\lambda_5^q(t)}\right)\phi'\left(\frac{\tilde x}{\lambda^q_{5}(t)}\right)\phi\left(\frac{\tilde y}{\lambda_6(t)}\right)\,\mathrm{d}x\mathrm{d}y \right| \\
&\leq q\left|\frac{\lambda_{5}'(t)}{\lambda_5(t)\eta_3(t)}\right|\int_{\mathbb{R}^{2}}|u| \left| \psi\left(\frac{\tilde x}{\lambda_5(t)}\right)\right|\left|\frac{\tilde x}{\lambda_5^q(t)}\right|\left|\phi^\prime\left(\frac{\tilde x}{\lambda_5^q(t)}\right)\right|\phi\left(\frac{\tilde y}{\lambda_6(t)}\right)\,\mathrm{d}x\mathrm{d}y\\
&\leq \frac{q}{2} \left|\frac{\lambda_{5}'(t) \theta(t)}{\lambda_5(t)\eta_3(t)}\right|\int_{\mathbb{R}^{2}}u^{2} \psi^{2}\left(\frac{\tilde x}{\lambda_5(t)}\right) \,\mathrm{d}x\mathrm{d}y\\
&\quad +\frac{q}{2}\left|\frac{\lambda_{5}'(t)}{\lambda_5(t)\eta_3(t)\theta(t)}\right|\int_{\mathbb{R}^{2}}\left(\frac{\tilde x}{\lambda_5^q(t)}\right)^{2}\left|\phi^\prime\left(\frac{\tilde x}{\lambda_5^q(t)}\right)\right|^2 \phi^2\left(\frac{\tilde y}{\lambda_6(t)}\right)\mathrm{d}x\mathrm{d}y\\
&\leq \frac{q}{2}
\left|\frac{\lambda_{5}'(t)\theta(t)}{\lambda_5(t)\eta_3(t)}\right|\|u_{0}\|_{L^{2}_{xy}}^{2} \left\|\psi\right\|_{L_x^\infty}^2 +\frac{q}{2}\left|\frac{\lambda_{5}'(t)\lambda_5^{q}\lambda_6(t)}{\lambda_5(t)\eta_3(t)\theta(t)}\right|\left\|(\cdot) \phi^{\prime}(\cdot)\right\|_{L^{2}_{x}}^2\|\phi\|_{L^{2}_{y}}^2.
\end{split}
\]
Hence,
\begin{equation}\label{I13_final}
\begin{split}
\left| \mathcal I _{1,3}(t) \right|   &\lesssim \frac{1}{t\log^2(t)}+\frac{\log t}{t^{1+p}}\frac{1}{\eta_3(t)\lambda_5^{-q-r}(t)}\\
&=\frac{1}{t\log^2(t)}+\frac{\log t}{t^{2-b}}\frac{1}{\eta_3(t)\lambda_5^{-q-r}(t)}=\frac{1}{t\log^2 t}+\frac{1}{t^{3-b(2+q+r)}\log^{q+r} t}.
\end{split}
\end{equation}
 By \eqref{bqconditions_new} we have $b\leq\frac{2}{2+q+r}$, consequently  $3-b(2+q+r)\ge 1$. Thus, $\mathcal I _{1,3}\in L^1(t\gg1)$. 
 
 \medskip
 
 As before, we get for $\theta(t)=t^p/\log t$,
\begin{equation}\label{I14_final}
\begin{aligned}
  \left|\mathcal I _{1,4}(t)\right|&\le\left|\frac{\lambda_{6}^{\prime}(t)}{\lambda_6(t) \eta_3(t)}\right| \int_{\mathbb{R}^{2}} |u| \left| \psi\left(\frac{\tilde x}{\lambda_5(t)}\right) \right| \phi\left(\frac{\tilde x}{\lambda_5^q(t)}\right)\left|\frac{\tilde y}{\lambda_6(t)}\right| \left|\phi^{\prime}\left(\frac{\tilde y}{\lambda_6(t)}\right)\right| \mathrm{d} x \mathrm{d} y\\
  &\le\left|\frac{\lambda_{6}^{\prime}(t)\theta(t)}{2\lambda_6(t) \eta_3(t)}\right| \int_{\mathbb{R}^{2}} u^2 \psi^2\left(\frac{\tilde x}{\lambda_5(t)}\right)\mathrm{d} x \mathrm{d} y\\
  &\quad+\left|\frac{\lambda_{6}^{\prime}(t)}{2\lambda_6(t) \eta_3(t)\theta(t)}\right| \int_{\mathbb{R}^{2}}  \phi^2\left(\frac{\tilde x}{\lambda_5^q(t)}\right) \left|\frac{\tilde y}{\lambda_6(t)}\right|^2\left|\phi^{\prime}\left(\frac{\tilde y}{\lambda_6(t)}\right)\right|^2 \mathrm{d} x \mathrm{d} y\\
  &\le \left|\frac{\lambda_{6}^{\prime}(t)\theta(t)}{2\lambda_6(t) \eta_3(t)}\right|\|u_0\|^2_{L^2_{xy}} \|\psi\|_\infty^2 +\left|\frac{\lambda_{6}^{\prime}(t)\lambda_5^q(t) }{2 \eta_3(t)\theta(t)}\right| \|\phi\|_{L^2_x}^2 \|(\cdot)\phi^\prime(\cdot)\|_{L^2_y}^2\\
&\lesssim\frac{1}{t\log^{2} t}+
\frac{1}{t^{3-b(2+q+r)}\log^{q+r} t}.
\end{aligned}
\end{equation}
Just like in \eqref{I13_final} we obtain that $\mathcal I _{1,4}\in L^1(t\gg 1)$. We conclude from \eqref{separation_Xi}, \eqref{I11_final}, \eqref{I12_final}, \eqref{I13_final} and \eqref{I14_final},
\begin{equation}\label{I1_final}
\mathcal I_1(t) \geq \frac{1}{4\eta_3(t)\lambda_5(t)}\int_{\mathbb{R}^{2}}  u^{2}\psi'\left(\frac{\tilde x}{\lambda_5(t)}\right)\phi\left(\frac{\tilde x}{\lambda_5^q(t)}\right)\phi\left(\frac{\tilde y}{\lambda_6(t)}\right)\,\mathrm{d}x\mathrm{d}y -\mathcal I_{1,int}(t),
\end{equation}
with $\mathcal I_{1,int}(t)\in L^1(\{t\gg1\})$. Finally, combining \eqref{separation}, \eqref{2} and \eqref{I1_final}, we have
\begin{equation*}
\begin{split}
 \frac{1}{4\eta_3(t)\lambda_5(t)}\int_{\mathbb{R}^{2}}  u^{2}\psi'\left(\frac{\tilde x}{\lambda_5(t)}\right)\phi\left(\frac{\tilde x}{\lambda_5^q(t)}\right)\phi\left(\frac{\tilde y}{\lambda_6(t)}\right)\,\mathrm{d}x\mathrm{d}y \leq \frac{d\mathcal I }{dt}(t) + \mathcal I _{int}(t),
\end{split}
\end{equation*}
with $I_{int}(t)\in L^1(\{t\gg1\})$.
In consequence, we conclude \eqref{dT_ppal} by noticing that for some $\sigma_2>0$ fixed, independent of time, we have $\eta_3(t)\lambda_5(t)=t$, $q>1$ and
\[
\begin{aligned}
&~{} \frac{\sigma_2}{t}\int_{\mathbb{R}^{2}} u^{2} \phi\left(\frac{\tilde x}{\lambda_5(t)}\right)\phi\left(\frac{\tilde y}{\lambda_6(t)}\right)\,\mathrm{d}x\mathrm{d}y \\
&~{} \leq \frac{1}{4\eta_3(t)\lambda_5(t)}\int_{\mathbb{R}^{2}}  u^{2}\psi'\left(\frac{\tilde x}{\lambda_5(t)}\right)\phi\left(\frac{\tilde x}{\lambda_5^q(t)}\right)\phi\left(\frac{\tilde y}{\lambda_6(t)}\right)\,\mathrm{d}x\mathrm{d}y.
\end{aligned}
\]
This ends the proof of Lemma \ref{le:dT}.

\section{Proof of Theorem \ref{thm_KPI}}\label{Final}

This last section is devoted to connect Propositions \ref{paso1} and \ref{paso2} and Theorem \ref{KP_decay} to obtain the proof of Theorem \ref{thm_KPI}.

\subsection{End of proof of Theorem \ref{thm_KPI}, part \eqref{decay_v}} Recall Proposition \ref{paso2} and Proposition \ref{le:dT}. We have
\[
\begin{aligned}
&~{} \frac{\sigma_1}{t\log t}\int_{\mathbb{R}^{2}} v^{2}\left(\partial_{y}\Psi\right)\left(\frac{\tilde x}{\lambda_{3}(t)},\frac{\tilde y}{\lambda_{4}(t)}
\right)\mathrm{d}x\mathrm{d}y \\
&~{} \qquad \leq   -\frac{d}{dt}\mathcal J(t)  +  \frac{C_1}{t\log t}\int_{\mathbb{R}^{2}} |u|^{3} \phi\left(\frac{\tilde x}{\lambda_{3}(t)} \right) \phi \left(\frac{\tilde y}{\lambda_{4}(t)}\right) \mathrm{d}x\mathrm{d}y\\
&~{} \qquad \quad + \frac{C_1}{t} \int_{\mathbb{R}^{2}}u^2 \phi\left(\frac{\tilde x}{\lambda_{3}(t)} \right) \phi \left(\frac{\tilde y}{\lambda_{4}(t)}\right)\mathrm{d}x\mathrm{d}y  + \mathcal J_{int}(t),
\end{aligned}
\]
where $\mathcal J_{int}(t)\in L^1(\{ t\gg1\})$. 

\medskip

Note that the last term above is integrable in time, thanks to Lemma \ref{le:dT}. Indeed, using the fact that from \eqref{def_eta2} one has
\[
\lambda_3(t) \ll \lambda_5(t), \quad \lambda_4(t) \ll \lambda_6(t),
\]
we obtain using  \eqref{dT_ppal},
\[
\begin{aligned}
&~{}\frac1{t} \int_{\mathbb{R}^{2}}u^2 \phi\left(\frac{\tilde x}{\lambda_{3}(t)} \right) \phi \left(\frac{\tilde y}{\lambda_{4}(t)}\right)\mathrm{d}x\mathrm{d}y  \\
&~{} \quad \lesssim \frac{1}{t}\int_{\mathbb{R}^{2}} u^{2} \phi\left(\frac{\tilde x}{\lambda_5(t)}\right)\phi\left(\frac{\tilde y}{\lambda_6(t)}\right)\,\mathrm{d}x\mathrm{d}y \in L^1(\{t\gg1\}).
\end{aligned}
\]
The final conclusion in \eqref{estimate_v} follows from the following result. Recall \eqref{derivada}.

\begin{lem}[Control of cubic terms]\label{cubico}
	Let $u\in E^1(\mathbb R^2)$ be   a  solution of the KP-I equation verifying \eqref{bounded1}, then 
\begin{equation}\label{estimate_u3}
\frac{1}{\eta_2(t)\lambda_{4}(t)}\int_{\mathbb{R}^{2}}|u|^{3}(x,y,t)\left(\partial_{y}\Psi\right)\left(\frac{\tilde x}{\lambda_{3}(t)},\frac{\tilde y}{\lambda_{4}(t)}\right)\mathrm{d}x\mathrm{d}y \in L^1(\{t\gg 1\}).
\end{equation}
\end{lem}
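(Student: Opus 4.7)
The plan is to estimate $\int|u|^3\Phi\,dx\,dy$, where $\Phi(t,x,y):=\phi(\tilde x/\lambda_3(t))\phi(\tilde y/\lambda_4(t))$ by virtue of \eqref{derivada}, via a spatial Cauchy--Schwarz that factors out a uniformly bounded quartic integral and leaves only the ``good'' localized $L^2$ mass $\int u^2\Phi$; the decisive $\log t$ gain in the prefactor $\eta_2(t)\lambda_4(t)=t\log t$ (see \eqref{producto}) is then harvested through a second Cauchy--Schwarz, this time in the time variable. Concretely, splitting $|u|^3\Phi=(|u|\Phi^{1/2})(u^2\Phi^{1/2})$ and using Cauchy--Schwarz in $(x,y)$ gives
\[
\int_{\mathbb{R}^2}|u|^3\Phi\,dx\,dy\le\Big(\int_{\mathbb{R}^2}u^4\Phi\,dx\,dy\Big)^{1/2}\Big(\int_{\mathbb{R}^2}u^2\Phi\,dx\,dy\Big)^{1/2}.
\]
Because $0\le\Phi\le 9$, the quartic factor is dominated by $\|u(t)\|_{L^4}^4$, and the KP-adapted interpolation \eqref{Interpol1} with $p=4$, combined with the uniform bound \eqref{bounded1}, yields $\|u(t)\|_{L^4}^4\le c\|u\|_{L^2}\|\partial_x u\|_{L^2}^2\|\partial_x^{-1}\partial_y u\|_{L^2}\le K$, a constant independent of $t$.

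Dividing by $\eta_2(t)\lambda_4(t)=t\log t$, the matter reduces to showing
\[
\int_T^\infty\frac{1}{t\log t}\Big(\int_{\mathbb{R}^2}u^2\Phi\,dx\,dy\Big)^{1/2}dt<\infty.
\]
Writing the integrand as $(t\log^2 t)^{-1/2}\cdot t^{-1/2}(\int u^2\Phi)^{1/2}$ and applying Cauchy--Schwarz in $t$ bounds the above by
\[
\Big(\int_T^\infty\frac{dt}{t\log^2 t}\Big)^{1/2}\Big(\int_T^\infty\frac{1}{t}\int_{\mathbb{R}^2}u^2\Phi\,dx\,dy\,dt\Big)^{1/2}.
\]
The first factor equals $(\log T)^{-1/2}$. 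For the second factor, since $\phi$ is even and nonincreasing on $[0,\infty)$ and $\lambda_3(t)\le\lambda_5(t)$, $\lambda_4(t)\le\lambda_6(t)$ by \eqref{comparacion_final}, we have the pointwise domination $\Phi(t,x,y)\le\phi(\tilde x/\lambda_5(t))\phi(\tilde y/\lambda_6(t))$, so Proposition \ref{le:dT} (integrated in time, using that $\mathcal{I}$ is bounded by Lemma \ref{bounded2d}) gives finiteness of the second factor.

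The key observation, and essentially the only non-routine point, is the logarithmic gain: $(t\log^2 t)^{-1}$ is integrable at infinity while the bare $(t\log t)^{-1}$ would not be, so the extra $\log t$ built into $\eta_2(t)\lambda_4(t)$ is precisely what enables the time Cauchy--Schwarz. Note that no regularity beyond $E^1(\mathbb{R}^2)$ is invoked, in accordance with the hypothesis of the lemma: only the interpolation \eqref{Interpol1} and Proposition \ref{le:dT}, both valid in $E^1(\mathbb{R}^2)$ (and even in $L^2(\mathbb{R}^2)$ for KP-II), enter the argument.
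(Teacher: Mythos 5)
Your proof is correct, but it takes a route that differs in its mechanics from the paper's. The paper splits $|u|^3$ \emph{additively} via Young's inequality into a $u^2$ piece with prefactor $\sim 1/t$ and a $u^6$ piece with prefactor $\sim 1/(t\log^4 t)$; the latter is killed pointwise in time by the endpoint $p=6$ of the interpolation inequality \eqref{Interpol1} together with \eqref{bounded1}, and the former by Proposition \ref{le:dT} after the same weight domination $\lambda_3\ll\lambda_5$, $\lambda_4\ll\lambda_6$ that you use. You instead split \emph{multiplicatively} by Cauchy--Schwarz in space, into $\left(\int u^4\Phi\right)^{1/2}$ (uniformly bounded via $p=4$ in \eqref{Interpol1}, safely in the interior of the admissible range) and $\left(\int u^2\Phi\right)^{1/2}$, and then you need a second Cauchy--Schwarz in the time variable to convert the square root of the localized mass into the quantity $\int \frac1t\int u^2\Phi$ controlled by Proposition \ref{le:dT}; the factor $\int_T^\infty \frac{dt}{t\log^2 t}<\infty$ is where the extra logarithm in $\eta_2(t)\lambda_4(t)=t\log t$ is spent, exactly as the paper spends it on the $1/(t\log^4 t)$ coefficient of the $u^6$ term. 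Both arguments rest on the same two inputs (\eqref{Interpol1}+\eqref{bounded1} and Proposition \ref{le:dT}) and both stay within $E^1(\mathbb R^2)$; the paper's version yields a pointwise-in-time decomposition of the integrand into manifestly $L^1$ pieces (which is what gets reused later in Section \ref{Final}), whereas yours only establishes integrability of the whole expression, which is all the lemma asserts.
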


\begin{proof}
First, we apply Young's inequality properly,  from  where  we obtain  
\begin{equation*}
	\begin{split}
		&\frac{1}{\eta_{2}(t)\lambda_{4}(t)}\int_{\mathbb{R}^{2}}|u|^{3}\left(\partial_{y}\Psi\right)\left(\frac{\tilde x}{\lambda_{3}(t)},\frac{\tilde y}{\lambda_{4}(t)}\right)\mathrm{d}x\mathrm{d}y\\
		&\lesssim \frac{1}{t^{4}\log^{3}t}\int_{\mathbb{R}^{2}}u^{2}\left(\partial_{y}\Psi\right)\left(\frac{\tilde x}{\lambda_{3}(t)},\frac{\tilde y}{\lambda_{4}(t)}\right)\mathrm{d}x\mathrm{d}y\\
		&\quad+\frac{1}{t^{4/3}\log^{5/3}t}\int_{\mathbb{R}^{2}}u^{6}\left(\partial_{y}\Psi\right)\left(\frac{\tilde x}{\lambda_{3}(t)},\frac{\tilde y}{\lambda_{4}(t)}\right)\mathrm{d}x\mathrm{d}y.
	\end{split}
\end{equation*}

Next,   combining  H\"{o}lder's inequality and \eqref{bounded1}, we  find that 
\begin{equation}\label{e1}
	\begin{split}
		&\frac{1}{\eta_{2}(t)\lambda_{4}(t)}\int_{\mathbb{R}^{2}}|u|^{3}\left(\partial_{y}\Psi\right)\left(\frac{\tilde x}{\lambda_{3}(t)},\frac{\tilde y}{\lambda_{4}(t)}\right)\mathrm{d}x\mathrm{d}y\\
		&\lesssim \frac{1}{t}\int_{\mathbb{R}^{2}}u^{2}\left(\partial_{y}\Psi\right)\left(\frac{\tilde x}{\lambda_{3}(t)},\frac{\tilde y}{\lambda_{4}(t)}\right)\mathrm{d}x\mathrm{d}y+\frac{1}{t\log^{4}t}\|u\|_{L^{6}_{xy}}^{6}\left\|(\partial_{y}\Psi)\left(\frac{\cdot}{\lambda_{3}(t)},\frac{\cdot}{\lambda_{4}(t)}\right)\right\|_{L^{\infty}_{xy}}\\
		&\lesssim \frac{1}{t}\int_{\mathbb{R}^{2}}u^{2}\left(\partial_{y}\Psi\right)\left(\frac{\tilde x}{\lambda_{3}(t)},\frac{\tilde y}{\lambda_{4}(t)}\right)\mathrm{d}x\mathrm{d}y+\frac{C}{t\log^{4}t}.
	\end{split}
\end{equation} 
We use now  \eqref{psi_phi}, \eqref{producto}, \eqref{def_phi2}, and the fact that from \eqref{def_eta2} we  have 
\begin{equation*}
\lambda_{3}(t) \ll \lambda_5(t), \quad \lambda_{4}(t) \ll \lambda_6(t).
\end{equation*}
Thus,  in virtue of    \eqref{dT_ppal} we obtain  
\begin{equation*}
\begin{split}
	\frac{1}{t} \int_{\mathbb{R}^{2}}u^2 \left(\partial_{y}\Psi\right)\left(\frac{\tilde x}{\lambda_{3}(t)},\frac{\tilde y}{\lambda_{4}(t)}\right)\mathrm{d}x\mathrm{d}y&\sim \frac{1}{t} \int_{\mathbb{R}^{2}}u^2 \phi\left(\frac{\tilde x}{\lambda_{3}(t)} \right) \phi \left(\frac{\tilde y}{\lambda_{4}(t)}\right)\mathrm{d}x\mathrm{d}y \\
	 &\lesssim \frac{1}{t}\int_{\mathbb{R}^{2}} u^{2} \phi\left(\frac{\tilde x}{\lambda_5(t)}\right)\phi\left(\frac{\tilde y}{\lambda_6(t)}\right)\mathrm{d}x\mathrm{d}y \in\! L^1(\{t\gg1\}).
\end{split}
\end{equation*}
Thus, coming  back to \eqref{e1} we  obtain \eqref{estimate_u3}.
\end{proof}
Now we conclude. By taking $b,r$ slightly smaller but fixed if necessary, and using \eqref{def_eta2}, we obtain
\begin{equation}\label{estimate_v_new}
\int_{\{t\gg1\}}\frac{1}{t\log t}\int_{\widetilde \Omega_1(t)} v^{2}\mathrm{d}x\mathrm{d}y\mathrm{d}t \lesssim \int_{\{t\gg1\}} \frac{1}{t\log t}\int_{\mathbb{R}^{2}} v^{2}\left(\partial_{y}\Psi\right)\left(\frac{\tilde x}{\lambda_{3}(t)},\frac{\tilde y}{\lambda_{4}(t)}
\right)\mathrm{d}x\mathrm{d}y\mathrm{d}t  <\infty.
\end{equation}
Here, $\widetilde \Omega_1(t)$ is the subset of the plane $\mathbb R^2$ introduced in \eqref{Omega1}. This estimate and \cite{MuPo2} readily implies \eqref{decay_v}.

\subsection{End of proof of Theorem \ref{thm_KPI}, part \eqref{decay_ux}}\label{finaliza} Recall Proposition \ref{paso1}. From \eqref{estimate_v_new} and $v=\partial_x^{-1}\partial_y u$,
\[
 \frac{1}{t\log t}\int_{\mathbb{R}^{2}} (\partial_x^{-1}\partial_y u)^2   \phi \left(\frac{\tilde x}{\lambda_1(t)} \right) \phi\left(\frac{\tilde y}{\lambda_2(t)}\right)   \, \mathrm{d}x\mathrm{d}y \in L^1(\{t\gg 1\}).
\]
Here we have used that $\lambda_1(t)\ll \lambda_3(t)$ and $\lambda_2(t)\ll \lambda_4(t)$, see \eqref{comparacion_final}. On the other hand, from \eqref{derivada}, \eqref{comparacion_final}, \eqref{producto} and  \eqref{estimate_u3}, 
\[
\begin{aligned}
&~{} \frac{1}{t\log t} \int_{\mathbb{R}^{2}} |u|^3 \phi \left(\frac{\tilde x}{\lambda_1(t)} \right) \phi\left(\frac{\tilde y}{\lambda_2(t)}\right) \, \mathrm{d}x\mathrm{d}y \\
&~{} \quad \lesssim \frac{1}{\eta_2(t)\lambda_{4}(t)}\int_{\mathbb{R}^{2}}|u|^{3}(x,y,t)\left(\partial_{y}\Psi\right)\left(\frac{\tilde x}{\lambda_{3}(t)},\frac{\tilde y}{\lambda_{4}(t)}\right)\mathrm{d}x\mathrm{d}y \in L^1(\{t\gg 1\}).
\end{aligned}
\]
We conclude
\[
\int_{\{t\gg1\}}\frac{1}{t\log t}\int_{\mathbb{R}^{2}} (\partial_x u)^2 \phi \left(\frac{\tilde x}{\lambda_1(t)} \right) \phi\left(\frac{\tilde y}{\lambda_2(t)}\right)   \, \mathrm{d}x\mathrm{d}y\mathrm{d}t <+\infty.
\]
Using \eqref{Omega2},
\[
\liminf_{t\to \infty}\int_{\widetilde \Omega_2(t)} (\partial_x u)^2  \, \mathrm{d}x\mathrm{d}y \leq  \liminf_{t\to \infty}\int_{\mathbb{R}^{2}} (\partial_x u)^2 \phi \left(\frac{\tilde x}{\lambda_1(t)} \right) \phi\left(\frac{\tilde y}{\lambda_2(t)}\right)   \, \mathrm{d}x\mathrm{d}y =0.
\]
This ends the proof.

\subsection{Proof of Corollary \ref{Non}} This corollary is a direct consequence of Proposition \ref{le:dT} and the fact that $u$ is periodic in time. Indeed, assume $u$ periodic in time nontrivial and let $T>0$ be the minimal period of $u$. Fix $\varepsilon >0$ and consider $R(\varepsilon)>0$ from Definition \ref{def_Non}. Thanks to \eqref{adentro} and the definition of $\Omega_1(t)$, one has for $b$ sufficiently small in \eqref{indices},
\[
\begin{aligned}
\frac{\sigma_2}{t}\int_{\|(x-x(t),y-y(t))\|\leq R(\varepsilon)} u^{2} (x,y,t)\,\mathrm{d}x\mathrm{d}y \leq &~{} \frac{\sigma_2}{t}\int_{\mathbb{R}^{2}} u^{2} \phi\left(\frac{\tilde x}{\lambda_5(t)}\right)\phi\left(\frac{\tilde y}{\lambda_6(t)}\right)\,\mathrm{d}x\mathrm{d}y \\
\leq &~{} \frac{d\mathcal I }{dt}(t) + \mathcal I _{int}(t),
\end{aligned}
\]
with $q=1+\varepsilon_0$ in $\mathcal I(t)$ \eqref{mI}, and where $\mathcal I _{int}(t)$ are terms that belong to $L^1\left(\{t\gg 1\}\right)$. The last inequalities imply
\[
\int_{t\gg 1} \frac{\sigma_2}{t} \int_{\|(x-x(t),y-y(t))\|\leq R(\varepsilon)} u^{2} (x,y,t)\,\mathrm{d}x\mathrm{d}y dt <+\infty.
\]
However, since $u$ is $T$-time periodic and nontrivial, for each $n\in\mathbb N$, one has
\[
\sup_{t\in [nT,(n+1)T]}\int_{\|(x-x(t),y-y(t))\|\leq R(\varepsilon)} u^{2} (x,y,t)\,\mathrm{d}x\mathrm{d}y \geq \eta_0>0,
\]
revealing that 
\[
\int_{t\gg 1} \frac{\sigma_2}{t} \int_{\|(x-x(t),y-y(t))\|\leq R(\varepsilon)} u^{2} (x,y,t)\,\mathrm{d}x\mathrm{d}y dt =+\infty,
\]
a contradiction. 

\begin{rem}\label{theend}
Although not stated in Corollary \ref{Non}, the periodicity of $u$ is just a sufficient condition for the nonexistence of compact solutions. The proof actually works in less restrictive settings, for instance provided there exists $\eta_0>0$ and an increasing sequence of times $T_n\to +\infty$ such that 
\[
\sup_{t\in [T_n,T_{n+1}]}\int_{\|(x-x(t),y-y(t))\|\leq R(\varepsilon)} u^{2} (x,y,t)\,\mathrm{d}x\mathrm{d}y \geq \frac{\eta_0}{\log T_n},
\]
as it is easily checked.
\end{rem}
%
%
%
%

\end{document}